\let\std@footnotetext\@footnotetext
\newcommand{\diag}{\operatorname{diag}}
\newcommand{\bo}[1]{{\bf#1}}\renewcommand{\geq}{\geqslant}
\newcommand{\R}{\textnormal{I\kern-0.21emR}}
\newcommand{\N}{\textnormal{I\kern-0.21emN}}
\renewcommand{\geq}{\geqslant}
\renewcommand{\leq}{\leqslant}
\def\e{{\varepsilon}}
\def\Yint#1{\mathchoice
    {\YYint\displaystyle\textstyle{#1}}%
    {\YYint\textstyle\scriptstyle{#1}}%
    {\YYint\scriptstyle\scriptscriptstyle{#1}}%
    {\YYint\scriptscriptstyle\scriptscriptstyle{#1}}%
      \!\iint}
\def\YYint#1#2#3{{\setbox0=\hbox{$#1{#2#3}{\iint}$}
    \vcenter{\hbox{$#2#3$}}\kern-.51\wd0}}
\def\longdash{{-}\mkern-3.5mu{-}} 
\def\tiltlongdash{\rotatebox[origin=c]{15}{$\longdash$}}
\def\fiint{\Yint\tiltlongdash}
\newtheorem*{theorem*}{Theorem}
\newtheorem{theorem}{Theorem}
\newtheorem{material}{material}
\newtheorem{proposition}[material]{Proposition}
\newtheorem{corollary}[material]{Corollary}
\newtheorem{definition}[material]{Definition}
\newtheorem{lemma}[material]{Lemma}
\newtheorem{remark}[material]{Remark}
\def\O{{\Omega}}
\def\n{{\nabla}}
\def\p{{\varphi}}
\def\TT{{(0,T)\times \T}}
\def\TTo{{(0,T)\times \mathbb{T}}}
\def\T{{\mathbb T^d}}
 \newcommandx{\christian}[2][1=]{\todo[linecolor=red,backgroundcolor=red!25,bordercolor=red,#1]{#2}}
 \newcommandx{\laura}[2][1=]{\todo[linecolor=blue,backgroundcolor=blue!25,bordercolor=blue,#1]{#2}}
 \newcommandx{\info}[2][1=]{\todo[linecolor=green,backgroundcolor=green!25,bordercolor=green,#1]{#2}}
 \newcommandx{\improvement}[2][1=]{\todo[linecolor=yellow,backgroundcolor=yellow!25,bordercolor=yellow,#1]{#2}}
  \newcommandx{\biblio}[2][1=]{\todo[linecolor=blue,backgroundcolor=magenta!25,bordercolor=blue,#1]{#2}}
 \numberwithin{equation}{section}
\begin{document}

\title{Optimisation of space-time periodic eigenvalues}

\author{Beniamin Bogosel}
\address{Beniamin Bogosel, CMAP, CNRS, \'Ecole polytechnique, Institut Polytechnique de Paris, 91120
Palaiseau, France.}
\email{beniamin.bogosel@polytechnique.edu}
\author{ Idriss Mazari-Fouquer}
\address{Idriss Mazari-Fouquer, CEREMADE, UMR CNRS 7534, Universit\'e Paris-Dauphine, Universit\'e PSL, Place du Mar\'echal De Lattre De Tassigny, 75775 Paris cedex 16, France.}
\email{mazari@ceremade.dauphine.fr}
\author{Gr\'egoire Nadin}
\address{Gr\'egoire Nadin, Institut Denis Poisson, Universit\'e d’Orl\'eans, Universit\'e de Tours, CNRS, Orl\'eans, France.}
\email{gregoire.nadin@cnrs.fr}
\begin{abstract}The goal of this paper is to provide a qualitative analysis of the optimisation of space-time periodic principal eigenvalues. Namely, considering a fixed time horizon $T$ and the $d$-dimensional torus  $\mathbb{T}^d$, let, for any  $m\in L^\infty((0,T)\times\T)$, $\lambda(m)$ be the principal eigenvalue of the operator $\partial_t-\Delta-m$ endowed with (time-space) periodic boundary conditions. The main question we set out to answer is the following: how to choose $m$ so as to minimise $\lambda(m)$? This question stems from  population dynamics. We prove that in several cases it is always beneficial to rearrange $m$ with respect to time in a symmetric way, which is the first comparison result for the rearrangement in time of parabolic equations. Furthermore, we investigate the validity (or lack thereof) of Talenti inequalities for the rearrangement in time of parabolic equations. The numerical simulations which illustrate our results were obtained by developing a framework within which it is possible to optimise criteria with respect to functions having a prescribed rearrangement (or distribution function).
\end{abstract}

\maketitle
\textbf{Keywords:}  Non-symmetric operator, Principal eigenvalue, Rearrangements, Talenti inequalities, Spectral optimisation.

\textbf{MSC Classification:} 35P05,	35Q93, 49M41,  65N35.

\textbf{Acknowledgements:} The authors were partially supported by the ANR Project: STOIQUES. I. Mazari-Fouquer was partially supported by a PSL Young Researcher Starting Grant. 

\section{Introduction \& main results}

\subsection{Setting and objectives}

\subsubsection{Scope of the paper} Let $\T$ be the $d$-dimensional torus and $T>0$ be a time horizon. The primary focus of this paper is the optimisation, with respect to a function $m$ in $L^\infty((0,T)\times \T)$ naturally seen as $\T$-periodic in space and $T$-periodic in time, of the principal parabolic eigenvalue $\lambda(m)$ associated with the following system:
 \begin{equation}\label{Eq:Main1}
 \begin{cases}
 \frac{\partial u_m}{\partial t}-\Delta u_m=\lambda(m)u_m+mu_m&\text{ in }\TT\,, 
 \\ u_m(T,\cdot)=u_m(0,\cdot)\,, 
 \\ u_m\geq 0\,, \neq 0\,, \fiint_\TT u_m^2=1.\end{cases}\end{equation} Here we use the notation $\fint_\T=\frac1{|\T|}\int_\T$. This eigenvalue problem is well-studied  \cite{zbMATH03897446,zbMATH01525790}; its well-posedness follows from the Krein-Rutman theorem when $m$ is H\"{o}lder continuous \cite[Chapter 2]{zbMATH07668634}, or from a Perron-Frobenius approach \cite{zbMATH05834183}.
The main question under consideration is the following: \emph{how to choose $m$ so as to minimise the eigenvalue $\lambda$?} In other words, we are interested in the problem 
\[ \min_m\lambda(m)\] where $m$ is constrained to belong to a certain admissible set. Naturally we would need to specify the precise constraints, but, at a qualitative level, there are two main questions. The first one is the \emph{symmetry} of optimisers: is it true that it always better to replace $m$ with another potential (that satisfies the same constraints) but that is also symmetric in time and in space? The second one has to do with the \emph{monotonicity} of the optimisers: provided the answer to the first question is positive, is it true that the optimiser is not only symmetric, but also monotone? Let us emphasise that these questions are answered positively when considering only the symmetry and monotonicity with respect to the space variable; this result is due to Nadin \cite{zbMATH05834183} and relies on a class of elliptic Talenti inequalities (we refer to section \ref{Se:Bib}). On the other hand, to the best of our knowledge, no results exist for the symmetry and monotonicity in time of optimisers. 

In this paper, we do not provide a complete answer to these questions, but we offer a detailed analysis of several cases. To be more specific:
\begin{enumerate}
\item We first prove a general symmetry result: all optimisers of the first eigenvalue are symmetric in time. This is Theorem \ref{Th:Symmetry1}.
\item Second, we investigate the question of monotonicity in a variety of regimes. It is important to note that here we restrict our investigation to the following case: we assume that $m(t,x)=c(t)V(x)$, where $V$ is a fixed potential, and $c$ is the function to be optimised. This case will be referred to as "multiplicative potentials".
\begin{enumerate}
\item A first core result deals with the case where the eigenvalue problem is set in all of $\R^d$ and $V$ is a quadratic function. In this case we prove that it is always better to replace $c$ with its symmetric decreasing rearrangement (see Definition \ref{De:Rearrangement}); this is Theorem \ref{Th:Gaussian}.
\item A second class of results deals with the problem in the torus. In that case we show that in a singular perturbation limit and in a large heat operator limit,  it is always better to replace $c$ with its symmetric decreasing rearrangement. These are Theorems \ref{Th:LargeDiffusivity}-\ref{Th:SmallDiffusivity}.
\item We then show that, contrary to what happens in the elliptic case, these results can not be linked to Talenti inequalities, which goes to highlight the deep difficulties posed by this endeavour. This is Theorem \ref{Th:SymmetryBreaking}.
\item Finally, we develop a numerical framework within which it is possible to optimise eigenvalues (or other criteria) with respect to functions having a fixed rearrangement -- this allows to go beyond the usual thresholding algorithm, and the numerical simulations we obtain back both our results and conjectures. We refer to section \ref{Se:Numerics}.
\end{enumerate}
\end{enumerate}

\subsubsection{State of the art}\label{Se:Bib}
The questions at hand are at the crossroads of multiple active research lines; we briefly summarise the relevant works below.
\paragraph{{\em Time-space periodic principal eigenvalues}}
The theory of time-space periodic principal eigenvalues is well established. To the best of our knowledge, the investigation of this spectral problem was first initiated by Beltramo \& Hess \cite{zbMATH03897446}, who used it to study bifurcation results for non-linear parabolic equations. Observe that  \cite{zbMATH03897446} uses the Krein-Rutman approach to establish the existence of $\lambda(m)$. The standard work here is the monograph of Hess \cite{zbMATH00049232}, which assumes some regularity on $m$. Furthermore, this periodic eigenvalue can be seen as the principal eigenvalue of a degenerate, non-symmetric elliptic operator. Indeed, introduce, for any $\e>0$, the operator $L_\e:=-\e\partial^2_{tt}-\Delta+\partial_t-m$. One can show using elliptic methods that $L_\e$ has a principal eigenvalue $\lambda_\e(m)$, and it is readily seen that one can write $\lambda(m)=\lim_{\e \to 0}\lambda_\e(m)$. With this interpretation in mind  it is important to note that Pinsky \cite{zbMATH00764019} studied the existence and uniqueness of a principal periodic eigenvalue $\lambda_\e(m)$  and, additionally,  investigated the sensitivity of $\lambda_\e(m)$ under perturbations of the coefficients of $L_\e$. In this contribution as well, the coefficient of $L$ (in particular, $m$) are assumed to have some H\"{o}lder continuity. The question of existence of $\lambda(m)$ when $m$ is merely assumed to be $L^\infty$ was later revisited by Godoy, Lami  \& Paczka \cite{zbMATH01060813} and Daners \cite{zbMATH01028210,zbMATH01525790} using, for \cite{zbMATH01060813}, a semi-group approach and, for \cite{zbMATH01525790}, an adaptation of the approach of Beltramo \& Hess \cite{zbMATH03897446}. Regarding applications to population dynamics, Cantrell \& Cosner\cite{CantrellCosner} and Berestycki, Hamel \& Roques \cite{zbMATH02228673} showed that the study of principal eigenvalues was crucial in understanding the invasion of a species in space periodic environments, especially so for characterising the invasion speed. Their approach was later shown by Nolen, Rudd \& Xin \cite{zbMATH05011372} and Nadin  \cite{NadinPTW} to hold in the case of time-space periodic equations, and these methods were adapted to accommodate the study of space-time periodic solutions of reaction-diffusion equations in bounded domains by Nadin \cite{zbMATH05787082}.  Hutson, Mischaikow \& Pol\'a\v cik \cite{zbMATH01773170} and Hutson, Shen \& Vickers \cite{zbMATH01579419} investigated the qualitative dynamics of space-time periodic reaction-diffusion equations by using the interpretation of $\lambda(m)$ as a criterion for extinction or survival of a species; we will go back to a discussion of some of their results in a subsequent paragraph. We refer to \cite[Chapter 2]{zbMATH07668634} for a self-contained introduction to the topic, and we conclude by mentioning that a related approach that recently garnered a lot of interest from the mathematical community is based on the study of the principal Floquet bundle. We refer to \cite[Chapter 5]{zbMATH07668634}.

\paragraph{{\em Optimisation of eigenvalues of (non-)symmetric operators}}
Regarding the main qualitative questions of this paper, it is important to review some key contributions in spectral optimisation of elliptic and parabolic operators. A lot of work has been devoted to the understanding of the following optimisation problem:
\begin{multline*} \text{ Minimise }\mu(D,m) \\\text{ with respect to }m\in M:=\left\{m\in L^\infty(\O)\,,-1\leq m\leq 1\,, \fint_\Omega m=m_0\right\}\end{multline*} where $\mu(D,m)$ is the first eigenvalue of the operator $-D\Delta-m$ endowed with Dirichlet, Neumann or Robin boundary conditions in a smooth bounded domain $\Omega$. Similar to $\lambda(m)$, $\mu(D,m)$ provides a natural survival or extinction criterion for various classes of reaction-diffusion equations, and minimising $\mu$ amounts to answering the following question: what is the best distribution of resources to ensure the optimal survival of a population? This question emerged in the works of Cantrell \& Cosner \cite{CantrellCosner} and subsequently attracted much attention from several points of view. The first one is the question of the geometry of optimisers, while the second one is the influence of the diffusion coefficient  $D$. As regards the first question, \cite{CantrellCosner} provided a first qualitative result: in the one-dimensional case, it is better, in the case of periodic boundary conditions, to place resources at the center of the interval. Later contributions by Berstycki, Hamel \& Roques \cite{zbMATH02194918} and Kao, Lou \& Yanagida  \cite{zbMATH05530397}  showed more general symmetry properties: in $\T$, the optimal resources distribution for $\mu(D,\cdot)$ is symmetric decreasing in every direction. There are several proofs available, the most flexible one relying on rearrangement inequalities. We refer to \cite{zbMATH02194918} for further details, as well as to \cite{zbMATH06690453} for more general symmetry properties. A key aspect of these contributions is that $\mu(D,m)$ admits a variational formulation; this is due to the symmetry of the operator of $-D\Delta-m$. A natural question was then to understand the properties of spectral optimisation problems for non-symmetric operators, typically of the first eigenvalue $\Lambda(X,m)$ of
\[ -\Delta+\langle X,\n \cdot\rangle-m,\] in which case the eigenvalue has to be optimised with respect to both the drift $X$ and the potential $m$.   This problem was analysed by Hamel, Nadirashvili \& Russ \cite{zbMATH02153800,zbMATH05960716} (see also \cite{Alvino1990}). Nevertheless, their techniques can not be applied to parabolic optimisation problem, as it involves modifying the drift term. In other words, $\Lambda(X,m)$ can be compared with $\Lambda(X',m^\#)$ where $m^\#$ and $X'$ are suitably symmetrised versions of $m$ and $X$ respectively. However, for parabolic problems (seen as degenerate elliptic problems), the drift is the vector field $X=(1,0,\dots,0)$, the modified vector field would be $X'=(\mathds 1_{[0,T/2]}-\mathds 1_{[T/2;2]},0,\dots,0)$ and the modified operator is no longer parabolic. Nadin \cite{zbMATH05610626} showed a spatial symmetry result for the optimisation of the time-periodic eigenvalue $\lambda(m)$ (see \eqref{Eq:Main1}) in the one-dimensional case: letting for any $f:=f(t,x)$, $f^*(t,\cdot)$ be the symmetric decreasing rearrangement of $f$ in $x$ (\emph{i.e.} the only symmetric non-increasing function with the same repartition function as $f(t,\cdot)$, see Definition \ref{De:Rearrangement}) then 
\[ \lambda(m^*)\leq \lambda(m).\] His proof relies on a combination of Talenti inequalities and of the Perron-Frobenius construction of $\lambda(m)$. His proof does not extend to symmetry in time results. 

Regarding the second question, that is, the monotonicity property of $\mu(D,m)$ with respect to the diffusivity $D$, this is settled immediately using the variational formulation of $\mu(D,m)$. In the case of non-symmetric elliptic and parabolic operators, the question is more subtle. For the sake of simplicity, let us consider the first eigenvalue $\lambda(\tau,A,D)$ of the operator $\tau\partial_t-D\Delta+A\langle X,\n\rangle-m$. The monotonicity with respect to $D$ is false in general, as established by Nadin \& Carr\`ere \cite{zbMATH07226744}. The monotonicity with respect to $\tau$ is correct when $X\equiv 0$; this result is due to Liu, Lou, Peng \& Zhou \cite{zbMATH07122717}.

 \subsection{Mathematical statement of the problem}
 Throughout this paper, we use the notations 
 \[ \fint_\T f=\frac1{|\T|}\int_\T f\,, \fint_0^Tf=\frac1T\int_0^Tf\,, \fiint_\TT f=\frac1{T\cdot|\T|}\iint_\TT f.\]
 
 We let $(u_m,\lambda(m))$ denote the unique eigenpair associated with \eqref{Eq:Main1}. The potential $m$, unless otherwise stated, belongs to the set of $T$-periodic in time, $\T$-periodic in space, bounded measurable functions. While we will investigate rearrangement inequalities for the map $m\mapsto \lambda(m)$ we will also study optimisation problems.  To properly state the admissible classes and questions we need to introduce basic notions of symmetric rearrangements.

\subsubsection{Symmetric rearrangement} 
We first recall the definition of symmetric rearrangement of functions: for $f\in L^\infty((0,T))$ there exists a unique  $f^\#\in L^\infty((0,T))$ with the following three properties:
\begin{enumerate}
\item $f^\#$ is symmetric with respect to $\frac{T}2$,
\item $f^\#$ is non-increasing in $[T/2,T]$, 
\item $f^\#$ has the same distribution function as $f$: for any $\xi\in \R$, $|\{f\geq \xi\}|=|\{f^\# \geq \xi\}|$.
\end{enumerate}
\begin{definition}[Symmetric decreasing rearrangement]\label{De:Rearrangement}
For a function $f:[0,T]\to \R$, the function $f^\#$ is called the symmetric decreasing rearrangement of $f$. When $f$ is $T$-periodic, $f^\#$ is also understood to be extended by $T$-periodicity.\end{definition} We refer for further details on rearrangements to \cite{zbMATH05042914}.
In the sequel, for a function  $m\in L^\infty((0,T)\times \T)$, we will denote by $m^\#$ the rearrangement of the function $t\mapsto m(t,x)$, with $x\in \TT$ given.

\subsubsection{Admissible classes and the optimisation problems}
We will consider two admissible classes. The first one is 
 \begin{equation}\label{Eq:Adm}\mathcal M:=\left\{m\in L^\infty((0,T)\times \T)\,, -\kappa\leq m\leq 1\text{ a.e., } \fiint_\TT m=m_0\right\},\end{equation}
where $\kappa\geq 0$, and $m_0\in (-\kappa,1)$. The first optimisation problem that will be of importance to us is 

\begin{equation}\label{Pv:1}
 \min_{m\in \mathcal M} \lambda(m).\end{equation} The existence of an optimiser is a consequence of the direct method in the calculus of variations. Regarding this problem we will show (Theorem \ref{Th:Symmetry1}) that any optimiser $m_{\mathrm{min}}$ is symmetric in time and in space. Nevertheless, observe that it is not clear that $m_{\mathrm{min}}^\#=m_{\mathrm{min}}$, and we thus devote some effort to understanding the following question: is it always true that $\lambda(m^\#)\leq \lambda(m)$? 

For this purpose, we need 
to introduce the following comparison relation between functions: for two $T$-periodic functions $c_1\,, c_2:[0,T]\to \R$, we say that $c_1$ is dominated by $c_2$ and we write 
\[ c_1\preceq c_2\] if, and only if,
\begin{equation}\label{Eq:Domination}\forall s\in [0,T]\,, \sup_{|E|=s} \int_E c_1\leq \sup_{|E|=s} \int_E c_2\text{ and }\int_0^T c_1=\int_0^T c_2.\end{equation}

As for any $s\in [0,T]$ and any $c\in L^\infty((0,T))$ we have 
\[\sup_{|E|=s}\int_E c=\int_{\frac{T-s}2}^{\frac{T+s}2} c^\#\] we have the following lemma:
\begin{lemma}
For any $c_1\,, c_2\in L^\infty((0,T))$, $c_1\preceq c_2$ if, and only if, 
\[\forall r\in [0,T]\,, \int_{\frac{T-r}2}^{\frac{T+r}2} c_1^\#\leq \int_{\frac{T-r}2}^{\frac{T+r}2} c_2^\#\] and 
\[ \int_0^T c_1^\#=\int_0^T c_2^\#.\]\end{lemma}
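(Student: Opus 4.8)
The plan is to derive the claimed equivalence directly from the key identity displayed just before the lemma, namely that for any $c \in L^\infty((0,T))$ and any $s \in [0,T]$,
\[
\sup_{|E| = s} \int_E c = \int_{\frac{T-s}{2}}^{\frac{T+s}{2}} c^\#.
\]
This is the standard "bathtub principle'' / layer-cake fact for the symmetric decreasing rearrangement: among all measurable sets $E$ of prescribed measure $s$, the integral $\int_E c$ is maximised by a superlevel set of $c$, and rearranging to $c^\#$ turns that optimal superlevel set into the centred interval $\left[\frac{T-s}{2}, \frac{T+s}{2}\right]$, which has measure $s$ and on which $c^\#$ coincides with its largest values. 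The identity is invariant under rearrangement in the sense that $c$ and $c^\#$ have the same distribution function, hence the same value of $\sup_{|E|=s}\int_E(\cdot)$; this is what makes the substitution legitimate.

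Given this, the proof is essentially a substitution. First I would apply the identity to $c_1$ and to $c_2$ with $s = r$, obtaining
\[
\sup_{|E| = r} \int_E c_i = \int_{\frac{T-r}{2}}^{\frac{T+r}{2}} c_i^\# \qquad (i = 1,2),
\]
so that the inequality $\sup_{|E|=r}\int_E c_1 \le \sup_{|E|=r}\int_E c_2$ in the definition \eqref{Eq:Domination} of $c_1 \preceq c_2$ is, term by term, exactly the inequality $\int_{\frac{T-r}{2}}^{\frac{T+r}{2}} c_1^\# \le \int_{\frac{T-r}{2}}^{\frac{T+r}{2}} c_2^\#$. Second, for the mass constraint, I would note that $\int_0^T c_i = \int_0^T c_i^\#$ since rearrangement preserves the distribution function and hence the integral over the whole interval (equivalently, take $s = T$ in the identity, where the only set of measure $T$ is $(0,T)$ itself up to null sets); therefore $\int_0^T c_1 = \int_0^T c_2$ is equivalent to $\int_0^T c_1^\# = \int_0^T c_2^\#$. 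Combining the two equivalences, ranging over all $r \in [0,T]$, gives the stated biconditional.

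Since both directions of each equivalence are literal rewrites using equalities, there is genuinely no obstacle here beyond correctly invoking the rearrangement identity; the lemma is a bookkeeping reformulation of \eqref{Eq:Domination} that replaces the awkward supremum over sets by a monotone family of interval integrals of the rearranged functions. I would present it in two or three lines: invoke the identity, substitute, and record that the mass conditions match. If one wants to be fully self-contained one could add a sentence recalling why $\int_0^T c = \int_0^T c^\#$ and why the displayed identity holds (bathtub principle), but given that the paper has already stated the identity immediately above, the cleanest exposition is simply to cite it and perform the substitution.
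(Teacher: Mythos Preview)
Your proposal is correct and matches the paper's approach exactly: the paper presents the lemma as an immediate consequence of the displayed identity $\sup_{|E|=s}\int_E c=\int_{(T-s)/2}^{(T+s)/2} c^\#$, with no further argument given. Your substitution and the observation about preservation of total mass under rearrangement are precisely what is implicitly being invoked.
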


We are now in position to introduce our second admissible class. Let $L^\infty_{\mathrm{per}}((0,T))$ be the set of $T$-periodic bounded measurable functions. Let $\bar{c}\in L^\infty_{\mathrm{per}}((0,T))$ be such that 
\[ \bar{c}=\bar{c}^\#.\] We define
\begin{equation}\label{Eq:Adm2}
\mathcal C:=\left\{ c\in L^\infty_{\mathrm{per}}((0,T))\,, c\preceq \bar c\right\}.\end{equation} 
\begin{remark}\label{Re:Classical}
An important case is $\bar c=a+(b-a)\mathds 1_{\left[\frac{T-\ell}2,\frac{T+\ell}2\right]}$ for $a<b$ two real numbers. In this case the class $\mathcal C$ can be described more explicitly as $\mathcal C=\{c:[0,T]\to \R\,, a\leq c\leq b\,, \int_0^T c=aT+\ell(b-a)\}$. This is not important for the mathematical analysis, but it can be in the numerical schemes. In section \ref{Se:Numerics}, we refer to this case as the ``classical" one, as this is the one most often considered in the literature. The core of section \ref{Se:Numerics} is to develop efficient methods when $\bar c$ is not a characteristic function.
\end{remark}

For a fixed potential $V=V(x)$, we let $\lambda(cV)$ be the first eigenvalue of \eqref{Eq:Main1} with $m(t,x)=c(t)V(x)$. When no confusion is possible,  $\lambda(c)$ denotes $\lambda(cV)$. The second problem under consideration is 
\begin{equation}\label{Pv:2}
\min_{c \in \mathcal C}\lambda(c).\end{equation} Here again, the existence of an optimal $c_{\mathrm{min}}$ is a consequence of the direct method of the calculus of variations. 
\begin{remark}\label{Re:Topological}
The topological properties of $\mathcal C$ are well-known: $\mathcal C$ is convex and compact for the weak $L^\infty-*$ topology; we refer to \cite{Alvino1989}. Furthermore, the extreme points of $\mathcal C$ are exactly the functions $c$ that satisfy $c^\#=\bar c$. This has an important consequence: as it is straightforward to see that $c\mapsto \lambda(c)$ is strictly concave, any solution $c_{\min}$ of \eqref{Pv:2} satisfies $c_{\min}^\#=\bar c$.
\end{remark}
While studying \eqref{Pv:2} we will also be led to proving in some regimes that 
\[ \lambda(c^\#)\leq \lambda(c).\]
It is important to note that a core result of the paper is the study of the Gaussian case, when $V$ is a quadratic potential and the equation is set in all of $\R^d$, where we can prove more precise results.

\subsection{Main results}
\subsubsection{Symmetry in time of optimal potentials for \eqref{Pv:1}}
We are interested in symmetry and monotonicity (in $t$ and $x$) of the solutions $m_{\mathrm{min}}$ of \eqref{Pv:1}. As mentioned earlier, the symmetry in $x$ of the optimiser can be obtained fairly easily using rearrangement arguments \cite[Theorem 3.9]{zbMATH05610626}. 

\begin{remark} As, for any $\tau>0$, $\lambda(m(\cdot+\tau,\cdot))=\lambda(m)$, so that symmetry in time has to be understood as: symmetry up to a translation.\end{remark}

 \begin{theorem}\label{Th:Symmetry1}
 Any solution $m_{\mathrm{min}}$  of \eqref{Pv:1} is symmetric in time: up to a translation, there holds, for any $t\in [0,T/2]$, for any $x\in \O$, $m_{\mathrm{min}}(t,x)=m_{\mathrm{min}}(T-t,x)$. Similarly, for a fixed $V$, any solution $c_{\mathrm{min}}$ of \eqref{Pv:2} is symmetric in time: up to a translation, there holds, for any $t\in [0,T/2]$, $c_{\mathrm{min}}(t)=c_{\mathrm{min}}(T-t)$.
 \end{theorem}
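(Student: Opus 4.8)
The plan is to prove the symmetry statement by a rearrangement/energy argument adapted to the parabolic (degenerate elliptic) setting. The key point is that although the operator $\partial_t - \Delta - m$ is non-symmetric, the principal eigenvalue admits a min-max (or rather a Rayleigh-type) characterization once one introduces the $\varepsilon$-regularized elliptic operator $L_\varepsilon = -\varepsilon \partial_{tt}^2 - \Delta + \partial_t - m$ mentioned in the introduction, whose principal eigenvalue $\lambda_\varepsilon(m)$ converges to $\lambda(m)$ as $\varepsilon \to 0$. I would first record a variational formula for $\lambda_\varepsilon(m)$ via a change of unknown that symmetrizes $L_\varepsilon$: writing $u = e^{t/(2\varepsilon)} v$ (or the appropriate conjugation killing the first-order term $\partial_t$), the operator becomes self-adjoint, and $\lambda_\varepsilon(m)$ is the bottom of a quadratic form involving $\varepsilon |\partial_t v|^2 + |\nabla v|^2 - m v^2$ against a $t$-dependent weight $e^{-t/\varepsilon}$. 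The difficulty is that this weight is not periodic, so the honest statement is slightly more delicate; the cleanest route is probably to keep the Perron–Frobenius/Krein–Rutman construction and use the adjoint eigenfunction.

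**Step 1 (variational inequality).** Let $m_{\mathrm{min}}$ be an optimiser of \eqref{Pv:1} and let $(u, \lambda)$ with $\lambda = \lambda(m_{\mathrm{min}})$ be its eigenpair, and let $u^*$ be the eigenfunction of the adjoint problem $-\partial_t u^* - \Delta u^* = \lambda u^* + m_{\mathrm{min}} u^*$ with the same eigenvalue, normalized so that $\fiint_\TT u u^* = 1$. Multiplying \eqref{Eq:Main1} by $u^*$ and integrating over $\TT$, the time-derivative and Laplacian terms give a symmetric bilinear expression, and one obtains a formula of the shape
\[
\lambda(m_{\mathrm{min}}) = \fiint_\TT \big( \nabla u \cdot \nabla u^* - m_{\mathrm{min}} u u^* \big),
\]
the $\partial_t$ contribution vanishing because $\iint_\TT (\partial_t u) u^* = -\iint_\TT u (\partial_t u^*)$ by periodicity in $t$. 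The plan is then to use this as a one-sided test: for a competitor potential $\tilde m$, $\lambda(\tilde m) \leq \fiint_\TT(\nabla w \cdot \nabla w^* - \tilde m\, w w^*)$ for suitable test functions — but since the problem is not a genuine Rayleigh quotient, I would instead invoke monotonicity of $m \mapsto \lambda(m)$ together with the Perron–Frobenius comparison principle: if $\tilde m \geq m_{\mathrm{min}}$ on a set and the eigenfunctions line up favourably, $\lambda$ decreases.

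**Step 2 (the symmetrization move in time).** Given the optimiser $m_{\mathrm{min}}$, form the "time-symmetrized" potential $\widehat m(t,x) = \tfrac12\big(m_{\mathrm{min}}(t,x) + m_{\mathrm{min}}(-t,x)\big)$ after translating so the relevant symmetry axis is at $t=0$ — or, more robustly, the Steiner-type symmetrization that averages $m_{\mathrm{min}}$ with its time-reflection $m_{\mathrm{min}}(T-t,x)$. This $\widehat m$ lies in $\mathcal M$ (the mean and the $L^\infty$ bounds are preserved by averaging). The key computation is that $\lambda(\widehat m) \leq \lambda(m_{\mathrm{min}})$: here one observes that the time-reversed eigenvalue problem for $m_{\mathrm{min}}(T-t,x)$ has the same eigenvalue $\lambda(m_{\mathrm{min}})$ (reverse time $t \mapsto T-t$ turns $\partial_t - \Delta - m_{\mathrm{min}}(t)$ into $-\partial_t - \Delta - m_{\mathrm{min}}(T-t)$, whose principal eigenvalue equals that of the adjoint, which equals $\lambda(m_{\mathrm{min}})$). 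Then a convexity/concavity argument — $m \mapsto \lambda(m)$ is concave, as noted in Remark \ref{Re:Topological} — gives
\[
\lambda(\widehat m) = \lambda\!\left(\tfrac{m_{\mathrm{min}}(t,x) + m_{\mathrm{min}}(T-t,x)}{2}\right) \geq \tfrac12 \lambda(m_{\mathrm{min}}(t,x)) + \tfrac12 \lambda(m_{\mathrm{min}}(T-t,x)) = \lambda(m_{\mathrm{min}}).
\]
Wait — concavity gives the inequality the wrong way for a minimization. So instead the argument must be: by optimality $\lambda(\widehat m) \geq \lambda(m_{\mathrm{min}})$ automatically, and concavity gives $\lambda(\widehat m) \geq \lambda(m_{\mathrm{min}})$ as well, with \emph{equality} forced; then strict concavity of $c \mapsto \lambda(c)$ (and a corresponding strict concavity in the $m$ variable along this particular affine segment, which must be checked) forces $m_{\mathrm{min}}(t,x) = m_{\mathrm{min}}(T-t,x)$ a.e. This is the heart of the proof.

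**The main obstacle** is Step 2's rigidity: I need \emph{strict} concavity of $\mu \mapsto \lambda\big((1-\mu) m_0 + \mu m_1\big)$ along the segment joining $m_{\mathrm{min}}$ and its time-reflection, in order to conclude $m_0 = m_1$ from the equality case — concavity alone only gives that $\lambda$ is affine on the segment. Remark \ref{Re:Topological} asserts strict concavity of $c \mapsto \lambda(c)$ for multiplicative potentials; the general $m$-statement needs the analogous fact, which follows from the Donsker–Varadhan / variational representation of $\lambda$ (as an infimum over positive test functions $\phi$ of $\sup_{\TT}\frac{-\partial_t\phi + \Delta\phi + m\phi}{\phi}$, or its dual form): strict concavity fails only if the two eigenfunctions coincide, which by the equation forces $m_0 = m_1$ anyway. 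I would need to spell this out carefully — identifying exactly when the concavity inequality is an equality and showing it implies pointwise equality of the potentials. The spatial symmetry part is then handled identically, or simply cited from \cite[Theorem 3.9]{zbMATH05610626}; and the statement for $c_{\mathrm{min}}$ in \eqref{Pv:2} is the special case $m = cV$, where the strict concavity in Remark \ref{Re:Topological} applies directly and the reflection $c \mapsto c(T - \cdot)$ stays in $\mathcal C$ because $\bar c = \bar c^\#$ is itself time-symmetric.
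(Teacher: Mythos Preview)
Your Step 2 contains a genuine gap that breaks the argument. You correctly observe that $\lambda(m_{\mathrm{min}}(T-\cdot,\cdot)) = \lambda(m_{\mathrm{min}})$ (via the adjoint), and you correctly form the average $\widehat m = \tfrac12\big(m_{\mathrm{min}} + m_{\mathrm{min}}(T-\cdot,\cdot)\big)\in\mathcal M$. But then both of your inequalities point the \emph{same} way: optimality of $m_{\mathrm{min}}$ gives $\lambda(\widehat m)\geq \lambda(m_{\mathrm{min}})$, and concavity of $m\mapsto\lambda(m)$ gives
\[
\lambda(\widehat m)\;\geq\;\tfrac12\lambda(m_{\mathrm{min}})+\tfrac12\lambda(m_{\mathrm{min}}(T-\cdot,\cdot))\;=\;\lambda(m_{\mathrm{min}}).
\]
Two inequalities $\geq$ do not force equality; nothing prevents $\lambda(\widehat m)>\lambda(m_{\mathrm{min}})$ strictly, in which case $\widehat m$ is simply not a minimiser and you learn nothing about $m_{\mathrm{min}}$. (The situation is exactly like minimising a strictly concave function on an interval: $-t^2$ on $[-1,1]$ has the two non-symmetric minimisers $\pm1$, and averaging them gives $0$, which is the maximiser.) For this style of argument to work you would need $\lambda$ to be \emph{convex} in $m$, which it is not.

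The paper's proof avoids concavity entirely. It first establishes a saddle-point variational formula (Proposition~\ref{Pr:Variational}) for $\lambda(m)$ in terms of a pair $(\alpha,\beta)$ built from the Heinze change of variables $\alpha=\sqrt{u_m v_m}$, $\beta=\tfrac12(\ln u_m-\ln v_m)$ (here $v_m$ is the adjoint eigenfunction). It then splits the time interval at $T/2$, compares the two halves' contributions $I_1,I_2$ to the Rayleigh-type quotient, and \emph{reflects the better half} (rather than averaging) to build a symmetric competitor $\tilde c_{\mathrm{min}}$ with $\lambda(\tilde c_{\mathrm{min}})\leq\min(I_1,I_2)\leq\lambda(c_{\mathrm{min}})$. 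Rigidity in the equality case is then extracted from uniqueness of the saddle point $(\alpha,\beta)$ and the Hopf--Cole relations, not from strict concavity. Your Step~1 (introducing the adjoint eigenfunction) is in fact the right first move, but the payoff is this min--max structure, not a bilinear Rayleigh identity.
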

 This theorem will be obtained as a corollary of the following variational formulation of the eigenvalue which in turn relies on a  Heinze-type formula \cite[Proposition 2.6]{zbMATH05834183}.
  \begin{proposition}\label{Pr:Variational}
 For any $m\in L^\infty((0,T)\times \O)$, the following holds:
 \begin{multline}\label{Eq:Variational} \lambda(m)=\min_{\alpha\in L^2(0,T;W^{1,2}(\O))\cap W^{1,2}(0,T;L^2(\O))\,, \iint_\TT \alpha^2=1 } \left(\iint_{(0,T)\times \O}  |\n_x\alpha|^2\right.\\\left.-\iint_\TT m\alpha^2-\min_{\beta \in L^2(0,T;W^{1,2}(\T)) }\iint_\TT \left(-\beta\partial_t\alpha^2+|\n_x\beta|^2\alpha^2\right)\right).\end{multline}
 \end{proposition}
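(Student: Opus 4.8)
The plan is to derive the variational formula \eqref{Eq:Variational} from the Heinze-type min-max representation of $\lambda(m)$ established in \cite[Proposition 2.6]{zbMATH05834183}. That reference expresses $\lambda(m)$ as a min-max over a positive test function and an auxiliary scalar field encoding the non-symmetric (drift/time-derivative) part of the operator; concretely, writing $\alpha$ for a normalised positive trial eigenfunction and $\beta$ for the auxiliary potential absorbing the antisymmetric part $\partial_t$, one has a representation of the shape
\[
\lambda(m) = \min_{\alpha}\ \max_{\beta}\ \mathcal{R}(\alpha,\beta),
\]
and the first step is to record that formula in the space-time periodic setting, checking that the relevant function spaces are exactly $L^2(0,T;W^{1,2}(\O))\cap W^{1,2}(0,T;L^2(\O))$ for $\alpha$ and $L^2(0,T;W^{1,2}(\T))$ for $\beta$. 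Since the statement we must prove writes the inner problem as a \emph{minimum} over $\beta$ (with an overall minus sign), I would first observe that $\iint_\TT(-\beta\,\partial_t\alpha^2 + |\n_x\beta|^2\alpha^2)$ is a convex (indeed quadratic, positive-definite modulo additive constants by periodicity) functional of $\beta$ for fixed $\alpha$, so the inner min is attained and the sign bookkeeping turns the Heinze max into the displayed $-\min_\beta$.

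The second step is to identify the inner minimiser. For fixed $\alpha$ with $\alpha>0$, the Euler–Lagrange equation of $\beta\mapsto \iint_\TT(-\beta\,\partial_t\alpha^2 + |\n_x\beta|^2\alpha^2)$ is the linear elliptic-in-$x$ equation $-\operatorname{div}_x(\alpha^2\n_x\beta) = \tfrac12\,\partial_t(\alpha^2)$ on $\T$, for a.e.\ $t$, which is solvable because the right-hand side has zero spatial average (integrate $\partial_t\alpha^2$ over $\T$) and $\alpha^2$ is bounded below on the region where it matters; the solution is unique up to an additive function of $t$, which does not affect the functional. Substituting this $\beta=\beta_\alpha$ back yields a functional of $\alpha$ alone, and one checks that its Euler–Lagrange system, together with the normalisation $\iint_\TT\alpha^2=1$, reproduces \eqref{Eq:Main1}: the computation is the standard completion-of-the-square that shows the quadratic form $\iint|\n_x\alpha|^2 - \iint m\alpha^2 - \min_\beta(\cdots)$ equals $\lambda(m)$ on the eigenfunction $u_m$ (take $\alpha=u_m$ and $\beta$ solving the above with $\alpha=u_m$, using $\partial_t u_m - \Delta u_m = (\lambda(m)+m)u_m$). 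This gives the upper bound $\lambda(m)\le$ the right-hand side of \eqref{Eq:Variational} by exhibiting the competitor, and the matching lower bound follows from the Heinze formula of \cite{zbMATH05834183} via the sign rearrangement in Step 1.

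The main obstacle I anticipate is the regularity/positivity bookkeeping needed to make the inner minimisation rigorous: the natural test class for $\alpha$ only gives $\alpha\in L^2_t W^{1,2}_x\cap W^{1,2}_t L^2_x$, so $\alpha$ need not be bounded away from $0$ nor continuous, and then $\alpha^2\n_x\beta$ and $\partial_t(\alpha^2)$ must be interpreted carefully (e.g.\ $\partial_t(\alpha^2)=2\alpha\,\partial_t\alpha$ as an $L^1$ function in space-time, by the chain rule in $W^{1,2}_tL^2_x$), and the weighted elliptic problem for $\beta$ is degenerate where $\alpha$ vanishes. I would handle this by first proving the formula for smooth $m$ (where $u_m$ is smooth and strictly positive by the parabolic strong maximum principle, so all manipulations are classical), and then passing to general $m\in L^\infty$ by approximation: $\lambda(m)$ is continuous under $L^\infty$-weak-$*$ or strong $L^p$ perturbations of $m$ (cf.\ the continuity statements invoked for the direct method elsewhere in the paper), and the right-hand side of \eqref{Eq:Variational} is, for fixed admissible $(\alpha,\beta)$, continuous in $m$, giving the inequality in one direction after passing to the limit, while lower semicontinuity of the min-max handles the other. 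A secondary technical point is justifying the interchange that lets us write $\min_\alpha\min_\beta$ rather than $\min_\alpha\max_\beta$ — this is exactly the convexity observation of Step 1 plus the fact that the outer dependence on $\alpha$ is unaffected — and confirming attainment of the outer minimum, which follows from the compactness of the normalised trial class in the relevant weak topology together with weak lower semicontinuity of the (convex in $\n_x\alpha$) functional.
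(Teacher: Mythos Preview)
Your proposal has a genuine gap at the step where you exhibit the competitor. You claim that taking $\alpha=u_m$ (together with the inner-optimal $\beta$) yields the value $\lambda(m)$, but this is false in general. Indeed, multiplying the eigenvalue equation by $u_m$ and integrating gives $\iint_\TT|\n_x u_m|^2-\iint_\TT m u_m^2=\lambda(m)$ (the $\partial_t$-term vanishes by periodicity), while the inner minimum satisfies
\[
\min_{\beta}\iint_\TT\bigl(-\beta\,\partial_t u_m^2+|\n_x\beta|^2 u_m^2\bigr)=-\iint_\TT u_m^2|\n_x\beta^*|^2\leq 0,
\]
so the full functional at $\alpha=u_m$ equals $\lambda(m)+\iint_\TT u_m^2|\n_x\beta^*|^2$, which is \emph{strictly larger} than $\lambda(m)$ whenever $u_m$ is genuinely time-dependent. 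Thus $u_m$ is not the minimiser, and your competitor argument does not produce the required bound. (Incidentally, the direction of the inequality in your write-up is also reversed: exhibiting a competitor in an outer minimisation gives an upper bound on the right-hand side, not on $\lambda(m)$.)

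The missing idea is that the correct competitor involves the \emph{adjoint} eigenfunction $v_m$ (solving $-\partial_t v_m-\Delta v_m=(\lambda(m)+m)v_m$). The paper sets
\[
\alpha_m=\sqrt{u_m v_m},\qquad \beta_m=\tfrac12\bigl(\ln u_m-\ln v_m\bigr),
\]
and shows, via the Hopf--Cole transforms of $u_m$ and $v_m$, that $(\alpha_m,\beta_m)$ satisfies $-\tfrac12\partial_t\alpha_m^2-\n\cdot(\alpha_m^2\n\beta_m)=0$ (so $\beta_m$ is the inner minimiser for $\alpha_m$) and $-\Delta\alpha_m=(\lambda(m)+m+\partial_t\beta_m+|\n\beta_m|^2)\alpha_m$ (so $\alpha_m$ is the outer minimiser). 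This symmetrisation between the direct and adjoint problems is precisely what makes the value equal to $\lambda(m)$; it is the core of the Heinze change of variables, and your proposal does not invoke it. For the converse direction the paper also argues directly: given any saddle point $(\tilde\alpha,\tilde\beta)$, one reconstructs a positive periodic solution of \eqref{Eq:Main1} as $e^{\ln\tilde\alpha-\tilde\beta}$ and concludes by uniqueness, rather than appealing to \cite{zbMATH05834183} as a black box.
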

 
 This proposition, coupled with an elementary concavity property shows that the solution $m_{\mathrm{min}}$ of \eqref{Pv:1} satisfies the so-called bang-bang property or, in other words, that there exists $E\subset \TT$ such that 
 \[ m_{\mathrm{min}}(t,x)=(1+\kappa)\mathds 1_E(t,x)-\kappa,\]and the set $E$ is symmetric in time.

\subsubsection{The influence of time rearrangement for multiplicative potentials: Gaussian potentials}
In this section we investigate in more details the question of the influence of time rearrangements: is it always beneficial to replace $c$ with $c^\#$? We first study the Gaussian case, before investigating several limiting regimes in the initial framework of $(0,T)\times \T$ periodic functions.

We first study a related problem set in all of $\R^d$, which we dub the Gaussian case, as eigenfunctions are always of Gaussian type.

\begin{theorem}\label{Th:Gaussian}
Let $A\in S_d^{++}(\R)$ be a symmetric positive definite matrix and $c\in L^\infty_{\mathrm{per}}((0,T))$. Let $\bar\lambda(c)\,, \bar c\geq 0$ be the first eigenvalue associated with the problem 
\begin{equation}\label{Eq:EigenGaussian}
\begin{cases}
\partial_t w_0-\Delta w_0=\overline\lambda(c)w_0-\frac12c(t)\langle x,Ax\rangle w_0&\text{ in }[0,T]\times \R^d\,, 
\\ w_0(T,\cdot)=w_0(0,\cdot)\,, 
\\ w_0\neq 0
\\ 0\leq w_0\leq w_0(0,0)=1.
\end{cases}\end{equation} Then there holds \[\overline\lambda(c)\geq \overline\lambda(c^\#),\] with equality if and only if $c=c^\#$ up to translation. 
\end{theorem}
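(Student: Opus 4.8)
The plan is to exploit the special structure of the Gaussian problem: when $V(x) = -\frac12\langle x, Ax\rangle$, one expects the eigenfunction $w_0(t,x)$ to be an explicit Gaussian in $x$, namely $w_0(t,x) = \exp\bigl(-\frac12\langle x, Q(t) x\rangle\bigr)$ for a time-periodic, symmetric-positive-definite matrix-valued function $Q(t)$, with $\overline\lambda(c)$ recovered from a scalar quantity built out of $Q$. First I would substitute this ansatz into \eqref{Eq:EigenGaussian}. Computing $\partial_t w_0$, $\nabla_x w_0 = -Q(t)x\, w_0$ and $\Delta w_0 = \bigl(\langle x, Q^2 x\rangle - \operatorname{Tr} Q\bigr) w_0$, and matching the quadratic-in-$x$ terms and the constant-in-$x$ terms separately, one is led to a matrix Riccati-type ODE
\[
-\tfrac12\langle x, \dot Q x\rangle = -\langle x, Q^2 x\rangle + \operatorname{Tr} Q + \overline\lambda(c) - \tfrac12 c(t)\langle x, A x\rangle,
\]
so that (symmetrising) $\dot Q = 2Q^2 - c(t) A$ together with the scalar relation $\overline\lambda(c) = -\operatorname{Tr} Q(t) + (\text{something constant in }t)$ — here one must be careful, since $\operatorname{Tr} Q$ need not be constant, so really the scalar part of the equation after accounting for the normalisation $w_0(t,0)=1$ forces a compatibility condition. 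The cleaner route is: once $Q(t)$ solves the periodic Riccati equation, the remaining (constant-in-$x$) balance gives $\overline\lambda(c) = \fint_0^T \operatorname{Tr} Q(t)\, dt$ after absorbing $w_0(t,0)$ into the normalisation (i.e. allowing $w_0(t,0)$ to be a time-dependent prefactor $e^{\phi(t)}$ with $\phi$ $T$-periodic, $\dot\phi = \operatorname{Tr} Q - \overline\lambda$, which is solvable with $T$-periodic $\phi$ exactly when $\overline\lambda = \fint_0^T \operatorname{Tr} Q$). So the whole problem reduces to: show that the $T$-periodic solution $Q$ of $\dot Q = 2Q^2 - c(t) A$ has smaller time-averaged trace when $c$ is replaced by $c^\#$.

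Next I would reduce to the scalar case. Since $A \in S_d^{++}$, diagonalise $A = P^\top \operatorname{diag}(a_1,\dots,a_d) P$; the ansatz $Q(t) = P^\top \operatorname{diag}(q_1(t),\dots,q_d(t)) P$ decouples the Riccati system into $d$ independent scalar Riccati equations $\dot q_i = 2 q_i^2 - a_i c(t)$, and $\operatorname{Tr} Q = \sum_i q_i$. Hence it suffices to prove, for each fixed $a>0$, that the unique $T$-periodic positive solution $q = q_{c,a}$ of
\[
\dot q = 2q^2 - a\, c(t)
\]
satisfies $\fint_0^T q_{c^\#,a} \le \fint_0^T q_{c,a}$, with equality iff $c = c^\#$ up to translation. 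The existence and uniqueness of a periodic positive solution of this Riccati equation — and its identification with $-\frac12$ times the logarithmic derivative of the periodic principal eigenfunction of the scalar operator $\partial_t - \partial_{yy} - a c(t) y^2$ on the line — is standard Hill's-equation / Floquet theory, which I would quote. The key comparison is then a one-dimensional statement about how the mean of the periodic Riccati solution depends on the forcing $c$.

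For that last step I would argue via the variational characterisation already available from Proposition \ref{Pr:Variational} specialised to this setting, or, more self-containedly, via a direct rearrangement argument on the integral functional $c \mapsto \fint_0^T q_{c,a}$. The cleanest approach: express $\overline\lambda(c)$ (equivalently $\fint q_{c,a}$) through the scalar variational formula
\[
\overline\lambda(c) = \min_{\alpha}\ \Bigl( \fiint |\partial_y \alpha|^2 + \tfrac12 \fiint c(t)\langle x, A x\rangle \alpha^2 - \min_\beta \fiint(-\beta\partial_t\alpha^2 + |\partial_y\beta|^2\alpha^2)\Bigr),
\]
and then use that for a \emph{fixed} admissible $\alpha$ (suitably normalised and, after a symmetrisation step, chosen even and monotone in $t$), the term $\fiint c(t)(\cdots)\alpha^2$ is, by the Hardy–Littlewood inequality applied in the $t$ variable, minimised when $c$ is replaced by $c^\#$ and the test function is correspondingly rearranged; the Dirichlet and $\partial_t$ terms must be controlled under this $t$-rearrangement of $\alpha$, which is where Pólya–Szegő-type inequalities for rearrangement in time are needed. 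I expect \textbf{this last step to be the main obstacle}: rearrangement in the time variable does not in general decrease the parabolic ``$\partial_t$'' contribution (indeed Theorem \ref{Th:SymmetryBreaking} warns that the naive Talenti route fails), so the Gaussian structure must be used essentially — presumably by exploiting that in the Gaussian case the optimal $\beta$ can be computed explicitly (it too is quadratic in $x$) so that the inner minimisation over $\beta$ collapses to an explicit functional of $c$ and of the Gaussian profile, reducing everything to a genuinely one-dimensional convexity/rearrangement inequality in $t$ where the equality case $c = c^\#$ (up to translation) can be read off from the equality case of Hardy–Littlewood.
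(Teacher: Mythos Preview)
Your reduction is correct and matches the paper exactly through the diagonalisation step: the Gaussian ansatz, the matrix Riccati equation $\frac12 Q' + Q^2 = \frac{c}{2}A$, the decoupling into scalar equations $\frac12 \xi_i' + \xi_i^2 = \frac{\mu_i}{2} c$, and the identity $\overline\lambda(c) = \sum_i \fint_0^T \xi_i$ are all as in Lemma~\ref{Le:Construction}. The task is indeed to show $\fint_0^T \xi_{i,c} \ge \fint_0^T \xi_{i,c^\#}$ for each $i$.

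The gap is in your proposed attack on this last inequality. Neither route you sketch is the one that works. Using Proposition~\ref{Pr:Variational} directly would require controlling the $\partial_t$ term under rearrangement in $t$, and you correctly flag this as the obstacle; the Gaussian structure does let you compute the optimal $\beta$ explicitly, but this only brings you back to the scalar Riccati comparison you are trying to prove, so nothing is gained. The ``direct rearrangement on $c\mapsto \fint q_{c,a}$'' is not fleshed out, and there is no obvious Hardy--Littlewood or P\'olya--Szeg\H{o} inequality that applies to this nonlinear, nonlocal functional of $c$.

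The paper's key idea, which you are missing, is to convert the scalar Riccati problem into a \emph{second-order ODE eigenvalue problem in the $t$ variable}. Writing $y(t)=\exp\bigl(\int_0^t(\gamma-\langle\gamma\rangle)\bigr)$ for a periodic solution $\gamma$ of $\gamma'+\gamma^2=f$, one finds that $y>0$ is $T$-periodic and solves
\[
-y'' - 2\alpha y' - \alpha^2 y + f y = 0,\qquad \alpha = \langle\gamma\rangle_{[0,T]}.
\]
Thus if $\theta_f(\alpha)$ denotes the principal periodic eigenvalue of $-\frac{d^2}{dt^2}-2\alpha\frac{d}{dt}-\alpha^2+f$, then $\langle\gamma\rangle$ is exactly the unique positive root of $\alpha\mapsto\theta_f(\alpha)$ (one checks $\partial_\alpha\theta_f=-2\alpha$, so $\theta_f$ is strictly decreasing on $(0,\infty)$). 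The comparison $\langle\gamma_f\rangle\ge\langle\gamma_{f^\#}\rangle$ then follows from the inequality $\theta_f(\alpha)>\theta_{f^\#}(\alpha)$ for all $\alpha>0$, which is precisely Nadin's rearrangement result \cite[Theorem~1.2]{zbMATH05834183} for one-dimensional periodic eigenvalues with drift. The point is that once $t$ has become the variable of an \emph{elliptic} (second-order) problem, the standard space-rearrangement machinery applies; this is how the Gaussian case sidesteps the obstruction of Theorem~\ref{Th:SymmetryBreaking}.
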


We could not locate the proof of existence of a solution to \eqref{Eq:EigenGaussian} in the literature; we provide it. Based on this case, we conjecture that optimisers should always be monotone decreasing but we are not able to prove it at the moment. Note however that this result provides some justification for our conjecture that in general rearranging $c$ should be beneficial for the eigenvalue. Another intuitive justification behind this conjecture is to be found in the monotonicity in frequency of $\lambda$. To be more precise, consider a profile $c$ that is not constant in time and, for any $k\in \N$, let $c_k(t):=c(kt)$. A theorem of Liu, Lou, Peng \& Zhou \cite{zbMATH07122717} asserts that $(\lambda(c_k V))_{k\in \N}$ is an increasing sequence for $V\in L^\infty (\TT)$. Consequently, oscillations are detrimental to spectral optimisation, whence a natural conjecture is that the optimiser should be ``as monotonic" as possible. Nevertheless, while our results lends credence to the link between monotonicity in frequency and monotonicity of optimisers in this case, we will see that, unlike monotonicity in space, monotonicity in time can not a priori be related to an isoperimetric type-property, see Theorem \ref{Th:SymmetryBreaking}.

\subsubsection{Monotonicity in time for two scaling limits of the equation}
 We are able to show that $c_{\mathrm{min}}$, the solution of \eqref{Pv:2} is monotonic in time or, put otherwise, that $c_{\mathrm{min}}=c_{\mathrm{min}}^\#$, in two scaling limits of the equation.
 
 \textbf{First scaling limit: large heat operator} We first replace \eqref{Eq:Main1} with 
 \begin{equation}\label{Eq:Main2}
 \begin{cases}
 \mu\partial_t u_{c,\mu}-\mu \Delta u_{c,\mu}=cVu_{c,\mu}+\lambda(c,\mu)u_{c,\mu}&\text{ in }\TT\,, 
 \\ u_{c,\mu}(0,\cdot)=u_{c,\mu}(T,\cdot)\,, 
 \\ u_{c,\mu}\geq 0\,, \neq 0,\end{cases}\end{equation}
 and we are interested in 
 \begin{equation}\label{Eq:PvLargeDiffusivity}
 \min_{c\in \mathcal C}\lambda(c,\mu).\end{equation}
 \begin{theorem}\label{Th:LargeDiffusivity}
Let $V\in L^\infty(\T)$ be fixed  and, for any $\mu>0$, $c_{\mathrm{min}}^\mu$ be a solution of \eqref{Eq:PvLargeDiffusivity}.  Assume that $V$ is not constant. Then there exists $\mu_0>0$ such that, for any $\mu\geq \mu_0$, up to a translation:
\[ c_{\mathrm{min}}^\mu=(c_{\mathrm{min}}^\mu)^\#=\bar c.\]
\end{theorem}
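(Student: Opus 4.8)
\textbf{Proof proposal for Theorem \ref{Th:LargeDiffusivity}.}

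The plan is to pass to the limit $\mu\to\infty$ in \eqref{Eq:Main2} and identify the limiting optimisation problem explicitly. Writing $\lambda(c,\mu)$ and dividing the equation by $\mu$, one sees that $u_{c,\mu}$ solves $\partial_t u_{c,\mu}-\Delta u_{c,\mu}=\mu^{-1}(cV+\lambda(c,\mu))u_{c,\mu}$; as $\mu\to\infty$ the right-hand side is an $O(\mu^{-1})$ perturbation of the heat operator with periodic boundary conditions, whose principal eigenfunction is constant. A first step is therefore to establish the expansion $\lambda(c,\mu)=-\fiint_\TT cV+\mu^{-1}\lambda_1(c)+o(\mu^{-1})$ uniformly in $c\in\mathcal C$, where the leading term comes from testing against the constant and the corrector $\lambda_1(c)$ is computed from the standard first-order perturbation formula for simple eigenvalues (the periodic-parabolic operator having a spectral gap, uniform in $c$ since $\mathcal C$ is bounded in $L^\infty$). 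Concretely, writing $u_{c,\mu}=1+\mu^{-1}\varphi_c+\dots$ one gets $\partial_t\varphi_c-\Delta\varphi_c=c(t)V(x)-\fiint cV$ and then $\lambda_1(c)=-\fiint_\TT (cV)\varphi_c$ after normalising; this $\varphi_c$ splits as $\varphi_c(t,x)=\psi_c(t)+\phi(x)\cdot(\text{something in }c)$ because $c$ depends only on $t$ and $V$ only on $x$, which is what makes the correction term tractable.

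The second step is to evaluate $\min_{c\in\mathcal C}\lambda_1(c)$. Since at order $\mu^0$ the functional $c\mapsto -\fiint cV$ is \emph{linear} in $c$ and (being an average over the full cylinder) invariant under time-rearrangement, it does not by itself select a symmetric profile; the selection happens at order $\mu^{-1}$. Solving the corrector equation by separation of the time and space parts, one should obtain $\lambda_1(c)=-\fint_0^T c(t)\,G\!\big[c\big](t)\,dt\cdot(\text{const depending on }V)$ where $G$ is the (positive, symmetric, convolution-type) Green operator for $-\frac{d^2}{dt^2}$ — or more precisely for $\partial_t$ against the relevant spatial mode — on the periodic interval, so that $\lambda_1(c)=-\langle c,Kc\rangle_{L^2(0,T)}$ up to an additive constant and a positive multiplicative constant $\propto \operatorname{Var}(V)$, with $K$ a positive definite periodic convolution kernel. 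Minimising $\lambda(c,\mu)$ for large $\mu$ thus amounts to \emph{maximising} the quadratic form $\langle c,Kc\rangle$ over $\mathcal C$; and here a rearrangement inequality of Riesz type on the circle — $\langle c,Kc\rangle\le\langle c^\#,Kc^\#\rangle$ when the kernel of $K$ is itself symmetric decreasing — combined with the monotonicity of the quadratic form with respect to the order $\preceq$ on $\mathcal C$ (Remark \ref{Re:Topological} and the description of extreme points), forces the maximiser to be the unique extreme point $\bar c$. Since $V$ non-constant makes $\operatorname{Var}(V)>0$, the selecting term is genuinely present.

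The main obstacle is making all of this \emph{uniform in $c\in\mathcal C$} and turning a statement about the limiting functional into one about $c_{\mathrm{min}}^\mu$ for $\mu$ large but finite: the limit functional $\lambda_1$ has a \emph{strict} maximiser $\bar c$ on the compact convex set $\mathcal C$ (strict by the equality case of the Riesz-type inequality, which gives $\langle c,Kc\rangle=\langle c^\#,Kc^\#\rangle$ only for translates of $c^\#$, together with strict concavity of $c\mapsto\lambda(c,\mu)$ from Remark \ref{Re:Topological}), so a standard $\Gamma$-convergence / stability argument — if $\lambda(\cdot,\mu)\to\lambda_0+\mu^{-1}\lambda_1$ uniformly with uniform control of the remainder on $\mathcal C$, and $\lambda_1$ has $\bar c$ as its unique maximiser on the set where $\lambda_0$ is extremised (which is all of $\mathcal C$), then $c_{\mathrm{min}}^\mu\to\bar c$, and since the extreme points are exactly $\{c:c^\#=\bar c\}$ and $c_{\mathrm{min}}^\mu$ is always an extreme point, convergence to $\bar c$ forces equality $c_{\mathrm{min}}^\mu=\bar c$ for $\mu\ge\mu_0$. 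The delicate points are (i) the uniform spectral gap for the periodic-parabolic problem needed to justify the expansion uniformly over the bounded set $\mathcal C$, and (ii) verifying that the effective kernel $K$ is indeed symmetric-decreasing on the periodic interval so that the Riesz rearrangement inequality applies with the sharp equality case; I expect (ii) to follow from writing $K$ as a Fourier series in $t$ with positive, decreasing-in-frequency coefficients (which is exactly where the Liu--Lou--Peng--Zhou monotonicity-in-frequency heuristic mentioned after Theorem \ref{Th:Gaussian} enters), but checking the equality case carefully is where the real work lies.
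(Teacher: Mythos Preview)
Your two-step strategy (asymptotic expansion in $1/\mu$, then optimisation of the first corrector) is exactly the paper's, but the execution differs in both steps and your closing argument contains a genuine gap.

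For the expansion, the paper does \emph{not} rely on a spectral-gap perturbation argument: the operator is non-self-adjoint and the relevant perturbation theory is not standard. Instead the paper uses the Hopf--Cole transform $u_\mu=e^{-\varphi_\mu}$ together with the Donsker--Varadhan saddle-point characterisation of $\lambda(m,\mu)$ to obtain the \emph{multiplicative} error bound
\[
\Bigl|\lambda(m,\mu)+\fiint_\TT m-\tfrac{\Lambda(m)}{\mu}\Bigr|\le \tfrac{M\,\Lambda(m)}{\mu^{3/2}},
\]
with $M$ uniform over $\mathcal C$; the multiplicative structure (error proportional to $\Lambda$) is what couples the remainder to the limit functional. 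Your $o(\mu^{-1})$ with ``uniform spectral gap'' does not obviously produce this.

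For the limit problem, the paper does not invoke a Riesz inequality on a convolution kernel. It Fourier-decomposes $V=\sum_k a_k\psi_k$, so that $\Lambda(c)=\sum_{k\ge1}\lambda_k a_k^2\Lambda_k(c)$ with $\Lambda_k(c)=\fint_0^T\alpha_{k,c}^2$ and $\alpha_{k,c}'+\lambda_k\alpha_{k,c}=c$; in particular there is \emph{no} single kernel times a constant $\propto\operatorname{Var}(V)$ as you write---each spatial mode contributes its own kernel. The key trick is to pass from the first-order ODE to the elliptic one: setting $-\beta_{k,c}''+\lambda_k^2\beta_{k,c}=c$ gives $\Lambda_k(c)=\fint_0^T c\,\beta_{k,c}=-2E_k(\beta_{k,c},c)$ for a genuine Dirichlet energy $E_k$, to which the Poly\'a--Szeg\H{o} and Hardy--Littlewood inequalities apply directly and yield the equality case via the rigidity of Poly\'a--Szeg\H{o}. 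Your Riesz approach is morally equivalent (the composite kernel is indeed the Green function of $-d^2/dt^2+\lambda_k^2$, hence symmetric decreasing), but the paper's energy reformulation gives the equality case for free rather than leaving it as ``where the real work lies''.

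Finally, your last sentence is wrong as stated: ``$c_{\min}^\mu$ is an extreme point and converges to $\bar c$, hence equals $\bar c$ for $\mu\ge\mu_0$'' is a non-sequitur. The set of extreme points $\{c:c^\#=\bar c\}$ is not discrete---one can perturb $\bar c$ by swapping two small intervals to get extreme points arbitrarily close to $\bar c$ in $L^p$ but never equal to any translate of it. Convergence of minimisers to the unique limit minimiser does not by itself upgrade to eventual equality; you would need either a quantitative stability estimate of the form $\Lambda(\bar c)-\Lambda(c)\gtrsim \operatorname{dist}(c,\{\text{translates of }\bar c\})^\alpha$ to beat the $O(\mu^{-1/2})$ error, or a separate argument, neither of which you provide.
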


\textbf{Second scaling limit:  singular perturbations} 
In the next result we assume the following:
\begin{equation}\label{Hyp:VNnDeg}\tag{$\bold{H}_V$}
\text{$V\in \mathscr C^2(\mathbb{T}^d)$, $x^*=0$ is the only local maximiser of $V$ in $\mathbb{T}^d$ and $\n^2 V(0)\in S_d^{--}(\R)$.}
\end{equation}
Here, $ S_d^{--}(\R)$ denotes the class of symmetric negative definite matrix. For any $\e>0$, let $u_{c,\e}$ be the eigenfunction of
\begin{equation}\label{Eq:Main2}
 \begin{cases}
\e\partial_t u_{c,\e}-\e^2 \Delta u_{c,\e}=cVu_{c,\e}+\lambda_\e(c)u_{c,\e}&\text{ in }\TTo\,, 
 \\ u_{c,\e}(0,\cdot)=u_{c,\e}(T,\cdot)\,, 
 \\ u_{c,\e}\geq 0\,, \neq 0.\end{cases}\end{equation}
We consider
 \begin{equation}\label{Eq:PvLowDiffusivity}
 \min_{c\in \mathcal C}\lambda_\e(c).\end{equation}

\begin{theorem}\label{Th:SmallDiffusivity}Assume $\bar c\geq 0$. 
Let $V$ be fixed and satisfy \eqref{Hyp:VNnDeg} and, for any $\mu>0$, $c_{\mathrm{min}}^\e$ be a solution of \begin{equation}\label{Eq:Pv3}\min_{c\in \mathcal C}\lambda_\e(c).\end{equation}
 Up to a translation, \[ c_{\mathrm{min}}^\e\underset{\e\to0^*}\rightarrow\bar c=\bar c^\#\text{ in }L^p((0,T))\text{ for any }p\in [1;+\infty).\]\end{theorem}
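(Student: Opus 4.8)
The plan is to analyse the singular perturbation limit $\varepsilon \to 0$ of the eigenvalue $\lambda_\varepsilon(c)$ by deriving a first-order asymptotic expansion, then optimise the limiting functional over $\mathcal C$ and conclude by a $\Gamma$-convergence / stability argument that the minimisers converge to the minimiser of the limit problem, which will turn out to be $\bar c$ itself.

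\textbf{Step 1: Asymptotics of $\lambda_\varepsilon(c)$.} The first task is to identify $\lim_{\varepsilon\to 0}\lambda_\varepsilon(c)$ and, crucially, the rate. Rescaling space near the maximiser $x^*=0$ by $x = \sqrt{\varepsilon}\,y$ turns \eqref{Eq:Main2} into a problem on $\R^d$ (after unfolding the torus, which is legitimate since the eigenfunction concentrates) whose potential is, to leading order, the quadratic form $\tfrac12 c(t)\langle y, \n^2 V(0) y\rangle$ plus the constant $c(t)V(0)$. One therefore expects
\[
\lambda_\varepsilon(c) = -V(0)\,\fint_0^T c + \sqrt{\varepsilon}\,\Lambda(c) + o(\sqrt\varepsilon),
\]
where $\Lambda(c)$ is, up to sign, the first eigenvalue of the Gaussian-type problem \eqref{Eq:EigenGaussian} with $A = -\n^2 V(0) \in S_d^{++}(\R)$. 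Making this rigorous requires a matched-asymptotics / test-function construction: one builds quasi-modes for \eqref{Eq:Main2} from the Gaussian eigenfunction of Theorem \ref{Th:Gaussian}, localised by a cutoff, to get the upper bound on $\lambda_\varepsilon$, and a lower bound via a Rayleigh-quotient / Agmon-type localisation estimate showing the eigenfunction concentrates at scale $\sqrt\varepsilon$ around $0$ because $0$ is the \emph{unique} local maximiser of $V$. The hypothesis \eqref{Hyp:VNnDeg} is exactly what is needed here; $\bar c\ge 0$ is used so that the rescaled potential $-\tfrac12 c(t)\langle y, \n^2V(0) y\rangle \ge 0$ has the right sign for the Gaussian theory to apply. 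Since the leading term $-V(0)\fint_0^T c$ is the \emph{same} for every $c\in\mathcal C$ (the mean $\int_0^T c$ is fixed by $c\preceq\bar c$), the minimisation of $\lambda_\varepsilon$ over $\mathcal C$ is, to leading order in $\varepsilon$, governed by the minimisation of $\Lambda(c)$.

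\textbf{Step 2: Optimising the limit functional.} By Theorem \ref{Th:Gaussian} applied with $A=-\n^2V(0)$, one has $\Lambda(c) = \overline\lambda(c) \geq \overline\lambda(c^\#)$ with equality iff $c = c^\#$ up to translation. Combined with the monotonicity of $\overline\lambda$ under the domination order $\preceq$ — which I would extract either directly from Theorem \ref{Th:Gaussian}'s rearrangement inequality applied along a suitable path, or, more robustly, from the strict concavity of $c\mapsto\overline\lambda(c)$ together with the fact (Remark \ref{Re:Topological}) that the extreme points of $\mathcal C$ are precisely the functions with $c^\# = \bar c$ — one concludes that $\overline\lambda$ restricted to $\mathcal C$ attains its minimum uniquely (up to translation) at $c = \bar c$. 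Strict concavity plus the geometry of $\mathcal C$ gives that every minimiser is an extreme point, and the rearrangement inequality then pins the extreme point down to $\bar c$.

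\textbf{Step 3: Passing to the limit on minimisers.} Let $c_{\mathrm{min}}^\varepsilon$ solve \eqref{Eq:Pv3}. By weak-$*$ compactness of $\mathcal C$ (Remark \ref{Re:Topological}), extract a subsequence with $c_{\mathrm{min}}^\varepsilon \rightharpoonup c_*$. Using the expansion of Step 1 together with continuity/semicontinuity of $c\mapsto\Lambda(c)$ along the domination order — here I would compare $\lambda_\varepsilon(c_{\mathrm{min}}^\varepsilon)$ with $\lambda_\varepsilon(\bar c)$, divide by $\sqrt\varepsilon$ after subtracting the common leading term, and let $\varepsilon\to 0$ — one gets $\Lambda(c_*) \leq \Lambda(\bar c) \leq \Lambda(c)$ for all $c\in\mathcal C$, so $c_*$ minimises $\Lambda$ on $\mathcal C$, hence $c_* = \bar c = \bar c^\#$ up to translation by Step 2. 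Since the limit is independent of the subsequence, the whole family converges. Finally, upgrading weak-$*$ convergence to strong $L^p$ convergence for $p<\infty$: because $c_{\mathrm{min}}^\varepsilon$ and $\bar c$ have comparable (and in the limit equal) distribution functions, the weak convergence $c_{\mathrm{min}}^\varepsilon \rightharpoonup \bar c$ forces convergence of $L^2$ norms (as $\|c_{\mathrm{min}}^\varepsilon\|_{L^2}^2 = \|(c_{\mathrm{min}}^\varepsilon)^\#\|_{L^2}^2 \to \|\bar c\|_{L^2}^2$, using Remark \ref{Re:Topological}), and weak convergence plus norm convergence in the Hilbert space $L^2$ gives strong $L^2$, hence strong $L^p$ for $p<\infty$ by boundedness in $L^\infty$ and interpolation.

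\textbf{Main obstacle.} The delicate part is Step 1, the rigorous singular-perturbation asymptotics: controlling the error terms in the rescaling (the higher-order Taylor remainder of $V$ away from $0$, the matching of the Gaussian profile with its torus cutoff, and the exponential localisation of the eigenfunction away from $x^*=0$) uniformly enough to extract the $\sqrt\varepsilon$-order term and to justify that this order term is continuous along weakly-$*$ convergent sequences in $\mathcal C$. The time-periodicity is an additional complication since the standard semiclassical WKB machinery is for elliptic self-adjoint operators; one must instead work with the parabolic Perron-Frobenius / variational characterisation, e.g. Proposition \ref{Pr:Variational}, adapted to the rescaled problem, which is why I would route the lower bound through a Rayleigh-quotient argument rather than a spectral-gap argument.
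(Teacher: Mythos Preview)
Your overall strategy is correct and matches the paper's: asymptotic expansion of $\lambda_\e(c)$, identification of the limiting functional as the Gaussian eigenvalue $\overline\lambda(c)$, optimisation via Theorem~\ref{Th:Gaussian} together with concavity and the extreme-point structure of $\mathcal C$, then a weak-to-strong upgrade using that minimisers are extreme points. There is, however, a scaling error in Step~1: the correction is of order $\e$, not $\sqrt\e$. After the rescaling $x=\sqrt\e\,y$ the operator $\e\partial_t-\e^2\Delta_x$ becomes $\e(\partial_t-\Delta_y)$, so dividing by $\e$ one obtains $\dfrac{\lambda_\e(c)+V(0)\fint_0^Tc}{\e}\to\overline\lambda(c)$ (this is Proposition~\ref{Pr:BlowUp}). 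The error is cosmetic and does not affect the logic.

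More substantively, the paper's route through Step~1 differs from yours. Rather than building quasi-modes for an upper bound and proving Agmon-type localisation for a lower bound, the paper argues by direct blow-up and compactness: after a gauge transformation removing the singular term $c(t)V(0)/\e$ and securing the a~priori bound $0\le\lambda_\e/\e\le C$ via the elliptic semiclassical estimate of Proposition~\ref{Pr:Holcman}, one simply passes to the limit in the rescaled equation using parabolic regularity and obtains a nontrivial bounded solution of \eqref{Eq:EigenGaussian}. The identification of the limit eigenvalue then rests entirely on the \emph{uniqueness} of the eigenpair for the Gaussian problem (Proposition~\ref{Pr:UniqueConfining}), established through a Donsker--Varadhan convexity argument. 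This sidesteps quasi-modes and matched asymptotics, and moreover yields the uniform version along weakly convergent sequences (Proposition~\ref{Pr:CvMin}) essentially for free, since the rescaled potential $c_\e(t)V(\sqrt\e\,y)/\e$ still converges weakly to the quadratic limit --- precisely the ``continuity along weak limits'' you flag as a concern. Your approach would work but is heavier; the paper's is more economical because the analytic effort is concentrated in the uniqueness of the limit problem rather than in quantitative localisation.
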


\subsection{Some comments on our proofs -- Can our results be linked to time-periodic Talenti inequalities?}
Both Theorems \ref{Th:SmallDiffusivity} and \ref{Th:LargeDiffusivity} ultimately rely on elliptic Talenti inequalities; we show that the optimisation problems can be recast as an energy minimisation for an elliptic problem. Nevertheless, the two limit problems obtained are different in nature: in Theorem \ref{Th:LargeDiffusivity}, the limit problem is  a linear heat equation that can be dealt with using Fourier series, and the standard Talenti inequalities. In Theorem \ref{Th:SmallDiffusivity}, the main tool is the Hopf-Cole transform, which allows to relate \eqref{Pv:2} to a  problem set in all of $\R^d$ using the theory of singular perturbations; we build on some geometric estimates of \cite{zbMATH05041278}. As is often the case in singular perturbation problems, assumptions such  as \eqref{Hyp:VNnDeg} are required. It is worth observing that in proving Theorem \ref{Th:SmallDiffusivity} we will use Theorem \ref{Th:Gaussian}, which also relies on elliptic Talenti inequalities.  This begs the question of the validity of Talenti inequalities for time rearrangements of parabolic equations.

To be more specific, Theorem \ref{Th:LargeDiffusivity} relies on the following fact: we let, for any $c\in \mathcal C$, $\p_c$ be the unique solution of 
\begin{equation}\label{Eq:Main3}
\begin{cases}
\partial_t\p_c-\Delta\p_c=cV-\left(\fint_0^T c\cdot\fint_\T V\right)\,, 
\\ \p_c(T,\cdot)=\p_c(0,\cdot).\end{cases}\end{equation} 
Then we prove (see the proof of Proposition \ref{Pr:LimitProblem}) that 
\[ \fiint_\TT \p_c^2\leq \fiint_\TT \p_{\bar c}^2,\]using a Fourier series decomposition and applying elliptic Talenti inequalities.
A natural question is whether this inequality  holds for norms other than the $L^2$ norm, in which case this might give hope to derive a general monotonicity result using an isoperimetric inequality approach. We investigate the optimisation problem
\begin{equation}\label{Eq:PvHamiltonian}
\max_{c\in \mathcal C}J_k(c):=\iint_{(0,T)\times \T} \p_c^{2k}.\end{equation}  We will in fact show that $J_k(c)\leq J_k(c^\#)$ for all $c$ and $V$ if, and only if, $k=1$.

Before stating our precise result, let us briefly comment on Talenti inequalities in the elliptic case: the original elliptic Talenti inequality \cite{zbMATH03531830} reads as follows: letting, for any $f$, $f^*$ be its symmetric decreasing rearrangement and $\Omega$ be the unit ball, there holds, for any increasing function $J$, 
\[ \int_\O J(u_f)\leq \int_\O J(u_{f^*})\] where $u_f$ solves $-\Delta u_f=f\,,  u_f\in W^{1,2}_0(\O)$. Vazquez \cite{zbMATH03789116} extended the approach of Talenti to heat equations set in the ball and proved that, retaining the notation $f^*$ for the rearrangement in space of a function $f=f(t,x)$, the following comparison holds: for any increasing convex function $J$, there holds 
\[ \iint_{(0,T)\times \Omega} J(u_f)\leq \iint_{(0,T)\times \O} J(u_f^*)\] where $\partial_t u_f-\Delta u_f=f\,, u(0,\cdot)=0\,, u(t,\cdot)\in W^{1,2}_0(\O)$. In fact, using the domination relation defined in \eqref{Eq:Domination}, he proves that
\[ u_f(t,\cdot)\preceq u_{f^*}(t,\cdot)\] for any $t$.  Alvino, Lions \& Trombetti proposed systematic approaches to this question \cite{Alvino1986,Alvino1990,alvino1991,AlvinoTrombettiLions,Alvino1989,Alvino2010} and established various comparison results, albeit always comparing the solution of a parabolic equation with the solution of a similar equation whose coefficients are suitably symmetrised in space. The question of symmetry in time, and of whether it was possible to have such strong results (namely, that there should be a univocal notion of time symmetrisation that would provide comparison principles) was partially settled in \cite{zbMATH07517798} where the second author shows that it is in general hopeless. Nevertheless, \cite{zbMATH07517798} deals with initial value problems rather than with time-periodic solutions, and so this analysis can not be satisfactory for our needs. In this sense, Theorem \ref{Th:SymmetryBreaking} provides a conclusive (negative) answer and highlights the exceptionality of the $L^2$ norm in parabolic equations.

\bigskip

Our main theorem is the following:
\begin{theorem}\label{Th:SymmetryBreaking}We let $T=2\pi$, $V(x)=\cos(x)$ and  $\bar c=\cos(\pi+t)$. $\bar c$ solves \eqref{Eq:PvHamiltonian} if, and only if, $k=1$.\end{theorem}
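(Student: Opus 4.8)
The plan is to turn \eqref{Eq:PvHamiltonian} into an explicit scalar problem. Since $\fint_{\mathbb T}\cos=0$, the source term in \eqref{Eq:Main3} is just $c(t)\cos x$, and because $-\partial_{xx}\cos x=\cos x$ the solution is separated: $\p_c(t,x)=a_c(t)\cos x$, where $a_c$ is the unique $2\pi$-periodic solution of the scalar ODE $a_c'+a_c=c$ (and $a_c\cos x$ is the zero-mean, hence the, solution of \eqref{Eq:Main3}). Thus $J_k(c)=\bigl(\int_{\mathbb T}\cos^{2k}\bigr)\int_0^{2\pi}a_c^{2k}\,dt$, so \eqref{Eq:PvHamiltonian} amounts to maximising $I_k(c):=\int_0^{2\pi}a_c^{2k}\,dt$ over $c\in\mathcal C$. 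Writing $L:c\mapsto a_c$, in Fourier variables $\widehat{a_c}(n)=\widehat c(n)/(1+in)$, and the adjoint $L^*$ satisfies $\widehat{L^*\psi}(n)=\widehat\psi(n)/(1-in)$ (equivalently, $L^*\psi$ solves $-w'+w=\psi$). For $\bar c=\cos(\pi+t)=-\cos t$ a direct computation gives $a_{\bar c}(t)=-\tfrac12(\cos t+\sin t)=-2^{-1/2}\cos(t-\tfrac\pi4)$. The implication ``$k=1\Rightarrow\bar c$ optimal'' is then immediate: $I_1(c)$ is a positive multiple of $\sum_n|\widehat c(n)|^2/(1+n^2)$, and since $\bar c=\bar c^\#$ this is exactly the quantity controlled by the $L^2$ Talenti estimate behind Proposition~\ref{Pr:LimitProblem}, so $I_1(c)\le I_1(\bar c)$ on $\mathcal C$. (One may also argue directly, since $\sum_n|\widehat c(n)|^2/(1+n^2)\propto\iint_{(0,2\pi)^2}G(t-s)c(t)c(s)$ with $G(t)=\pi\cosh(\pi-|t|)/\sinh\pi$ symmetric decreasing on the circle, and then invoke the circular Riesz rearrangement inequality together with an Abel-summation monotonicity of $h\mapsto\iint G(t-s)h(t)h(s)$ along $\preceq$ among symmetric decreasing $h$.)

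For the converse I would use a first-order optimality condition. The functional $I_k$ is convex and Gateaux differentiable, with gradient at $\bar c$ equal, up to the positive factor $2k$, to $g_k:=L^*\!\bigl(a_{\bar c}^{\,2k-1}\bigr)$. The point is that every rearrangement $\tilde c$ of $\bar c$ belongs to $\mathcal C$ — the maps $s\mapsto\sup_{|E|=s}\int_E c$ depend only on the distribution function of $c$ (see also Remark~\ref{Re:Topological}). Hence, if $\bar c$ solved \eqref{Eq:PvHamiltonian}, then for each such $\tilde c$ the function $\varepsilon\mapsto I_k\bigl((1-\varepsilon)\bar c+\varepsilon\tilde c\bigr)$, which is convex on $[0,1]$ with values in $\mathcal C$, would attain its maximum at $\varepsilon=0$, forcing $\langle g_k,\tilde c\rangle\le\langle g_k,\bar c\rangle$. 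Choosing $\tilde c$ to be the rearrangement of $\bar c$ comonotone with $g_k$ and using the Hardy--Littlewood rearrangement inequality together with its equality case — legitimate because $\bar c=-\cos t$ is strictly monotone on $[0,\pi]$ and on $[\pi,2\pi]$, hence has no level set of positive measure — one obtains $\langle g_k,\bar c\rangle=\int g_k^\#\,\bar c^\#$, whence $g_k=g_k^\#$; that is, $g_k$ must itself be symmetric decreasing about $\pi$.

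It then remains to show this fails for every $k\ge2$, a finite Fourier computation. Writing $a_{\bar c}^{\,2k-1}=-2^{-(2k-1)/2}\cos^{2k-1}(t-\tfrac\pi4)$ and linearising $\cos^{2k-1}$, one sees that $g_k$ equals, up to a nonzero constant, a combination with strictly positive coefficients of the functions $L^*\cos\bigl(m(t-\tfrac\pi4)\bigr)$ over odd integers $m$ with $1\le m\le 2k-1$, and a direct computation gives
$L^*\cos\bigl(m(t-\tfrac\pi4)\bigr)=(1+m^2)^{-1/2}\cos\bigl(m(t-\pi)+\psi_m\bigr)$ with $\psi_m=\tfrac{3m\pi}{4}+\arctan m$.
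A mode $\cos(m(t-\pi)+\psi)$ is symmetric about $\pi$ if and only if $\psi\equiv0\ (\mathrm{mod}\ \pi)$. Here $\psi_1=\pi$, so the $m=1$ mode is harmless (consistently with the case $k=1$, where it is the only mode present); but $\psi_3=\tfrac{9\pi}{4}+\arctan3\equiv\tfrac\pi4+\arctan3$ satisfies $\sin\psi_3=2/\sqrt5\neq0$, and for $k\ge2$ the mode $m=3$ genuinely appears in $g_k$ (coefficient $\propto\binom{2k-1}{k-2}>0$) and is the only source of Fourier frequency $3$ there, so its antisymmetric part $\propto\sin\bigl(3(t-\pi)\bigr)$ cannot be cancelled by any other term. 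Therefore $g_k$ is not symmetric about $\pi$, contradicting the necessary condition, and $\bar c$ does not solve \eqref{Eq:PvHamiltonian} for any $k\ge2$.

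The main obstacle is the second step: the passage from ``$\bar c$ maximises $I_k$ over $\mathcal C$'' to ``$g_k=g_k^\#$''. It relies on three ingredients — that $\mathcal C$ contains all rearrangements of $\bar c$; that convexity and differentiability of $I_k$ still yield a usable first-order inequality even though $\bar c$ sits at an extreme point of $\mathcal C$; and, most delicately, the equality case of the Hardy--Littlewood inequality on the circle, which is precisely where the strict monotonicity of $\bar c=-\cos t$ enters. Once this reduction is in place, the trigonometric computation is routine; the only thing to check is the absence of accidental cancellations, and this is automatic since distinct modes $m$ contribute to distinct Fourier frequencies of $g_k$.
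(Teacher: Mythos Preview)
Your argument is correct and follows the same overall strategy as the paper: reduce to the scalar ODE $a_c'+a_c=c$, derive from first-order optimality a symmetry constraint on the gradient $g_k=L^*(a_{\bar c}^{2k-1})$, and then verify that this constraint fails for $k\ge2$. The paper's $\Psi_{\bar c}$ is precisely your $g_k$ up to a positive constant, and the case $k=1$ is in both treatments referred back to the elliptic Talenti argument of Proposition~\ref{Pr:LimitProblem}.

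The executions of the two key steps differ in interesting ways. To deduce symmetry of $g_k$ from optimality, the paper builds an explicit competitor by swapping two symmetric sub-intervals of $[0,T]$ on which $\Psi$ and $\tilde\Psi$ are strictly ordered; you instead appeal to the equality case of Hardy--Littlewood, exploiting that $\bar c=-\cos t$ has no level sets of positive measure. Your route is cleaner and yields the stronger conclusion $g_k=g_k^\#$, though only symmetry is needed. For the verification of non-symmetry, the paper applies $(\tfrac{d}{dt}+\mathrm{Id})$ to the equation $-\Psi'+\Psi=a_k\omega^{2k-1}$, obtaining $-\Psi''+\Psi=a_kF$ with $F=\omega^{2k-2}\bigl((2k-1)\omega'+\omega\bigr)$, and then simply evaluates $F'(\pi)=(2k-1)(2k-2)/2^{2k-1}$, which vanishes exactly when $k=1$. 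This is shorter than your Fourier expansion of $\cos^{2k-1}(t-\pi/4)$ and the phase computation $\psi_3=\tfrac{9\pi}4+\arctan 3$, but your approach has the merit of localising the obstruction to a single Fourier mode ($m=3$) and making transparent why $m=1$ alone causes no trouble.
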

\begin{remark}\label{Re:CounterExample}
We can not prove theoretically a symmetry breaking result for ``classical" constraints on $c$ (see Remark \ref{Re:Classical}). It seems from numerical simulation that $\bar c$ might actually be a maximiser. We refer to section \ref{Se:Numerics}.\end{remark}

\section{Proof of Theorem \ref{Th:Symmetry1}}
The proof of Theorem \ref{Th:Symmetry1} relies on Proposition \ref{Pr:Variational}. Observe that a version of \eqref{Eq:Variational} was previously obtained in \cite[Proposition 2.6]{zbMATH05834183}, and uses the Heinze change of variables \cite{Heinze}. 
\begin{proof}[Proof of Proposition \ref{Pr:Variational}]
Let $u_m$ be the solution of \eqref{Eq:Main1} and $v_m$ be the principal eigenfunction of the adjoint operator, that is:
\begin{equation}\label{Eq:Main1Adjoint}\begin{cases}
-\partial_t v_m-\Delta v_m=\lambda(m)v_m+mv_m&\text{ in }\TT\,, 
\\ v_m(0,\cdot)=v_m(T,\cdot),
\\ v_m\geq 0\,, \neq 0.\end{cases}\end{equation} We work with the normalisation 
\begin{equation}\label{Eq:Normalisation} \iint_\TT u_m^2=\iint_\TT u_mv_m=1.\end{equation}
Define 
$\alpha_m:=\sqrt{u_mv_m}\,, \beta_m:=\frac12\left(\ln(u_m)-\ln(v_m)\right).$
We use the Hopf-Cole transform by writing 
$ u_m=e^{-\p_m+a_m}\,, v_m=e^{-\psi_m+b_m},$ where we assume that 
$\min_\TT \p_m=\min_\TT \psi_m=0$ and $a_m\,, b_m$ are chosen to satisfy \eqref{Eq:Normalisation}.
Then $(\p_m\,, \psi_m)$ satisfy 
\begin{equation}\label{Eq:HopfCole}
\begin{cases}
\partial_t \p_m-\Delta \p_m+|\n \p_m|^2=-\lambda(m)-m&\text{ in }\TT\,, 
\\ -\partial_t \psi_m-\Delta \psi_m+|\n \psi_m|^2=-\lambda(m)-m&\text{ in }\TT\,, 
\\ \p_m(0,\cdot)-\p_m(T,\cdot)=\psi_m(T,\cdot)-\psi_m(0,\cdot)=0.\end{cases}\end{equation} With these notations, we have
$ \alpha_m=e^{-\frac{\p_m+\psi_m}2}\,, \beta_m=\frac{\psi_m-\p_m}2.$
Substracting the two equations of \eqref{Eq:HopfCole} we obtain 
\[\partial_t \frac{\p_m+\psi_m}2-\Delta\frac{\p_m-\psi_m}2+\left\langle \n\frac{\p_m-\psi_m}2,\n(\p_m+\psi_m)\right\rangle=0.\]
This rewrites
\begin{equation}\label{Eq:Emil}- \frac12\partial_t\alpha_m^2-\n\cdot(\alpha_m^2\n\beta_m)=0.\end{equation}Similarly, summing the equations in \eqref{Eq:HopfCole} and dividing by 2 we deduce that
\begin{multline*}\partial_t\frac{\p_m-\psi_m}2-\Delta\frac{\p_m+\psi_m}2+\left|\n \frac{\p_m+\psi_m}2\right|^2\\+\frac{|\n\p_m|^2+|\n \psi_m|^2-2\langle\n\p_m,\n\psi_m\rangle}4=-\lambda(m)-m\end{multline*}or, using a Hopf-Cole transform, 
\[-\Delta\alpha_m=\left(\lambda(m)+m+\partial_t\beta_m+|\n\beta_m|^2\right)\alpha_m.\]
We deduce the following from \eqref{Eq:Emil}: for any $t\in[0,T]$, $\beta_m$ is the unique minimiser of 
\[ W^{1,2}(\T)\ni\beta\mapsto\int_\T \alpha_m^2|\n\beta|^2-\frac12\int_\T \beta\partial_t\alpha_m^2,\]whence $\beta_m$ is the unique minimiser of 
\[ L^2(0,T;W^{1,2}(\T))\ni\beta\mapsto E(\beta,\alpha_m):=\frac12\iint_\TT \alpha_m^2|\n\beta|^2-\frac12\iint_\TT \beta\partial_t\alpha_m^2,\]
To alleviate notations, define 
\[ A(\TT):=L^2(0,T;W^{1,2}(\T))\cap W^{1,2}(0,T;L^2(\T)).\]
Since $\beta_m\in W^{1,2}(0,T;L^2(\T))$ by construction, this shows that $\beta_m$ solves \[ \min_{\beta \in A(\TT)}\frac12\iint_\TT\left(\partial_t\beta+|\n\beta|^2\right)\alpha_m^2.\]
Similarly, as $\alpha_m>0$ by construction, $\alpha_m$ is the unique minimiser of 
\[  \min_{\alpha \in A(\TT)\,, \iint_\TT \alpha^2=1}\iint_\TT|\n \alpha|^2-\iint_\TT (m+\partial_t\beta_m+|\n\beta_m|^2)\alpha^2.\] Consequently, $(\alpha_m,\beta_m)$ is a solution of the saddle point problem
\begin{multline*}
\min_{\alpha A(\TT)\,, \iint_\TT \alpha^2=1}\\\max_{\beta \in L^2(0,T;W^{1,2}(\T))\cap W^{1,2}(0,T;L^2(\T))}\iint_\TT|\n \alpha|^2-\iint_\TT (m+\partial_t\beta_m+|\n\beta_m|^2)\alpha^2.
 \end{multline*}
Conversely, if $(\tilde\alpha,\tilde\beta)$ is a solution of this saddle point problem with associated value $\Lambda$, direct computations show that, setting $\gamma:=-\ln(\alpha)$, we have 
\[ \partial_t(\gamma+\tilde\beta)-\Delta(\gamma+\tilde\beta)+|\n(\gamma+\tilde\beta)|^2=-\Lambda-m\] whence $e^{-(\gamma+\tilde\beta)}> 0$ solves \eqref{Eq:Main1} with eigenvalue $\Lambda$. The uniqueness of solutions to \eqref{Eq:Main1} allows to conclude. 
\end{proof}

\begin{proof}[Proof of Theorem \ref{Th:Symmetry1}]
We just prove the result for \eqref{Pv:2} as the proof for \eqref{Pv:1} is similar.  Introduce the notation $B(\TT):=L^2(0,T;L^2(\T))$. Let $c_{\mathrm{min}}$ be an optimiser for \eqref{Pv:2}. Up to a translation, we can assume that 
\[ \int_{\frac{T}2}^T c_{\mathrm{min}}=\int_{\frac{T}2}^T \bar c.\] Recall (see \eqref{Eq:Adm2}) that $\bar c^\#=\bar c$. From Proposition \ref{Pr:Variational} we know that 
\begin{multline*} \lambda(c_{\mathrm{min}})=\min_{\alpha\in A(\TT)\,, \iint_\TT \alpha^2=1}\left(\iint_\TT|\n_x\alpha|^2-\iint_\TT c_{\mathrm{min}}V\alpha^2\right.\\-\left.\min_{\beta\in B(\TT)}\iint_\TT\left(-\beta\partial_t\alpha^2+|\n\beta|^2\alpha^2\right)\right).\end{multline*}
Let $(\alpha^*,\beta^*)$ be the unique solution of this saddle-point problem and let $(I_1\,, I_2)$ be defined, respectively, as 
\begin{multline*}I_1:=\frac{\iint_{[0;\frac{T}2]\times \T}|\n_x\alpha^*|^2-\iint_{[0;\frac{T}2]\times \T} c_{\mathrm{min}}V(\alpha^*)^2-\min_{\beta}\iint_{[0;\frac{T}2]\times \T}\left(\partial_t\beta(\alpha^*)^2+|\n\beta|^2(\alpha^*)^2\right)}{\iint_{[0;\frac{T}2]\times \T}(\alpha^*)^2}\\
I_2:=\frac{\iint_{\left[\frac{T}2;T\right]\times \T}|\n_x\alpha^*|^2-\iint_{\left[\frac{T}2;T\right]\times \T} c_{\mathrm{min}}V(\alpha^*)^2-\min_{\beta}\iint_{\left[\frac{T}2;T\right]\times \T}\left(\partial_t\beta(\alpha^*)^2+|\n\beta|^2(\alpha^*)^2\right)}{\iint_{\left[\frac{T}2;T\right]\times \T}(\alpha^*)^2}.
 \end{multline*}It follows from \eqref{Eq:Variational} and the inequality $\frac{a+b}{c+d}\geq \min(\frac{a}c,\frac{b}d)$ that 
 \[ \lambda(c_{\mathrm{min}})\geq \min(I_1,I_2).\] Assume without loss of generality that $I_2\leq I_1$. In that case, define \[ \tilde c_{\mathrm{min}}(t)=\begin{cases}c_{\mathrm{min}}(t)\text{ if }t\in\left[\frac{T}2;T\right]\,, 
 \\ c_{\mathrm{min}}\left({T}-t\right)\text{ else}\end{cases}\] and \[\tilde\alpha^*(t,\cdot):=\begin{cases}\alpha^*(t,\cdot)\text{ if }t\in\left[\frac{T}2;T\right]\,,
 \\ \alpha^*(T-t,\cdot)\text{ else}\,,\end{cases} \tilde\beta^*(t,\cdot):=\begin{cases}\beta^*(t,\cdot)\text{ if }t\in \left[\frac{T}2;T\right]\,, 
 \\ -\beta^*(T-t,\cdot)\text{ else}.\end{cases}\]
 By construction, we have 
 \[ \min_{\beta\in B(\TT)}\iint_\TT -\beta\partial_t(\tilde\alpha^*)^2+|\n \beta|^2(\tilde\alpha^*)^2=\iint_\TT -\tilde\beta^*\partial_t(\tilde\alpha^*)^2+|\n \tilde\beta^*|^2(\tilde\alpha^*)^2\] and 
 \[ I_2=\frac{\iint_\TT |\n\tilde\alpha^*|^2-\iint_\TT \tilde c_{\mathrm{min}}V(\tilde \alpha^*)^2-\iint_\TT -\tilde\beta^*\partial_t(\tilde\alpha^*)^2+|\n \tilde\beta^*|^2(\tilde\alpha^*)^2}{\iint_\TT(\tilde\alpha^*)^2}.\] so that 
 \[\lambda(c_{\mathrm{min}})\geq I_2\geq \lambda(\tilde c_{\mathrm{min}}).\] This establishes that 
 \[ \lambda(c_{\mathrm{min}})\geq \lambda(\tilde c_{\mathrm{min}}).\] Two points remain to be checked: first, that $\tilde c_{\mathrm{min}}\in \mathcal C$ and, second, that if $\lambda(c_{\mathrm{min}})=\lambda(\tilde c_{\mathrm{min}})$, then $c_{\mathrm{min}}=\tilde c_{\mathrm{min}}$. Regarding the first point, it is a direct consequence of the construction of $\tilde c_{\mathrm{min}}$. Indeed, let $s\in(0,T/2)$ and let $E_s^*$ be optimal for the problem
 \[ \max_{E\subset \left[\frac{T}2,T\right]\,, |E|=s}\int_{\frac{T}2}^T\mathds 1_E c_{\mathrm{min}}.\] Let $\tilde E_s$ be the set obtained by symmetrising $E_s^*$ with respect to $t=\frac{T}2$; clearly $\tilde E_s$ is a solution of 
 \[ \max_{E\subset[0,T]\,, |E|=s}\int_0^T \mathds 1_E \tilde c_{\mathrm{min}}.\]
 However, by the Hardy-Littlewood inequality for the decreasing rearrangement, we know that 
 \[ \int_{\frac{T}2}^T \mathds 1_E c_{\mathrm{min}}\leq \int_{\frac{T}2}^T \mathds 1_{E^\#} (c_{\mathrm{min}})^\#\leq \int_{\frac{T}2}^T\mathds 1_{E^\#} \bar c\] by definition of $\mathcal C$. We thus conclude that 
 \[  \max_{E\subset[0,T]\,, |E|=s}\int_0^T \mathds 1_E \tilde c_{\mathrm{min}}\leq \int_{\frac{T}2}^T\mathds 1_{\tilde E_s^*}\bar c\] and, as $\int_0^T \tilde c_{\mathrm{min}}=\int_0^T \bar c$, consequently, $\tilde c_{\mathrm{min}}\preceq \bar c$. Now, assume that 
 \[ \lambda(c_{\mathrm{min}})=\lambda(\tilde c_{\mathrm{min}}).\] We deduce from \eqref{Eq:Variational} that 
 \[ \tilde\alpha^*=\alpha_{\tilde c_{\mathrm{min}}}\,, \tilde\beta^*=\beta_{c_{\mathrm{min}}}.\]
 We already observed, in the proof of Proposition \ref{Pr:Variational}, that 
 \[ \ln(\alpha_m)+\beta_m=\p_m.\] Let $\p^*:=-\ln(u_{c_{\mathrm{min}}})$, so that, defining 
 \[ \tilde \p^*(t,\cdot):=\begin{cases}
 \p^*(t,\cdot)\text{ if }t\geq \frac{T}2\,, 
 \\ \p^*(T-t,\cdot)\text{ else,}
 \end{cases}\tilde \psi^*(t,\cdot):=\begin{cases}
 \psi^*(t,\cdot)\text{ if }t\geq \frac{T}2\,, 
 \\ \psi^*(T-t,\cdot)\text{ else,}
 \end{cases}\]it follows that 
 \[ \ln(\tilde\alpha^*)+\beta^*=\p_{\tilde c_{\mathrm{min}}}=\tilde\p^*\text{ and, similarly, }\psi_{\tilde c_{\mathrm{min}}}=\tilde\psi^* .\]
 As $\tilde c_{\mathrm{min}}$ is symmetric, we obtain 
 \[ \p_{\tilde c_{\mathrm{min}}}(t,\cdot)=\psi_{\tilde c_{\mathrm{min}}}(T-t,\cdot),\] which in turn implies that 
 \[\tilde \p^*(t,\cdot)=\tilde \psi^*(T-t,\cdot).\] Turning back to the definitions of $\tilde\p^*\,, \tilde\psi^*$, we finally conclude that, for any $t\in\left[\frac{T}2;T\right]$, 
 \[ \p^*(t,\cdot)=\psi^*(T-t,\cdot)\,, \p^*(T-t,\cdot)=\psi^*(t,\cdot),\] from which we deduce, from \eqref{Eq:HopfCole}, that $ c_{\mathrm{min}}=\tilde c_{\mathrm{min}}$.
 \end{proof}

\section{Proof of Theorem \ref{Th:Gaussian}}
We being with a study of the eigenpair given by \eqref{Eq:EigenGaussian}.
\subsection{Study of \eqref{Eq:EigenGaussian}}
\subsubsection{Preliminary on periodic solutions of non-linear ODEs}
In this subsection, given $f\in L^\infty_{\mathrm{per}}((0,T))$, we are interested in the existence and uniqueness of a solution $\gamma_f$ to 
\begin{equation}\label{Eq:HJBODE}
\begin{cases}
\gamma_f'+\gamma_f^2=f&\text{ in }(0,T)\,, 
\\ \gamma_f(0)=\gamma_f(T).\end{cases}\end{equation} 
We introduce the following auxiliary eigenvalue problem: 
 for any $\alpha\in \R$, let $\theta_f(\alpha)$ be the first eigenvalue associated with the problem
\begin{equation}\label{Eq:AuxiliaryEigenvalue}\begin{cases}
-y_{\alpha,f}''- 2\alpha y_{\alpha,f}'- \alpha^2y_{\alpha,f}=-fy_{\alpha,f}+\theta_f(\alpha)y_{\alpha,f}&\text{ in }[0,T]\,, 
\\ y_{\alpha,f}(0)=y_{\alpha,f}(T)\,, 
\\ y_{\alpha,f}\geq 0.\end{cases}\end{equation}In this section, for the sake of notational compactness, we use the notation 
\[\langle f\rangle_{[0,T]}=\frac1T\int_0^T f.\]
The main result here is the following:
\begin{theorem}\label{Pr:Uniqueness}For any $f\geq 0\,, f\not\equiv 0$, there exist exactly two solutions $\gamma_f^1\,, \gamma_f^2$ to \eqref{Eq:HJBODE}. Up to relabelling, we have 
\[ \gamma_f^1> 0\,, \gamma_f^2<0\text{ and }\langle \gamma_f^1\rangle_{[0,T]}=-\langle \gamma_f^2\rangle_{[0,T]}.\] Finally, with the notations of \eqref{Eq:AuxiliaryEigenvalue}, 
\[ \theta_f\left(\langle \gamma_f^1\rangle_{[0,T]}\right)=\theta_f\left(\langle\gamma_f^2\rangle_{[0,T]}\right)=0.\]\end{theorem}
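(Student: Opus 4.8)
The plan is to use the connection between the Riccati equation \eqref{Eq:HJBODE} and the linear eigenvalue problem \eqref{Eq:AuxiliaryEigenvalue} via a logarithmic substitution, and then to exploit monotonicity and convexity of $\alpha \mapsto \theta_f(\alpha)$. First I would observe that if $y = y_{\alpha,f} > 0$ is the principal eigenfunction of \eqref{Eq:AuxiliaryEigenvalue} with eigenvalue $\theta_f(\alpha) = 0$, then setting $\gamma := \alpha + \frac{y'}{y}$ (equivalently writing the would-be solution $u = e^{\alpha t} y(t)$ of the parabolic-type linear problem and putting $\gamma = u'/u$), a direct computation shows $\gamma' + \gamma^2 = f$. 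The periodicity $\gamma(0) = \gamma(T)$ follows because $y$ is $T$-periodic, hence $y'/y$ is $T$-periodic. Moreover $\langle \gamma \rangle_{[0,T]} = \alpha + \frac1T\int_0^T (\ln y)' = \alpha$ since $\ln y$ is periodic. So constructing periodic solutions of \eqref{Eq:HJBODE} with prescribed mean $\alpha$ is exactly equivalent to solving $\theta_f(\alpha) = 0$, and the mean of any solution is recovered as the value of $\alpha$.

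Next I would study the function $\alpha \mapsto \theta_f(\alpha)$. Using the substitution $y = e^{-\alpha t} z$ one sees $\theta_f(\alpha)$ is the principal eigenvalue of $z \mapsto -z'' - f z$ on $[0,T]$ but with the \emph{shifted} periodicity condition $z(0) = e^{\alpha T} z(T)$, $z'(0) = e^{\alpha T}(z'(T) + \ldots)$; more cleanly, $\theta_f(\alpha)$ has a variational (Rayleigh-quotient) characterisation over complex $T$-periodic functions $\varphi$ of the quotient $\big(\int_0^T |\varphi' + \alpha \varphi|^2 - f|\varphi|^2\big)/\int_0^T |\varphi|^2$, from which one reads off that $\theta_f$ is concave in $\alpha$ (infimum of affine-quadratic… — actually I would verify it is concave by writing it as an infimum of functions that are concave in $\alpha$, or argue directly via Kato perturbation theory that $\theta_f'' \le 0$). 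One also checks $\theta_f(\alpha) \to -\infty$ as $|\alpha| \to \infty$ (the $\alpha^2 |\varphi|^2$ term, with the wrong sign after the square is expanded against the constraint, dominates), and $\theta_f(0) > 0$: indeed $\theta_f(0)$ is the principal periodic eigenvalue of $-z'' - fz$, and testing against the constant function gives $\theta_f(0) \le -\langle f\rangle_{[0,T]} < 0$ — wait, that has the wrong sign, so I would instead note the principal eigenvalue of $-z''-fz$ with periodic conditions equals $-\langle f \rangle_{[0,T]} + (\text{correction} \le 0)$, hence $\theta_f(0) \le 0$ with equality iff $f \equiv \langle f\rangle$; combined with $f \ge 0$, $f \not\equiv 0$ one gets strict sign information. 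By concavity and the behaviour at $\pm\infty$, the equation $\theta_f(\alpha) = 0$ has exactly two roots $\alpha_1 > \alpha_2$ (the two roots straddle the maximiser; that there is exactly one on each side uses strict concavity), and I would pin down $\alpha_2 = -\alpha_1$ using the symmetry $\theta_f(-\alpha) = \theta_f(\alpha)$, which holds because replacing $\varphi$ by $\overline{\varphi}$ in the Rayleigh quotient sends $\alpha \mapsto -\alpha$. This gives $\langle \gamma_f^1\rangle = \alpha_1 = -\alpha_2 = -\langle \gamma_f^2 \rangle$, and the final claimed identity $\theta_f(\langle\gamma_f^i\rangle) = 0$ is then automatic.

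It remains to get the sign $\gamma_f^1 > 0 > \gamma_f^2$ and that there are \emph{no other} solutions. For the sign: $\gamma_f^1$ has positive mean $\alpha_1 > 0$; if it vanished somewhere, then on an interval it would satisfy $\gamma' = f - \gamma^2 \ge -\gamma^2$ near a zero, and a maximum-principle / phase-plane argument (or: $\gamma' + \gamma^2 = f \ge 0$ forces $\gamma$ to be non-decreasing wherever $\gamma = 0$, so once positive it stays positive, and periodicity plus positive mean forces it positive throughout) gives $\gamma_f^1 > 0$; symmetrically $\gamma_f^2 < 0$ (or use $\gamma \mapsto -\gamma$ combined with $f \mapsto f$ is not a symmetry of \eqref{Eq:HJBODE}, so instead repeat the argument). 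For uniqueness: if $\gamma, \tilde\gamma$ are two periodic solutions with the same mean $\alpha$, then $w := \gamma - \tilde\gamma$ solves $w' = -(\gamma + \tilde\gamma) w$, a linear homogeneous ODE, so $w(t) = w(0)\exp(-\int_0^t(\gamma+\tilde\gamma))$; periodicity of $w$ forces either $w \equiv 0$ or $\int_0^T(\gamma+\tilde\gamma) = 0$, i.e. the common mean is $0$ — but then $\gamma$ has mean zero and $\gamma' + \gamma^2 = f \ge 0, \not\equiv 0$ integrates to $\int_0^T \gamma^2 = \int_0^T f > 0$ while $\int \gamma = 0$, which is consistent, so I would instead argue that for mean $0$ the corresponding $\alpha = 0$ is not a root of $\theta_f = 0$ unless we are in the trivial case, ruling it out — hence $w \equiv 0$. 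Thus each admissible mean yields a unique solution, and there are exactly two admissible means, giving exactly two solutions in total.

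The main obstacle I anticipate is establishing the concavity (or at least the exactly-two-roots structure) of $\alpha \mapsto \theta_f(\alpha)$ cleanly, together with getting all the strict inequalities (strict sign of $\theta_f(0)$, strictness in concavity near the roots) right so that "exactly two" is genuinely justified rather than "at most/at least two"; the sign and no-other-solutions parts are comparatively routine ODE arguments once the $\gamma \leftrightarrow \theta_f$ dictionary is in place.
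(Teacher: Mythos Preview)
Your overall architecture matches the paper's exactly: the $\gamma \leftrightarrow \theta_f$ dictionary via $\gamma = \alpha + y'/y$, the study of the roots of $\alpha \mapsto \theta_f(\alpha)$, the linear ODE for the difference $w = \gamma - \tilde\gamma$ to force opposite means, and the comparison argument $\gamma' \ge -\gamma^2$ for the sign. Two points deserve correction.

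First, you have a sign slip in reading off the operator. From \eqref{Eq:AuxiliaryEigenvalue} at $\alpha=0$ the equation is $-y'' + f y = \theta_f(0)\,y$ (the $-fy$ is on the right-hand side), so the variational formulation is
\[
\theta_f(0)=\min_{y\in W^{1,2}_{\mathrm{per}},\ \int y^2=1}\Big(\int_0^T (y')^2 + \int_0^T f\,y^2\Big),
\]
which is strictly positive since $f\ge 0$, $f\not\equiv 0$. Your confusion (``wait, that has the wrong sign'') comes from writing $-z''-fz$ instead of $-z''+fz$.

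Second, and this is precisely the obstacle you flag at the end: you do not need abstract concavity or Kato perturbation at all. A one-line computation (differentiate the eigenvalue with respect to $\alpha$ using the normalised left/right eigenfunctions, or simply observe that the operator depends on $\alpha$ only through the lower-order term $-2\alpha\partial_t - \alpha^2$ and that $\int y'\,y$ vanishes by periodicity after pairing with the adjoint eigenfunction) gives
\[
\frac{\partial \theta_f}{\partial \alpha}(\alpha) = -2\alpha,\qquad\text{hence}\qquad \theta_f(\alpha)=\theta_f(0)-\alpha^2.
\]
This immediately yields: $\theta_f$ is an inverted parabola with maximum $\theta_f(0)>0$ at $\alpha=0$, so there are \emph{exactly} two roots $\pm\sqrt{\theta_f(0)}$, they are opposite, and the strict-inequality issues you worry about evaporate. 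The paper's proof is organised around this computation; once you have it, your remaining steps (uniqueness for each mean via $w' = -(\gamma+\tilde\gamma)w$, strict positivity from $\gamma'\ge -\gamma^2$ and periodicity) go through exactly as you sketch.
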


The proof of this theorem is lengthy and relies on several intermediate results. We begin with the following proposition. 
 \begin{proposition}\label{Pr:CNS}
For any $f\in L^\infty_{\mathrm{per}}((0,T))$  there exists a solution $\gamma_f$ to \eqref{Eq:HJBODE} if, and only if, there exists $\alpha_f>0$ such that 
\[\theta_f(\alpha_f)=0.\] Moreover, in this case, one has  $\alpha_f=\langle \gamma_f\rangle_{[0,T]}$. \end{proposition}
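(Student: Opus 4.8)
\textbf{Proof proposal for Proposition \ref{Pr:CNS}.}

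The plan is to establish the equivalence by exploiting a Hopf--Cole-type change of variables that turns the Riccati-type equation \eqref{Eq:HJBODE} into the linear eigenvalue problem \eqref{Eq:AuxiliaryEigenvalue}, and then to identify the only way a genuinely $T$-periodic solution of the Riccati equation can exist.

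First I would observe that, given a solution $\gamma_f$ of \eqref{Eq:HJBODE}, one cannot simply write $\gamma_f = -(\ln z)'$ for a $T$-periodic $z$, because $\int_0^T \gamma_f$ need not vanish. Instead, set $\alpha_f := \langle \gamma_f\rangle_{[0,T]}$ and define $y(t) := \exp\bigl(-\int_0^t (\gamma_f(s)-\alpha_f)\,ds\bigr)$. By construction $\gamma_f - \alpha_f = -(\ln y)'$ has zero mean, so $y$ is $T$-periodic and strictly positive, and $y > 0$. A direct computation gives $-y'' = -(\gamma_f-\alpha_f)^2 y + (\gamma_f-\alpha_f)' y$. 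Using the equation $\gamma_f' = f - \gamma_f^2$ and expanding $\gamma_f = (\gamma_f - \alpha_f) + \alpha_f$, one checks that $y$ solves
\[
-y'' - 2\alpha_f y' - \alpha_f^2 y = -f y + \theta\, y
\]
with $\theta = 0$; that is, $y = y_{\alpha_f,f}$ (up to normalisation) is the principal eigenfunction of \eqref{Eq:AuxiliaryEigenvalue} with eigenvalue $\theta_f(\alpha_f) = 0$. Here positivity of $y$ forces it to be the principal eigenfunction, by the Krein--Rutman / Perron--Frobenius uniqueness of a positive eigenfunction for the operator in \eqref{Eq:AuxiliaryEigenvalue}. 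This proves the forward implication together with the identification $\alpha_f = \langle\gamma_f\rangle_{[0,T]}$.

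Conversely, suppose $\theta_f(\alpha_f) = 0$ for some $\alpha_f > 0$, and let $y_{\alpha_f,f} > 0$ be the associated principal eigenfunction, which exists and is unique up to scaling by the standard Krein--Rutman theory for \eqref{Eq:AuxiliaryEigenvalue}. Then $\gamma_f := \alpha_f - (\ln y_{\alpha_f,f})'$ is well-defined and, by $T$-periodicity of $y_{\alpha_f,f}$, is $T$-periodic with $\langle \gamma_f\rangle_{[0,T]} = \alpha_f$ (the logarithmic-derivative term having zero mean). Reversing the computation above — substituting $y_{\alpha_f,f} = \exp(-\int_0^t(\gamma_f - \alpha_f))$ into the eigenvalue equation with $\theta = 0$ — yields exactly $\gamma_f' + \gamma_f^2 = f$, so $\gamma_f$ solves \eqref{Eq:HJBODE}. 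This closes the equivalence.

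The main obstacle, and the step to treat carefully, is the bookkeeping in the change of variables: one must verify that the conjugation by $e^{\alpha_f t}$ (which is what the $2\alpha_f y'$ and $\alpha_f^2 y$ terms encode) is precisely what absorbs the nonzero mean of $\gamma_f$, so that the remaining factor is genuinely $T$-periodic. Concretely, writing $\gamma_f = \alpha_f + g$ with $\langle g\rangle = 0$ and $y = e^{-G}$, $G(t) = \int_0^t g$, the computation $-y'' - 2\alpha_f y' - \alpha_f^2 y = (g' - g^2 - 2\alpha_f g - \alpha_f^2 + \text{[from } y''])\,y$ has to collapse exactly to $(-f + 0)y$, which it does once $g' = f - (g+\alpha_f)^2$ is inserted; I would present this as a one-line verification. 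A secondary point is invoking the right uniqueness statement: the operator in \eqref{Eq:AuxiliaryEigenvalue} is a (non-symmetric, because of the $2\alpha y'$ drift) second-order periodic operator, for which the principal eigenvalue is simple with a one-dimensional positive eigenspace — this is exactly the one-dimensional periodic instance of the theory already cited in the paper for \eqref{Eq:Main1}, so it can be quoted rather than reproved.
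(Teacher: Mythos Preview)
Your approach is exactly the paper's: a Hopf--Cole change of variables linking the Riccati equation to the linear periodic eigenvalue problem, with the mean $\alpha_f$ absorbing the non-periodic part. The structure, the identification $\alpha_f=\langle\gamma_f\rangle_{[0,T]}$, and the appeal to positivity/simplicity of the principal eigenfunction all match.

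There is, however, a sign error in your transformation. With your choice $y=e^{-G}$, $G'=\gamma_f-\alpha_f=:g$, one gets $-y''=(g'-g^2)y$ and hence
\[
-y''-2\alpha_f y'-\alpha_f^2 y=\bigl(g'-g^2+2\alpha_f g-\alpha_f^2\bigr)y,
\]
whereas the Riccati equation gives $g'=f-(g+\alpha_f)^2$; substituting yields $f-2g^2-2\alpha_f^2$, not $-f$. The correct choice is the opposite sign in the exponent, $y=e^{+\int_0^t(\gamma_f-\alpha_f)}$, equivalently $\gamma_f=\alpha_f+(\ln y)'$ in the converse direction; then $-y''-2\alpha_f y'-\alpha_f^2 y=-\bigl(g'+(g+\alpha_f)^2\bigr)y=-fy$ as desired. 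Once this sign is fixed, your argument is correct and coincides with the paper's proof.
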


\begin{proof}[Proof of Proposition \ref{Pr:CNS}]

 First of all, assume that there exists a solution $\gamma_f$ to \eqref{Eq:HJBODE} and define 
$ y_f:t\mapsto e^{ \int_0^t (\gamma_f-\langle \gamma_f\rangle)}.$ $y_f$ is periodic, and direct computations show that
$-y_f''-2\langle \gamma_f\rangle_{[0,T]} y_f'- \langle\gamma_f\rangle_{[0,T]}^2 y_f=-  fy_f. $ As $y_f>0$ this rewrites
$\theta_f(\langle \gamma_f\rangle_{[0,T]})=0.$ 
Conversely, assume that for a given $\alpha$ we have $\theta_f(\alpha)=0$. In that case, define 
$ \gamma_f:=\alpha+{y_f'}/{y_f}.$
Direct computations show
$\gamma_f'+\gamma_f^2=f.$
\end{proof}
We continue with a basic estimate on \eqref{Eq:AuxiliaryEigenvalue}\begin{lemma}\label{Le:Estimate}
For any $f\in L^\infty_{\mathrm{per}}((0,T))\,, f \geq 0\,, f\not\equiv0$, there holds 
\[ \theta_f(\alpha)\underset{\alpha\to \pm\infty}\rightarrow -\infty\,, \theta_f(0)>0.\] Furthermore, there exists a unique $\alpha>0$ such that $\theta_f(\alpha)=0$ and a unique $\alpha'<0$ such that $\theta_f(\alpha')=0$.\end{lemma}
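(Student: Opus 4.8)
The plan is to prove Lemma \ref{Le:Estimate} by analysing the auxiliary eigenvalue $\theta_f(\alpha)$ through its Rayleigh-type variational characterisation and a monotone-concavity argument in $\alpha$. First I would recall that, since the operator in \eqref{Eq:AuxiliaryEigenvalue} is a one-dimensional periodic Schrödinger-type operator (after conjugating away the first-order term), $\theta_f(\alpha)$ is simple, its principal eigenfunction $y_{\alpha,f}$ is positive, and $\theta_f$ depends continuously on $\alpha$. To get a workable formula I would substitute $y_{\alpha,f}=e^{-\alpha t}z$, which transforms \eqref{Eq:AuxiliaryEigenvalue} into $-z''=-fz+\theta_f(\alpha)z$ but with the \emph{twisted} periodicity condition $z(T)=e^{\alpha T}z(0)$, $z'(T)=e^{\alpha T}z'(0)$; alternatively, and more usefully, conjugating by $e^{-\int_0^t(\gamma-\langle\gamma\rangle)}$-type factors one sees $\theta_f(\alpha)$ is exactly the principal eigenvalue of the genuinely periodic problem for the operator $-\partial_t^2 - 2\alpha\partial_t - \alpha^2 + f$, so by the Krein–Rutman/Perron–Frobenius theory it admits a min–max (Donsker–Varadhan / Holland-type) formula. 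The cleanest route is: for any $\alpha$, testing the equation against $y_{\alpha,f}$ and integrating by parts over $[0,T]$ with periodic boundary conditions kills the cross terms $\int y' y$ and $\int (y')\cdot$const, giving
\[
\theta_f(\alpha)=\frac{\int_0^T (y_{\alpha,f}')^2 + \int_0^T (f-\alpha^2) y_{\alpha,f}^2}{\int_0^T y_{\alpha,f}^2}.
\]

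Next I would establish the three claims. For $\theta_f(0)>0$: at $\alpha=0$ the operator is $-\partial_t^2 + f$ with $f\geq 0$, $f\not\equiv 0$, and periodic conditions, so $\theta_f(0)=\min_{\|y\|_2=1}\int (y')^2 + \int f y^2$; the minimiser is positive, hence not identically a point mass, so $\int f y^2 >0$ strictly (as $f\not\equiv 0$ and $y>0$), giving $\theta_f(0)>0$. For the behaviour as $\alpha\to\pm\infty$: from the displayed formula, since $y_{\alpha,f}>0$, we have $\theta_f(\alpha)\leq \frac{\int (y_{\alpha,f}')^2 + \int f y_{\alpha,f}^2}{\int y_{\alpha,f}^2} - \alpha^2$; bounding the first quotient is the delicate point — one needs that it does not blow up as fast as $\alpha^2$. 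I would handle this by \emph{choosing a test function} rather than using the true eigenfunction: plugging $y\equiv 1$ into the min–max characterisation of $\theta_f(\alpha)$ (valid after the conjugation showing $\theta_f(\alpha)$ is a principal periodic eigenvalue; or directly: $y\equiv1$ satisfies the boundary conditions and $-y''-2\alpha y'-\alpha^2 y + fy = (f-\alpha^2)y$, so testing against the adjoint eigenfunction and using positivity gives $\theta_f(\alpha)\leq \langle f\rangle_{[0,T]} - \alpha^2$). Hence $\theta_f(\alpha)\to-\infty$ as $|\alpha|\to\infty$.

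Finally, for existence and uniqueness of the two zeros I would show that $\alpha\mapsto\theta_f(\alpha)$ is strictly concave, or at least that it is strictly decreasing on $[\alpha_0,\infty)$ and strictly increasing on $(-\infty,-\alpha_0]$ for the relevant range, so that combined with $\theta_f(0)>0$ and $\theta_f(\pm\infty)=-\infty$ the intermediate value theorem yields exactly one positive and one negative zero. Concavity of $\alpha\mapsto\theta_f(\alpha)$ follows from the fact that $\theta_f(\alpha)$ is, via the conjugation $y=e^{-\alpha t}z$, the principal eigenvalue of a family of operators depending affinely on $\alpha$ only through a linear term (or, in the twisted-periodicity picture, from the standard concavity of the top Floquet exponent / the ground-state energy as a function of an imaginary magnetic flux — a Kato-type analytic perturbation argument: $\theta_f$ is real-analytic in $\alpha$ with $\theta_f''(\alpha)\leq -2 + (\text{non-positive second-order perturbation terms})<0$). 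I expect the \textbf{main obstacle} to be making the concavity (or monotonicity) argument rigorous and self-contained: the naive first- and second-derivative computations using Feynman–Hellmann require knowing the eigenfunction varies smoothly, and one must carefully track the sign of $\theta_f''(\alpha)$; the safest fix is to avoid differentiating and instead argue directly that $\theta_f(\alpha)+\alpha^2 = \min_{\|y\|_2=1}\big(\int(y')^2 + \int f y^2 - 2\alpha\int y'y\big) = \min_{\|y\|_2=1}\big(\int(y')^2+\int fy^2\big)$ since $\int y'y=0$ for periodic $y$ — so in fact $\theta_f(\alpha)=\theta_f(0)-\alpha^2$ exactly. This collapses everything: $\theta_f(\alpha)=\theta_f(0)-\alpha^2$ is manifestly strictly concave, $\to-\infty$ at $\pm\infty$, equals $\theta_f(0)>0$ at $0$, and vanishes precisely at $\alpha=\pm\sqrt{\theta_f(0)}$, giving the unique positive and negative roots and completing the proof.
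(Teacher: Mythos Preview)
Your treatment of $\theta_f(0)>0$ via the variational formula for the self-adjoint operator $-\partial_t^2+f$ is correct and is exactly what the paper does. The gap is in your final ``collapse'': you write
\[
\theta_f(\alpha)+\alpha^2=\min_{\|y\|_2=1}\Big(\int_0^T(y')^2+\int_0^T fy^2-2\alpha\int_0^T y'y\Big),
\]
but a Rayleigh minimum characterises the principal eigenvalue only for self-adjoint operators, and for $\alpha\neq 0$ the operator $-\partial_t^2-2\alpha\partial_t+f$ is not self-adjoint on $L^2_{\mathrm{per}}$ (its adjoint is $-\partial_t^2+2\alpha\partial_t+f$). What your observation $\int y'y=0$ \emph{does} buy, after testing the eigenvalue equation against $y_{\alpha,f}$ itself, is only a one-sided bound: $\theta_f(\alpha)+\alpha^2$ equals the Rayleigh quotient of the self-adjoint operator $-\partial_t^2+f$ at the particular function $y_{\alpha,f}$, hence $\theta_f(\alpha)+\alpha^2\geq\theta_f(0)$. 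Equality here would force $y_{\alpha,f}$ to coincide with the ground state $y_{0,f}$; subtracting the two eigenvalue equations then gives $-2\alpha\,y_{0,f}'=0$, i.e.\ $y_{0,f}$ constant, i.e.\ $f$ constant. So the exact identity $\theta_f(\alpha)=\theta_f(0)-\alpha^2$ cannot be obtained by this route for non-constant $f$.

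That said, the paper's proof goes down the very same road: it asserts without elaboration that ``a direct computation proves $\partial_\alpha\theta_f(\alpha)=-2\alpha$'', which upon integration is exactly your identity. So your overall plan and endpoint coincide with the paper's; the justification of the key formula is the thin point in both. If you want a self-contained argument for the lemma: keep your proof of $\theta_f(0)>0$; your constant-test-function idea gives $\theta_f(\alpha)\leq \max f-\alpha^2\to-\infty$ (pair $L_\alpha 1=f-\alpha^2$ with the positive adjoint eigenfunction $y_{\alpha,f}^*$ to get $\theta_f(\alpha)=\frac{\int y_{\alpha,f}^* f}{\int y_{\alpha,f}^*}-\alpha^2$); evenness $\theta_f(\alpha)=\theta_f(-\alpha)$ follows from $L_\alpha^*=L_{-\alpha}$; and strict monotonicity of $\theta_f$ on $(0,\infty)$ comes from your own twisted-periodicity substitution $z=e^{\alpha t}y$ together with the strict monotonicity of the Floquet multiplier of Hill's equation $-z''+fz=\sigma z$ below the bottom of its spectrum. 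These four ingredients yield the unique positive and negative zeros without relying on the exact quadratic formula.
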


\begin{proof}[Proof of Lemma \ref{Le:Estimate}]
For $\alpha=0$ the eigenvalue $\theta_f(0)$ has the variational formulation 
\[ \theta_f(0)=\min_{y\in W^{1,2}_{\mathrm{per}}((0,T))\,, \int_0^T y^2=1}\left(\int_0^T (y')^2+\int_0^T y^2 f\right).\] $W^{1,2}_{\mathrm{per}}((0,T))$ stands for the set of $T$-periodic $W^{1,2}$ functions.
 As $f\geq 0\,, \not\equiv 0$ we deduce that $\theta_f(0)>0$.
Furthermore, a direct computation proves that 
$ \frac{\partial \theta_f(\alpha)}{\partial \alpha}=-2\alpha.$ The conclusion  follows.\end{proof}
We now establish the following lemma:
\begin{lemma}\label{Le:AverageSign} Let $f\geq 0\,, f\not\equiv 0$ and assume $\gamma_1\,, \gamma_2$ are two distinct solutions of \eqref{Eq:HJBODE}. Then there holds (up to relabelling)
$0<\langle \gamma_1\rangle_{[0,T]}=-\langle\gamma_2\rangle_{[0,T]}.$
\end{lemma}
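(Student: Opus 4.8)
The plan is to combine Proposition \ref{Pr:CNS} with the explicit quadratic form of $\alpha\mapsto\theta_f(\alpha)$ and a Wronskian-type uniqueness argument for linear first-order ODEs.

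First I would record two consequences of the hypotheses. Integrating \eqref{Eq:HJBODE} over $(0,T)$ and using periodicity gives $\int_0^T\gamma_i^2=\int_0^T f>0$ for $i=1,2$, so neither $\gamma_1$ nor $\gamma_2$ vanishes identically; by Proposition \ref{Pr:CNS} we then have $\theta_f(\langle\gamma_i\rangle_{[0,T]})=0$. Next, the identity $\partial_\alpha\theta_f(\alpha)=-2\alpha$ established in the proof of Lemma \ref{Le:Estimate}, together with the smooth dependence of the simple principal eigenvalue on $\alpha$, integrates to $\theta_f(\alpha)=\theta_f(0)-\alpha^2$. Since $f\geq 0$, $f\not\equiv0$ forces $\theta_f(0)>0$ (Lemma \ref{Le:Estimate}), the equation $\theta_f(\alpha)=0$ has exactly the two roots $\pm\sqrt{\theta_f(0)}$, which are nonzero and opposite to one another. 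Hence each of $\langle\gamma_1\rangle_{[0,T]}$, $\langle\gamma_2\rangle_{[0,T]}$ equals $+\sqrt{\theta_f(0)}$ or $-\sqrt{\theta_f(0)}$.

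It then remains to rule out that the two distinct solutions have the same average. Suppose $\langle\gamma_1\rangle_{[0,T]}=\langle\gamma_2\rangle_{[0,T]}=:\alpha$ and set $h:=\gamma_1-\gamma_2$. Subtracting the two copies of \eqref{Eq:HJBODE} gives $h'=\gamma_2^2-\gamma_1^2=-(\gamma_1+\gamma_2)h$, a linear first-order ODE, so $h(t)=h(0)\exp\bigl(-\int_0^t(\gamma_1+\gamma_2)\bigr)$. Periodicity of $h$ yields $h(0)=h(T)=h(0)e^{-2\alpha T}$; since $\alpha=\pm\sqrt{\theta_f(0)}\neq0$ we get $e^{-2\alpha T}\neq1$, hence $h(0)=0$ and $h\equiv0$, contradicting $\gamma_1\neq\gamma_2$. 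Therefore $\langle\gamma_1\rangle_{[0,T]}\neq\langle\gamma_2\rangle_{[0,T]}$, so after relabelling $\langle\gamma_1\rangle_{[0,T]}=\sqrt{\theta_f(0)}>0$ and $\langle\gamma_2\rangle_{[0,T]}=-\sqrt{\theta_f(0)}=-\langle\gamma_1\rangle_{[0,T]}$, which is the claim.

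The argument is essentially routine; the only point requiring mild care is the passage from the infinitesimal relation $\partial_\alpha\theta_f(\alpha)=-2\alpha$ to the closed form $\theta_f(\alpha)=\theta_f(0)-\alpha^2$, which relies on the $C^1$ (in fact analytic) dependence of the simple principal eigenvalue of \eqref{Eq:AuxiliaryEigenvalue} on the parameter $\alpha$, a standard consequence of Krein--Rutman theory and analytic perturbation theory. Everything else is elementary ODE manipulation.
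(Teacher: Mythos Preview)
Your proof is correct, and the core ingredient---the linear ODE satisfied by $h=\gamma_1-\gamma_2$---is exactly what the paper uses. The organization differs, however, and the paper's route is shorter.

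You first invoke the closed form $\theta_f(\alpha)=\theta_f(0)-\alpha^2$ (via Proposition~\ref{Pr:CNS} and the derivative computation from Lemma~\ref{Le:Estimate}) to pin each average to $\pm\sqrt{\theta_f(0)}$, and only then run the Wronskian argument under the extra hypothesis $\langle\gamma_1\rangle=\langle\gamma_2\rangle$ to reach a contradiction. The paper skips the first step entirely: from $h'=-(\gamma_1+\gamma_2)h$ and $h\not\equiv 0$ one has $h(0)\neq 0$, so periodicity of $h$ forces $e^{-T(\langle\gamma_1\rangle+\langle\gamma_2\rangle)}=1$, i.e.\ $\langle\gamma_1\rangle=-\langle\gamma_2\rangle$ directly, with no case analysis. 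The fact that these averages are nonzero then follows from $\theta_f(0)>0$ and Proposition~\ref{Pr:CNS}. In other words, the same computation you use only to exclude equal averages already yields the full opposite-average conclusion; your appeal to the explicit quadratic form of $\theta_f$ and to analytic perturbation theory is not needed for this lemma.
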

\begin{proof}[Proof of Lemma \ref{Le:AverageSign}]
Define $\tilde \gamma:=\gamma_1-\gamma_2$. Then $\tilde\gamma$ solves
$ \tilde\gamma'+\tilde\gamma(\gamma_1+\gamma_2)=0.$ Upon integration, this implies 
$ \tilde \gamma(T)=\tilde\gamma(0)e^{-T\langle\gamma_2\rangle_{[0,T]}-T\langle \gamma_1\rangle_{[0,T]}},$ whence, as $\tilde\gamma\neq 0$, 
$ \langle\gamma_1\rangle_{[0,T]}=-\langle\gamma_2\rangle_{[0,T]}.$ As $\theta_f(0)>0$, $\langle\gamma_1\rangle_{[0,T]} \neq 0$ and  the conclusion follows.

\end{proof}
\begin{corollary}\label{Co:Uniqueness}
There exists at most one non-negative and one non-positive solution of \eqref{Eq:HJBODE}.
\end{corollary}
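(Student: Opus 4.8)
The plan is to read the statement off the linear equation satisfied by the difference of two solutions, which has already been produced in the proof of Lemma \ref{Le:AverageSign}. Let $\gamma_1\neq\gamma_2$ be two solutions of \eqref{Eq:HJBODE} and set $\tilde\gamma:=\gamma_1-\gamma_2$. Subtracting the two Riccati equations gives $\tilde\gamma'+(\gamma_1+\gamma_2)\tilde\gamma=0$, hence $\tilde\gamma(t)=\tilde\gamma(0)\exp\left(-\int_0^t(\gamma_1+\gamma_2)\right)$. Since $\tilde\gamma$ is $T$-periodic and, being the difference of two distinct solutions of a first-order ODE, never vanishes (uniqueness for the linear Cauchy problem it solves), evaluating at $t=T$ forces $\langle\gamma_1\rangle_{[0,T]}+\langle\gamma_2\rangle_{[0,T]}=0$.

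From here I would argue by contradiction. Suppose $\gamma_1,\gamma_2\geq0$ are two distinct non-negative solutions. Then $\langle\gamma_1\rangle_{[0,T]},\langle\gamma_2\rangle_{[0,T]}\geq0$, and since these averages sum to zero they both vanish; as each $\gamma_i$ is non-negative, this forces $\gamma_1\equiv\gamma_2\equiv0$, contradicting $\gamma_1\neq\gamma_2$. Hence there is at most one non-negative solution. The non-positive case is identical with all inequalities reversed. Equivalently, under the running hypothesis $f\geq0$, $f\not\equiv0$ of this subsection one may simply invoke Lemma \ref{Le:AverageSign} verbatim: two distinct solutions have averages of strictly opposite sign, so at most one of them can be non-negative and at most one non-positive.

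There is no real obstacle here: the corollary is an immediate consequence of material already established, the only mildly delicate point being that $\tilde\gamma$ does not vanish — that is exactly what makes the periodicity condition $\tilde\gamma(0)=\tilde\gamma(T)$ translate into the constraint $\langle\gamma_1\rangle_{[0,T]}=-\langle\gamma_2\rangle_{[0,T]}$.
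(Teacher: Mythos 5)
Your proposal is correct and takes essentially the same route as the paper, which gives no explicit proof for this corollary and simply reads it off Lemma \ref{Le:AverageSign}. You re-derive the constraint $\langle\gamma_1\rangle_{[0,T]}+\langle\gamma_2\rangle_{[0,T]}=0$ via the linear equation for $\tilde\gamma=\gamma_1-\gamma_2$ (correctly noting that $\tilde\gamma$ cannot vanish, which is the point the paper leaves implicit with its remark ``as $\tilde\gamma\neq 0$'') and then the sign argument is immediate; your closing observation that one may equivalently cite Lemma \ref{Le:AverageSign} verbatim is exactly the intended reading. A minor bonus worth recording: your first contradiction argument only uses $\langle\gamma_1\rangle_{[0,T]}+\langle\gamma_2\rangle_{[0,T]}=0$ together with the sign of the $\gamma_i$ themselves, so it does not actually invoke the standing hypothesis $f\geq0$, $f\not\equiv0$ (which Lemma \ref{Le:AverageSign} does use, via $\theta_f(0)>0$, to get \emph{strict} signs on the averages); this makes your phrasing marginally more general, though the distinction is immaterial in the paper's context.
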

\begin{lemma}\label{Le:Sign}
Let $f\geq 0\,, \not\equiv 0$ and $\gamma$ be a solution of \eqref{Eq:HJBODE} with $\langle \gamma\rangle_{[0,T]}>0$.Then $\gamma>0$ in $(0,T)$. Similarly, if $\langle\gamma\rangle_{[0,T]}<0$, then $\gamma< 0$ in $(0,T)$.
\end{lemma}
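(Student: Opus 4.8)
\textbf{Proof proposal for Lemma \ref{Le:Sign}.}

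The plan is to exploit the first-order ODE structure of \eqref{Eq:HJBODE} together with the sign information encoded in the average $\langle\gamma\rangle_{[0,T]}$. Suppose $\langle\gamma\rangle_{[0,T]}>0$; the opposite case is symmetric (and can also be reduced to it by noting that if $\gamma$ solves \eqref{Eq:HJBODE} then so does $-\gamma$ with $f$ replaced by $-f$, though since the sign of $f$ matters one should argue directly). First I would observe that the ODE $\gamma' = f - \gamma^2$ cannot have $\gamma$ touch zero transversally with the ``wrong'' sign pattern: at any point $t_0$ where $\gamma(t_0)=0$ we have $\gamma'(t_0)=f(t_0)\geq 0$, so $\gamma$ can only cross zero from below to above (or stay at zero on a set where $f=0$). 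Consequently the set $\{\gamma<0\}$ is ``downstream-closed'' in a suitable sense: once $\gamma$ becomes nonnegative it can never return to negative values until it has come back around the period.

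The cleaner route, which I would actually write up, is via the substitution already used in Proposition \ref{Pr:CNS}. Since $\gamma$ solves \eqref{Eq:HJBODE}, the function $y(t):=\exp\!\big(\int_0^t(\gamma-\langle\gamma\rangle_{[0,T]})\big)$ is positive, $T$-periodic, and satisfies $y'/y = \gamma - \langle\gamma\rangle_{[0,T]}$, i.e. $\gamma = \langle\gamma\rangle_{[0,T]} + y'/y$. Writing $\alpha:=\langle\gamma\rangle_{[0,T]}>0$, the sign of $\gamma$ at a point is the sign of $\alpha y + y'$. Now consider a point $t_0$ where $y$ attains its minimum over $[0,T]$ (extended periodically); there $y'(t_0)=0$ and $y(t_0)>0$, so $\gamma(t_0)=\alpha>0$. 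The key claim is then that $\gamma$ has no zero at all. Suppose for contradiction $\gamma(t_1)=0$ for some $t_1$; then $y'(t_1)=-\alpha y(t_1)<0$, so $y$ is strictly decreasing at $t_1$. I would then track the evolution: because $\gamma'=f-\gamma^2$ and $f\geq 0$, on any maximal interval where $\gamma<0$ we have $\gamma'=f-\gamma^2$; at the left endpoint of such an interval $\gamma=0$ and $\gamma'=f\geq 0$, which contradicts $\gamma$ entering the region $\{\gamma<0\}$ unless $f\equiv 0$ on a neighbourhood — and even then $\gamma$ stays at $0$ rather than going negative. Hence $\{\gamma<0\}=\emptyset$, so $\gamma\geq 0$ everywhere; and since $\langle\gamma\rangle_{[0,T]}>0$ forces $\gamma\not\equiv 0$, combined with $\gamma'=f-\gamma^2$ one upgrades $\gamma\geq 0$ to $\gamma>0$ on $(0,T)$ by a standard uniqueness argument for the linear ODE $\gamma'+\gamma(\gamma)=f$ read as $\gamma' = -\gamma\cdot\gamma + f$ (if $\gamma(t_2)=0$ then locally $\gamma$ solves the same Cauchy problem as a nonnegative solution and, where $f$ vanishes, as the zero solution, forcing $\gamma\equiv 0$ there, and one propagates).

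The main obstacle I anticipate is handling the degenerate case where $f$ vanishes on a set of positive measure: there $\gamma$ could in principle sit at $0$ on a subinterval, so the strict positivity $\gamma>0$ on all of $(0,T)$ needs the periodicity and the positivity of $\langle\gamma\rangle$ crucially — a purely local argument is not enough. I would deal with this by a connectedness/propagation argument: let $Z=\{t\in[0,T]:\gamma(t)=0\}$; if $Z$ were nonempty, pick $t_*\in Z$; since $\gamma'(t_*)=f(t_*)$ and, more robustly, since on any interval $[t_*,t_*+\delta]$ we have $\gamma(t)=\int_{t_*}^t(f-\gamma^2)$, a Grönwall-type estimate shows $\gamma$ cannot go negative, so $\gamma\geq 0$ everywhere first; then either $\gamma\equiv 0$ (excluded by the average) or $\gamma>0$ somewhere, and the equation $\gamma'=f-\gamma^2\geq -\gamma^2$ together with $\gamma\geq 0$ and continuity forces, via comparison with $z'=-z^2$ (whose nonnegative solutions starting positive never reach $0$ in finite time), that $\gamma$ stays strictly positive once positive and backward in time cannot have been $0$ — contradicting $t_*\in Z$ unless $Z$ is empty. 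I would double-check the edge behaviour at $t=0$ and $t=T$ using periodicity so that ``$(0,T)$'' in the statement is justified.
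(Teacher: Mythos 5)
Your core idea is the right one, and it is the same one the paper uses: once you know $\gamma$ is positive somewhere (which follows from $\langle\gamma\rangle_{[0,T]}>0$ and continuity), you compare with the explicit solution of $z'=-z^2$ to conclude that $\gamma$ stays strictly positive forever. The paper does exactly this, but in four lines: since $\gamma$ is periodic with positive mean, translate so $\gamma(0)>0$; then $f\geq 0$ gives $\gamma'+\gamma^2\geq 0$, and the comparison principle yields $\gamma(t)\geq \frac{1}{t+1/\gamma(0)}>0$, which by periodicity is global. All the hand-wringing in your write-up about degenerate zero sets, about $\gamma$ possibly ``sitting at zero'' where $f$ vanishes, and the detour through $y=\exp\!\int(\gamma-\alpha)$ is unnecessary: the single comparison inequality already forbids $\gamma$ from ever reaching $0$ after a point where it is positive, degenerate $f$ or not. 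Your preliminary observation that $\gamma'(t_0)=f(t_0)\geq 0$ at a zero is true almost everywhere but is not by itself a proof (a tangential touch with $\gamma'=0$ does not preclude a sign change), and the comparison argument is what actually closes this.

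Two concrete corrections. First, your parenthetical claim that ``if $\gamma$ solves \eqref{Eq:HJBODE} then so does $-\gamma$ with $f$ replaced by $-f$'' is false: $(-\gamma)'+(-\gamma)^2=-\gamma'+\gamma^2=-f+2\gamma^2\neq -f$. You do flag that something is off (``one should argue directly''), but you never supply the fix. The correct reduction, and the one the paper uses, is the reflection $\tilde\gamma(t):=-\gamma(T-t)$, which satisfies $\tilde\gamma'+\tilde\gamma^2=f(T-\cdot)\geq 0$ and $\langle\tilde\gamma\rangle_{[0,T]}=-\langle\gamma\rangle_{[0,T]}>0$, reducing the negative-mean case to the positive-mean case with a still-admissible right-hand side. (A direct argument also works: if $\gamma(t_1)\geq 0$ for some $t_1$ then forward comparison plus periodicity give $\gamma\geq 0$ everywhere, contradicting $\langle\gamma\rangle<0$; but you should say this rather than leave it hanging.) Second, the ``cleaner route'' via $y$ only yields $\gamma(t_0)=\alpha>0$ at a minimum of $y$ and does not conclude on its own — you end up falling back on the comparison argument anyway, so it is a detour rather than a simplification. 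Streamline to: pick a point of positivity, compare with $z'=-z^2$, invoke periodicity; handle the negative-mean case by the reflection $\tilde\gamma(t)=-\gamma(T-t)$.
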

\begin{proof}[Proof of Lemma \ref{Le:Sign}]
Assume that $\gamma$ solves \eqref{Eq:HJBODE} and satisfies $\langle \gamma\rangle_{[0,T]}>0$. Up to a translation, we can assume that $\gamma(0)>0$. Since $f\geq 0$, we deduce that 
$\gamma'+\gamma^2\geq 0$ whence, from the comparison principle for ODEs, we deduce
$\gamma(t)\geq \frac1{t+\frac1{\gamma(0)}}$ and the conclusion follows. If, on the other hand, $\langle \gamma\rangle_{[0,T]}<0$, it suffices to observe that $\tilde \gamma:=-\gamma(T-t)$ solves 
$\tilde\gamma'+\tilde\gamma^2=f(T-\cdot)\,, \langle \tilde\gamma\rangle_{[0,T]}>0.$ We are reduced to the previous case.
\end{proof}

\begin{proof}[Proof of Theorem \ref{Pr:Uniqueness}]
From Lemma \ref{Le:Estimate}, let $\alpha_2<0<\alpha_1$ be such that $\theta_f(\alpha_i)=0\,, i=1,2$, and let $\gamma_f^i\ (i=1,2)$ be two solutions of \eqref{Eq:HJBODE} with $\langle \gamma_f^i\rangle_{[0,T]}=\alpha_i$. From Lemma \ref{Le:AverageSign} $\gamma_f^i$ is the unique solution of \eqref{Eq:HJBODE} with $\langle\gamma_f^1\rangle_{[0,T]}>0$ and $\langle \gamma_f^2\rangle_{[0,T]}<0$. From Lemma \ref{Le:Sign}, we have $\gamma_f^1>0\,, \gamma_f^2<0$ in $(0,T)$, which concludes the proof. \end{proof}

\subsubsection{Analysis of \eqref{Eq:EigenGaussian}}\label{sec:w0}
We begin with the following proposition:
\begin{proposition}\label{Pr:UniqueConfining}
If $c\geq 0$, there exists a unique ($\overline\lambda(c),w_0)$ of \eqref{Eq:EigenGaussian}.
\end{proposition}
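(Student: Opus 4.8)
The plan is to reduce the eigenvalue problem \eqref{Eq:EigenGaussian} to the one-dimensional ODE analysis carried out in Theorem \ref{Pr:Uniqueness} by exploiting the quadratic structure of the potential. First I would make the \emph{Gaussian ansatz}: seek $w_0$ in the form $w_0(t,x) = \exp\!\left(-\tfrac12\langle x, Q(t) x\rangle\right)$ where $Q(t)$ is a $T$-periodic path of symmetric matrices to be determined (the normalisation $w_0(0,0)=1$ is then automatic, and $0\le w_0\le 1$ will follow once we know $Q(t)\ge 0$). Plugging this ansatz into $\partial_t w_0 - \Delta w_0 = \overline\lambda(c) w_0 - \tfrac12 c(t)\langle x, A x\rangle w_0$ and matching the quadratic-in-$x$ terms and the constant-in-$x$ terms separately yields a matrix Riccati equation for $Q$ together with a scalar relation fixing $\overline\lambda(c)$: schematically,
\begin{equation*}
-\tfrac12 \langle x, Q'(t) x\rangle - \big(-\Tr Q(t) + \langle x, Q(t)^2 x\rangle\big) = \overline\lambda(c) - \tfrac12 c(t)\langle x, Ax\rangle,
\end{equation*}
so that $Q' = 2Q^2 - c(t) A$ (matching quadratic terms, up to the obvious factor) and $\overline\lambda(c) = \Tr Q(t)$ pointwise in $t$ — the latter forcing $\Tr Q$ to be constant, which is consistent with the Riccati equation. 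This is the natural parabolic analogue of the Hopf–Cole / Riccati reduction already used in the excerpt.

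Second, I would \emph{diagonalise}: write $A = P D P^\top$ with $D = \diag(d_1,\dots,d_d)$, $d_i>0$, and look for $Q(t) = P \, \diag(q_1(t),\dots,q_d(t)) \, P^\top$ with the \emph{same} eigenvectors (one checks this subspace is invariant under the Riccati flow). Then the matrix equation decouples into $d$ scalar Riccati equations $q_i' = 2q_i^2 - d_i c(t)$ with $T$-periodic boundary conditions. The substitution $q_i = -\tfrac12 (z_i'/z_i)$, i.e. the logarithmic derivative, is exactly the scalar Hopf–Cole transform; after rescaling time it brings $q_i' + q_i^2 = (\text{const})\cdot \tfrac{d_i}{2} c(t)$ into the form \eqref{Eq:HJBODE} with $f = (\text{positive multiple of}) \, d_i c$, which is $\ge 0$, $\not\equiv 0$ since $c\ge 0$, $c\not\equiv 0$ (if $c\equiv 0$ the claim is trivial with $Q\equiv 0$, $\overline\lambda=0$, so we may assume $c\not\equiv 0$) and $d_i>0$. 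By Theorem \ref{Pr:Uniqueness} there are exactly two periodic solutions, one with positive average $\gamma_f^1>0$ and one with negative average $\gamma_f^2<0$; \textbf{only the positive one yields a nonnegative $Q$}, hence an admissible (bounded, $\le 1$) Gaussian $w_0$. This selects $q_i$ uniquely, hence $Q$ uniquely, hence $\overline\lambda(c) = \Tr Q = \sum_i q_i(t)$ (constant). Positivity of $w_0$ and the bound $w_0\le w_0(0,0)=1$ follow from $Q(t)\ge 0$; $w_0\in L^\infty$ and decay at infinity give genuine eigenfunction membership in whatever weighted space the problem is posed in.

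Third, for \emph{uniqueness among all eigenpairs} (not just Gaussian ones) I would invoke a Krein–Rutman / maximum-principle argument: the principal eigenfunction of the parabolic operator $\partial_t - \Delta + \tfrac12 c(t)\langle x,Ax\rangle$ with periodic-in-time condition, in the relevant weighted $L^2$ space, is simple and can be chosen positive, and any positive eigenfunction is the principal one. Since we have exhibited a positive eigenfunction $w_0$ of Gaussian type with eigenvalue $\overline\lambda(c)$, simplicity of the principal eigenvalue forces any other nonnegative eigenfunction normalised by $w_0(0,0)=1$ to coincide with it. The main obstacle I anticipate is \emph{not} the algebra of the Riccati reduction (that is routine once the ansatz is in place) but rather the \emph{functional-analytic setup} on the unbounded domain $\R^d$: one must justify existence of a principal eigenvalue with a positive, exponentially decaying eigenfunction for a non-self-adjoint periodic-parabolic operator with an unbounded (confining) potential — there is no off-the-shelf Krein–Rutman statement cited in the excerpt for this non-compact setting. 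I would handle this either by approximation (solve on large balls $B_R$ with Dirichlet conditions, pass $R\to\infty$ using the confining potential for tightness/compactness) or by directly verifying that the Gaussian we constructed is the unique positive periodic solution via a sliding/comparison argument on $Q(t)$, thereby sidestepping the abstract theory altogether.
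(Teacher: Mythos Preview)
Your existence argument has a genuine slip: the ansatz $w_0(t,x)=\exp\!\bigl(-\tfrac12\langle x,Q(t)x\rangle\bigr)$ is missing a scalar time factor. Matching the $x$-independent terms in the equation gives $\Tr Q(t)=\overline\lambda(c)$ \emph{for every $t$}, and you claim this ``is consistent with the Riccati equation''. It is not: from $Q'=-2Q^2+c(t)A$ one gets $(\Tr Q)'=-2\Tr(Q^2)+c(t)\Tr A$, which has no reason to vanish unless $c$ is constant. The fix is the paper's: set $w_0(t,x)=e^{\beta(t)-\frac12\langle x,B_c(t)x\rangle}$ with $\beta'(t)=\overline\lambda(c)-\Tr B_c(t)$, so that periodicity of $\beta$ forces $\overline\lambda(c)=\fint_0^T\Tr B_c$ (the time average, not the pointwise value). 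With this correction your diagonalisation and appeal to Theorem~\ref{Pr:Uniqueness} for each scalar Riccati is exactly the paper's Lemma~\ref{Le:Construction}.

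For uniqueness the paper takes a different route from your Krein--Rutman proposal, and it sidesteps precisely the functional-analytic obstacle you flag. First, uniqueness of the eigenvalue: given any bounded solution $(\overline\lambda,w)$, multiply the equation for $w$ by the Gaussian $w_0$ (which has $e^{-A'\|x\|^2}$ decay) and integrate by parts to get $\overline\lambda=\overline\lambda_0$. Second, uniqueness of the eigenfunction via a Donsker--Varadhan convexity argument: construct the adjoint Gaussian eigenfunction $z_0$ (same recipe, time reversed), set $\eta_0:=w_0z_0$ which is an invariant probability measure with Gaussian tails, write $\p_0:=-\ln w_0$, and consider
\[
\mathscr E(\p):=\iint_{(0,T)\times\R^d}\Bigl(\partial_t\p-\Delta\p+|\nabla\p|^2-\tfrac{c}{2}\langle x,Ax\rangle\Bigr)\,d\eta_0.
\]
This is strictly convex in $\p$, has $\p_0$ as a critical point (since $\eta_0$ solves the dual transport equation), and $\mathscr E(\p_0)=\overline\lambda_0$. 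For any other solution $w$ with $\p:=-\ln w$, direct computation gives $\mathscr E(\p)-\mathscr E(\p_0)=\iint|\nabla(\p-\p_0)|^2\,d\eta_0=0$, hence $\p=\p_0+\mathrm{const}$, and the normalisation pins the constant. No Krein--Rutman on $\R^d$, no approximation by balls; the Gaussian decay of $\eta_0$ is what makes the integrations by parts legitimate. Your sliding/comparison alternative on $Q(t)$ would only yield uniqueness within the Gaussian class, which is not enough.
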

After building a particular solution to \eqref{Eq:EigenGaussian}, the difficult part in this proposition is the uniqueness, which ultimately relies on the Donsker-Varadhan formula. Observe that Lions implicitly uses this approach to derive uniqueness results for spectral problems in \cite[Lecture 7]{Lions}.
We begin with the construction of a particular solution to \eqref{Eq:EigenGaussian}.
\begin{lemma}\label{Le:Construction} Let $P$ be an orthogonal matrix such that 
\[ A=P \mathrm{diag}(\mu_1,\dots,\mu_d)P^T,\] where $0<\mu_1\leq \dots\leq \mu_d$ are the eigenvalues of $A$. An explicit solution of \eqref{Eq:EigenGaussian} is given by
\[ \overline\lambda(c)=\frac1T\int_0^T \mathrm{Tr}(B_c)\,, \quad w_0(t,x)=e^{\beta(t)-\frac{1}2\langle x,B_c(t)x\rangle}\] with 
\[ B_c(t)=P\mathrm{diag}(\xi_1(t),\dots,\xi_d(t))P^T\text{ where, for any $i$, }
\forall i\in \{1,\dots,d\}\, \begin{cases}
\frac{\xi_i'(t)}2+\xi_i(t)^2=\frac{\mu_i}2 c(t)\,, 
\\ \xi_i(0)=\xi_i(T)\,, 
\\ \xi_i>0
\end{cases}\]
and 
\[ \beta(t)=\int_0^t(\overline\lambda(c)-\mathrm{Tr}(B_c(t)))dt.\]
\end{lemma}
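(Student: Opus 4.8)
The plan is to verify directly that the proposed ansatz solves \eqref{Eq:EigenGaussian} by substituting $w_0(t,x)=e^{\beta(t)-\frac12\langle x,B_c(t)x\rangle}$ into the PDE and checking each of the four conditions. First I would compute the relevant derivatives: $\partial_t w_0 = \left(\beta'(t)-\frac12\langle x,B_c'(t)x\rangle\right)w_0$, and for the Laplacian, writing $q(t,x):=\frac12\langle x,B_c(t)x\rangle$ so that $w_0=e^{\beta-q}$, one has $\nabla_x w_0 = -(\nabla_x q)\,w_0 = -(B_c x)\,w_0$ and hence $\Delta w_0 = \left(|B_c x|^2 - \mathrm{Tr}(B_c)\right)w_0$. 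Substituting into $\partial_t w_0-\Delta w_0 = \overline\lambda(c)w_0-\frac12 c(t)\langle x,Ax\rangle w_0$ and dividing by $w_0>0$ gives the pointwise identity
\[
\beta'(t)-\tfrac12\langle x,B_c'(t)x\rangle - |B_c(t)x|^2 + \mathrm{Tr}(B_c(t)) = \overline\lambda(c) - \tfrac12 c(t)\langle x,Ax\rangle.
\]
This must hold for all $x$, so I separate it into the quadratic-in-$x$ part and the constant part. The quadratic part reads $-\frac12\langle x,B_c'(t)x\rangle - \langle B_c(t)x,B_c(t)x\rangle = -\frac12 c(t)\langle x,Ax\rangle$, i.e. $\frac12 B_c'(t) + B_c(t)^2 = \frac12 c(t)A$ as symmetric matrices (here I use that $B_c$ is symmetric so $|B_c x|^2 = \langle x, B_c^2 x\rangle$); the constant part reads $\beta'(t)+\mathrm{Tr}(B_c(t)) = \overline\lambda(c)$.

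Next I would diagonalise. Since $A = P\,\mathrm{diag}(\mu_1,\dots,\mu_d)\,P^T$ and we posit $B_c(t) = P\,\mathrm{diag}(\xi_1(t),\dots,\xi_d(t))\,P^T$ with the same fixed $P$, the matrix Riccati equation $\frac12 B_c' + B_c^2 = \frac12 cA$ decouples into the $d$ scalar equations $\frac{\xi_i'(t)}{2} + \xi_i(t)^2 = \frac{\mu_i}{2}c(t)$, each with periodic boundary condition $\xi_i(0)=\xi_i(T)$. After the rescaling $\xi_i = \frac{1}{\sqrt2}\tilde\gamma_i$ and a time reparametrisation these are exactly of the form \eqref{Eq:HJBODE} with right-hand side a nonnegative, nonzero multiple of $c$ (using $c\geq 0$, $c\not\equiv 0$, and $\mu_i>0$), so Theorem \ref{Pr:Uniqueness} applies and furnishes a solution; among the two solutions provided by that theorem we select the strictly positive one $\gamma_f^1>0$, which gives $\xi_i>0$ and ensures $w_0$ is a genuine decaying Gaussian (so $0\le w_0$ and $w_0\to 0$ at infinity, hence $w_0\in$ the relevant space). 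Then $\beta(t) = \int_0^t\big(\overline\lambda(c)-\mathrm{Tr}(B_c(s))\big)ds$ solves the constant-part ODE by construction, and the periodicity requirement $\beta(0)=\beta(T)$ forces $\int_0^T \mathrm{Tr}(B_c(s))\,ds = \overline\lambda(c)\,T$, which is precisely the stated formula $\overline\lambda(c) = \frac1T\int_0^T\mathrm{Tr}(B_c)$. Finally the normalisation $w_0(0,0)=e^{\beta(0)}=1$ is achieved by the choice $\beta(0)=0$, consistent with the formula for $\beta$.

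The main obstacle is not any single computation — the substitution is routine — but rather making sure the invocation of Theorem \ref{Pr:Uniqueness} is airtight: one must check that the scalar Riccati problems genuinely reduce, after an affine change of the unknown and rescaling of time (or a direct comparison argument), to the canonical form $\gamma' + \gamma^2 = f$ with $f\ge0$, $f\not\equiv0$, $f$ periodic, so that a solution exists and the sign-definite branch can be selected; and one should remark that the positivity $\xi_i>0$ is exactly what makes $w_0$ belong to the natural weighted space and makes the ``$w_0\le w_0(0,0)$'' normalisation meaningful. A secondary point worth spelling out is that $B_c(t)$ stays symmetric positive definite for all $t$ (immediate from the diagonal form and $\xi_i>0$), which is what legitimises the identity $|B_c x|^2 = \langle x, B_c^2 x\rangle$ used in the quadratic-part matching. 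Everything else — periodicity of $\beta$, the trace formula for $\overline\lambda(c)$, the normalisation — then falls out mechanically.
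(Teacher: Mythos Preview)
Your proof is correct and follows essentially the same approach as the paper: substitute the Gaussian ansatz, separate the resulting identity into its quadratic-in-$x$ and constant parts to obtain the matrix Riccati equation $\tfrac12 B_c'+B_c^2=\tfrac12 cA$ and the scalar ODE for $\beta$, diagonalise to decouple into scalar Riccati equations, and invoke Theorem~\ref{Pr:Uniqueness} to obtain the positive periodic solutions $\xi_i$. One small simplification: the cleanest reduction of $\tfrac{\xi_i'}{2}+\xi_i^2=\tfrac{\mu_i}{2}c$ to the canonical form $\gamma'+\gamma^2=f$ is the substitution $\gamma=2\xi_i$, $f=2\mu_i c$, with no time reparametrisation required.
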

\begin{proof}[Proof of Lemma \ref{Le:Construction}]
We first look for a solution $(\overline\lambda(c)\,, w_0)$ of \eqref{Eq:EigenGaussian} with $w_0$ writing 
$w_0=e^{\beta(t)-\frac{1}2\langle x,B_c(t)x\rangle}$ for $\gamma$ a function to be determined and $B_c$ a symmetric positive definite matrix-valued function. Direct computations show that, for any such $w_0$ and for any $\overline\lambda$,  $(\overline\lambda,w_0)$ solves \eqref{Eq:EigenGaussian} if, and only if, 
\begin{equation}\label{Eq:LimitHopfCole}
\begin{cases}
\beta'(t)-\frac{1}2\langle x,B_c'(t)x\rangle+\mathrm{Tr}(B_c(t))-\langle x,B_c(t)^2x\rangle=\overline\lambda-\frac{c(t)}2\langle x,Ax\rangle\,, 
\\ \beta\,, B_c\text{ are  $T$-periodic}\,, B_c(t)\in S_d^{++}(\R).
\end{cases}
\end{equation}
In particular, if we can find a solution $B_c$ of the differential equation
\begin{equation}\label{Eq:Ricatti} \frac{B_c'(t)}2+B_c(t)^2=\frac{c(t)}2 A\,, \quad B_c(T)=B_c(0)\,, \quad B_c(\cdot)\in S_d^{++}(\R)\end{equation} then, defining \[\beta:t\mapsto \int_0^t  (\langle \mathrm{Tr}(B_c)\rangle_{[0,T]}-\mathrm{Tr}(B_c))\,, \quad \overline\lambda:=\fint_0^T\mathrm{Tr}(B_c)\] it follows that $( \overline \lambda,w_0)$ is a solution of \eqref{Eq:EigenGaussian}. Let us prove that a solution to \eqref{Eq:Ricatti} exists. We look for $B_c$ in the form 
$B_c=P\mathrm{diag}(\xi_1(t),\dots,\xi_d(t))P^T$ so that \eqref{Eq:Ricatti} rewrites
\begin{equation}\label{Eq:HJBODERicatti}
\forall i\in \{1,\dots,d\}\, \begin{cases}
\frac{\xi_i'(t)}2+\xi_i(t)^2=\frac{\mu_i}2 c(t)\,, 
\\ \xi_i(0)=\xi_i(T)\,, 
\\ \xi_i>0.
\end{cases}\end{equation} From Theorem \ref{Pr:Uniqueness}, $\xi_i$ exists, and is uniquely defined, whence the conclusion.
\end{proof}
\begin{proof}[Proof of Proposition \ref{Pr:UniqueConfining}]
We let $(\overline\lambda_0,w_0)$ be the solution of \eqref{Eq:EigenGaussian} provided by Lemma \ref{Le:Construction}. First of all observe that, if $(\overline \lambda,w)$ is a solution of \eqref{Eq:EigenGaussian} then, since $\Vert w_0\Vert_{L^\infty}\leq Ae^{-A'\Vert x\Vert^2}$ by construction, it follows from integration by parts that $\overline\lambda=\overline\lambda_0$.
Second, let $\overline\eta_0$ be defined as a solution of 
\begin{equation}\begin{cases}
-\partial_t\eta_0-\Delta\eta_0-\nabla\cdot\left(\frac{\n_x w_0}{w_0}\eta_0\right)=0&\text{ in }(0,T)\times \R^d\,, 
\\ \forall t\in (0,T)\,, \int_{\R^d}\eta_0(t,\cdot)=1\,, 
\\ \eta_0\geq 0\,, \eta_0(T,\cdot)=\eta_0(0,\cdot).\end{cases}\end{equation}That such an $\eta_0$ exists follows from the same construction as in Lemma \ref{Le:Construction}: define $\overline z_0$ as the solution of 
\begin{equation}
\begin{cases}
-\partial_t z_0-\Delta z_0=\overline\lambda_0z_0-\frac12c(t)\langle x,Ax\rangle z_0&\text{ in }[0,T]\times \R^d\,, 
\\ z_0(T,\cdot)=z_0(0,\cdot)\,, 
\\ z_0(0,0)=1=\Vert z_0\Vert_{L^\infty}\,,
\\ z_0\geq 0,
\end{cases}\end{equation} which can be done exactly in the same way as in Lemma \ref{Le:Construction}, and set $\eta_0:=w_0z_0$. Observe that $\Vert \eta_0(t,x)\Vert\lesssim Ce^{-A\Vert x\Vert^2}$ for some constant $A$. Now, consider $\p_0:=-\ln(w_0)$ and define the functional 
\[ \mathscr E:\p\mapsto \iint_{(0,T)\times \R^d}\left(\partial_t\p-\Delta \p+\Vert\n\p\Vert^2-\frac{c}2\langle x,Ax\rangle\right)d\eta_0.\] Since $\mathscr E$ is strictly convex in $\p$ and since $\p_0$ is one of its critical points we deduce that
\[ \forall \p\,, \mathscr E(\p)\geq \mathscr E(\p_0)=\overline\lambda_0.\] Furthermore, observe that, for any $\p$, if we set $\overline\p:=\p-\p_0$,
\begin{align*}
\mathscr E(\p)-\mathscr E(\p_0)&=\iint_{(0,T)\times \R^d}\Vert \n\overline\p\Vert^2d\eta_0.
\end{align*}
In particular, let $(\overline\lambda_0,w)$ be a solution of \eqref{Eq:EigenGaussian}. Then, setting $\p=-\ln(w)$ we have 
$ 0=\mathscr E(\p)-\mathscr E(\p_0)=\iint_{(0,T)\times \R^d}\Vert \n\overline\p\Vert^2d\eta_0.$ We deduce that there exists a constant $a$ such that 
$\p=\p_0+a.$ Since $\p(0,0)=\p_0(0,0)=1$ we have $a=0$ and the conclusion follows.
\end{proof}

\subsection{Proof of Theorem \ref{Th:Gaussian}}
\begin{proof}[Proof of Theorem \ref{Th:Gaussian}]
We know from Lemma \ref{Le:Construction} that 
\[ \overline\lambda(c)=\sum_{i=1}^d\langle \xi_{i,c}\rangle_{[0,T]}\] where, for every $i\in\{1,\dots,d\}$, 
\[\begin{cases}
\frac{\xi_{i,c}'(t)}2+\xi_{i,c}(t)^2=\frac{\mu_i}2 c(t)\,, 
\\ \xi_{i,c}(0)=\xi_{i,c}(T)\,, 
\\ \xi_{i,c}>0.
\end{cases}\]Recall that the $\mu_i$ are the eigenvalues of $A=-\n^2 V(0)$. If we can prove that 
\begin{multline}\label{Eq:MonRea}
\forall i\in \{1,\dots,d\}\,,\\\langle \xi_{i,c}\rangle_{[0,T]} \geq \langle \xi_{i, c^\#}\rangle_{[0,T]}\text{ with equality if, and only if, up to translation: }c= c^\#\end{multline} then the conclusion follows. We thus prove \eqref{Eq:MonRea}. With the notations of Theorem \ref{Pr:Uniqueness} we know that 
$\theta_{\mu_i c/2}(\langle\xi_{i,c}\rangle_{[0,T]})=0$ and that, for any given $c$, the map $\theta_{\mu_ic/2}$ is strictly decreasing on $(0;+\infty)$. Furthermore we know from \cite[Theorem 1.2]{zbMATH05834183} that, for any $f$ such that, for any $\tau$, $f(\cdot+\tau)\not\equiv f^\#$, 
for any $ \alpha>0\,, \theta_f(\alpha)>\theta_{f^\#}(\alpha).$ Consequently,  
$ \theta_{\mu_i c^\#/2}(\langle \xi_{i,c}\rangle_{[0,T]})<0$ and thus \eqref{Eq:MonRea} holds. 
\end{proof}

\section{Proof of Theorem \ref{Th:LargeDiffusivity}}

\subsection{Asymptotic behaviour of $(\lambda(c,\mu),u_{c,\mu})$ and limit problem}
For any $m\in L^\infty(\TT)$, we let $\bar\p_m$ be the unique solution of 
\begin{equation}\label{Eq:Acampora}\begin{cases}
\partial_t\bar \p_m-\Delta \bar\p_m=m-\fiint_\TT m&\text{ in } \TT\,, 
\\\bar \p_m(0,\cdot)=\bar\p_m(T,\cdot)\,, 
\\ \fiint_\TT\bar \p_m=0,\end{cases}\end{equation} and we define 
\[ \Lambda(m):=\fiint_\TT \left|\n\bar \p_m\right|^2.\]
The main result of this section is the following:
\begin{proposition}\label{Pr:DAEigenvalue}
There exists a constant $M$ that only depends on $\Vert m\Vert_{L^\infty(\TT)}\,, T$ and $d$ such that
\begin{equation}\label{Eq:DAEigenvalue}
\left| \lambda(m,\mu)+\fiint_\TT m-\frac{\Lambda(m)}{\mu}\right|\leq M\frac{\Lambda(m)}{\mu^{\frac32}}.\end{equation}
\end{proposition}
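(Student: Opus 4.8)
\textbf{Proof plan for Proposition \ref{Pr:DAEigenvalue}.}

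The plan is to set up an asymptotic expansion for the Hopf-Cole transform of the eigenfunction $u_{c,\mu}$ (or more generally $u_{m,\mu}$) as $\mu\to\infty$ and to quantify the error via an energy estimate. First I would rescale: writing $\lambda(m,\mu)=\frac{1}{\mu}\widetilde\lambda$ and using the change of variables $u_{m,\mu}=e^{-\p}$, the eigenvalue equation becomes the Hamilton-Jacobi equation $\partial_t\p-\Delta\p+|\n\p|^2=-\frac{\widetilde\lambda}{\mu}-\frac{m}{\mu}$ (using $\mu\partial_t u-\mu\Delta u=mu+\lambda u$, divide by $\mu$ and Hopf-Cole). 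Equivalently, working directly with the variational formula \eqref{Eq:Variational} of Proposition \ref{Pr:Variational} applied to the operator $\mu\partial_t-\mu\Delta-m$, one has
\[
\lambda(m,\mu)=\min_{\alpha,\,\fiint\alpha^2=1}\left(\mu\fiint_\TT|\n_x\alpha|^2-\fiint_\TT m\alpha^2-\min_{\beta}\fiint_\TT\bigl(-\mu\beta\partial_t\alpha^2+\mu|\n_x\beta|^2\alpha^2\bigr)\right),
\]
so the natural ansatz is $\alpha\equiv 1$ (the leading order, giving $\lambda\approx -\fiint m$) perturbed at order $\mu^{-1/2}$.

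The key computation is the \emph{upper bound}: plug the test pair $\alpha=1+\frac{1}{\mu}\eta$ (suitably renormalised so $\fiint\alpha^2=1$) and $\beta=-\frac{1}{\mu}\bar\p_m$ into the formula, where $\bar\p_m$ solves \eqref{Eq:Acampora}. With $\alpha\equiv 1$ and $\beta=-\bar\p_m/\mu$, the inner term becomes $-\min_\beta\fiint(-\mu\beta\partial_t(1)+\mu|\n\beta|^2)=-\min_\beta\fiint\mu|\n\beta|^2$, which is minimised at $\beta=0$; so a more careful ansatz is needed where $\alpha$ is not constant. The right leading profile is $\alpha^2=1-\frac{2}{\mu}\bar\p_m+o(1/\mu)$: then $\partial_t\alpha^2\approx-\frac{2}{\mu}\partial_t\bar\p_m$, the inner minimisation over $\beta$ is a linear heat-type problem whose optimal value is computed explicitly using \eqref{Eq:Acampora} and integration by parts, yielding a contribution $-\frac{1}{\mu}\fiint|\n\bar\p_m|^2$ at the correct order, while $-\fiint m\alpha^2=-\fiint m+\frac{2}{\mu}\fiint m\bar\p_m=-\fiint m+\frac{2}{\mu}\fiint|\n\bar\p_m|^2$ (again by \eqref{Eq:Acampora} and integration by parts, since $\partial_t\bar\p_m-\Delta\bar\p_m=m-\fiint m$ and $\fiint\bar\p_m=0$). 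Collecting terms gives $\lambda(m,\mu)\le -\fiint m+\frac{\Lambda(m)}{\mu}+O(\mu^{-3/2})$, where the $O(\mu^{-3/2})$ comes from the quadratic-in-$\eta$ remainder terms $\mu\fiint|\n\eta|^2$ with $\eta\sim\mu^{-1/2}$-sized corrections, and the constant depends only on $\|m\|_{L^\infty}$, $T$, $d$ through elliptic/parabolic estimates on $\bar\p_m$.

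For the matching \emph{lower bound} I would use the adjoint/dual structure: either run the same argument on the adjoint eigenvalue problem (which has the same eigenvalue), or use the principal eigenfunction $u_{m,\mu}$ itself as data. Concretely, multiply the equation by $u_{m,\mu}/\mu$ (or by the adjoint eigenfunction) and integrate; a priori Harnack-type bounds give $\|u_{m,\mu}-\fiint u_{m,\mu}\|=O(1/\mu)$ after normalisation, so $u_{m,\mu}=1+\frac{1}{\mu}\chi_\mu$ with $\chi_\mu$ bounded in a suitable Sobolev norm uniformly in $\mu$; plugging this into the exact identity $\lambda(m,\mu)=-\fiint m u_{m,\mu}^2/\fiint u_{m,\mu}^2 + (\text{gradient terms})$ and carefully tracking which terms survive at order $\mu^{-1}$ reproduces $-\fiint m+\Lambda(m)/\mu$ up to $O(\mu^{-3/2})$. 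The main obstacle is precisely this \textbf{uniform-in-$\mu$ control of the first corrector}: one needs that the $O(1/\mu)$ deviation of $u_{m,\mu}$ from a constant is genuinely $O(1/\mu)$ in a strong enough norm ($W^{1,2}$ in space-time, say) with constants depending only on $\|m\|_{L^\infty}$, $T$, $d$ — this requires a quantitative spectral gap / elliptic regularity estimate for the parabolic operator $\partial_t-\Delta$ on $\TT$ applied to $\bar\p_m$, which is where the power $3/2$ (rather than $2$) in the error term originates, reflecting that the second corrector is only controlled in $L^2$ and not pointwise. Once that estimate is in place, the rest is bookkeeping with integration by parts using \eqref{Eq:Acampora}.
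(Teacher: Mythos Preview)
Your plan shares the essential analytic ingredients with the paper's proof --- the Hopf--Cole transform, the corrector $\bar\p_m$ from \eqref{Eq:Acampora}, and a quantitative estimate $\|u_{m,\mu}-1\|=O(1/\mu)$ --- but the variational machinery is different. The paper does \emph{not} use the Heinze-type formula of Proposition~\ref{Pr:Variational}; instead it invokes the Donsker--Varadhan representation
\[
-\lambda(m,\mu)=\max_{\eta\in\mathbb P(\TT)}\ \min_{\phi}\ \fiint_\TT\bigl(\mu(\partial_t\phi-\Delta\phi+|\n\phi|^2)+m\bigr)\,d\eta,
\]
whose saddle point is $(\p_\mu,\eta_\mu)$ with $\eta_\mu=u_\mu v_\mu$ the product of direct and adjoint eigenfunctions. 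The advantage is that a max--min yields \emph{both} inequalities by inserting test pairs: $(\eta\equiv 1,\p_\mu)$ gives the exact identity $-\lambda-\fiint m=\mu\fiint|\n\p_\mu|^2$, while $(\eta_\mu,\bar\p_m/\mu)$ gives $-\lambda-\fiint m\le\frac1\mu\fiint|\n\bar\p_m|^2\eta_\mu$, and the estimate $\|\eta_\mu-1\|_{L^\infty}\le M/\mu$ (your ``Harnack-type bound'', made precise as Lemma~\ref{Le:Carazzato} applied to both $u_\mu$ and $v_\mu$) closes one side immediately. Your Heinze route, being a pure minimum in $\alpha$, only gives upper bounds on $\lambda$ from test functions; your suggestion to ``run the same argument on the adjoint'' does not help, since the adjoint problem has the same eigenvalue and the same min-structure, hence again only upper bounds. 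Your second suggestion --- use the eigenfunction itself and track the exact identity --- is the right fix and is precisely what the paper does for the remaining inequality: setting $z_\mu:=\mu\p_\mu-\bar\p_m$, an $L^2$ energy estimate on the equation for $z_\mu$ gives $\bigl(\fiint|\n z_\mu|^2\bigr)^{1/2}\le M\sqrt{\Lambda(m)/\mu}$, and Cauchy--Schwarz then yields $\bigl|\mu\fiint|\n\p_\mu|^2-\Lambda(m)/\mu\bigr|\le M\Lambda(m)/\mu^{3/2}$. This last energy estimate is exactly the ``uniform-in-$\mu$ control of the first corrector'' you flagged as the main obstacle, and indeed the square-root loss in passing from $L^2$ control of $\n z_\mu$ to the bilinear remainder is where the exponent $3/2$ (rather than $2$) appears. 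So your diagnosis of the difficulty and the role of $\bar\p_m$ are correct; what is missing is the Donsker--Varadhan saddle-point structure that makes the two-sided bound fall out cleanly.
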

\begin{remark}[On the exponent $3/2$]
We could  improve the exponent to 2 at the cost of greater technicality. Nevertheless, this suboptimal exponent is sufficient for our purposes.\end{remark}
\begin{proof}[Proof of Proposition \ref{Pr:DAEigenvalue}]
We work with a fixed $m$ so that we write $u_\mu$ for the solution of 
\begin{equation}\label{Eq:umu} \mu\left(\partial_t u_\mu-\Delta u_\mu\right)=\lambda(m,\mu)u_\mu+mu_\mu\end{equation} endowed with periodic boundary conditions and normalised with 
$ \fiint_\TT u_\mu^2=1.$ 
\textbf{Uniform bounds on $\lambda(m,\mu)$}
First of all, observe that, by writing 
$u_{\mu}:=e^{-\p_\mu}$, then $\p_\mu$ solves
\[ \mu\partial_t \p_\mu-\mu\Delta \p_\mu+\mu|\n\p_\mu|^2=-\lambda(m,\mu)-m,\] endowed with time-space periodic boundary conditions. Integrating this equation on $\TT$, we deduce that there holds
$ \lambda(m)=-\fiint_\TT m-\mu\fiint_\T |\n \p_\mu|^2\leq-\fiint_\TT m\leq \Vert m\Vert_{L^\infty(\TT)}$. Second, taking a point of minimum of $\p_\mu$ in $\TT$ we obtain 
$ \lambda(m)\geq -\Vert m\Vert_{L^\infty(\TT)}$. We  deduce
\begin{equation}\label{Eq:EigenvBoundLargeDiffusivity}
\forall \mu>0\,,|\lambda(m,\mu)|\leq \Vert m\Vert_{L^\infty(\TT)}.
\end{equation}
\textbf{Uniform convergence of $u_\mu$} 
\begin{lemma}\label{Le:Carazzato}There exists $M$ that only depends on $\Vert m\Vert_{L^\infty(\TT)}$ such that for all $\mu$ large enough
\begin{equation}\label{Eq:Carazzato}
\Vert u_\mu-1\Vert_{L^\infty(\TT)}\leq \frac{M}\mu.
\end{equation}\end{lemma}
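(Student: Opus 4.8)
The plan is to read \eqref{Eq:umu}, after dividing by $\mu$, as a linear parabolic equation on the space–time torus with a \emph{small} zeroth–order coefficient: writing
\[\partial_t u_\mu-\Delta u_\mu=g_\mu u_\mu\,,\qquad g_\mu:=\frac{\lambda(m,\mu)+m}{\mu}\,,\]
the bound \eqref{Eq:EigenvBoundLargeDiffusivity} gives $\|g_\mu\|_{L^\infty(\TT)}\le 2\|m\|_{L^\infty(\TT)}/\mu$, and integrating the equation over $\TT$ and using periodicity yields the compatibility identity $\fiint_\TT g_\mu u_\mu=0$. The strategy is then: (a) prove a uniform $L^\infty$ bound $\|u_\mu\|_{L^\infty(\TT)}\le C_0$ for $\mu$ large; (b) split $u_\mu=1+c_\mu+\phi_\mu$, where $\phi_\mu$ is the zero–mean periodic solution of the heat equation with right–hand side $g_\mu u_\mu$ and $c_\mu$ is a constant; (c) estimate $\phi_\mu$ and $c_\mu$ separately. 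Throughout, constants depend only on $\|m\|_{L^\infty}$, $T$, $d$.

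\emph{Step (a): uniform bound.} From the normalisation $\|u_\mu\|_{L^2(\TT)}=\sqrt{T|\T|}$ one runs a standard parabolic bootstrap: from $u_\mu\in L^q$ one gets $\|g_\mu u_\mu\|_{L^q}\le \tfrac{2\|m\|_\infty}{\mu}\|u_\mu\|_{L^q}$, and maximal $L^q$–regularity for $\partial_t-\Delta$ on the periodic cylinder (equivalently, convolution with the time–periodic Green function, which lies in $L^r(\TT)$ for all $r<\tfrac{d+2}{d}$) improves the integrability of $u_\mu$ by the parabolic Sobolev gain; after finitely many steps $u_\mu\in C^0(\TT)$ with a bound depending only on $\|u_\mu\|_{L^2}$, $\|g_\mu\|_{L^\infty}$, $T$, $d$. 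Since $\|g_\mu\|_{L^\infty}\le 2\|m\|_\infty/\mu_0$ for $\mu\ge\mu_0$, this is uniform. (Alternatively, the parabolic Harnack inequality on the compact periodic cylinder applied to the positive solution $u_\mu$ gives $\max u_\mu\le C\min u_\mu$ with $C$ depending only on $\|g_\mu\|_{L^\infty}$, $T$, $d$; combined with $(\min u_\mu)^2\le \fiint_\TT u_\mu^2=1\le(\max u_\mu)^2$ this yields $\|u_\mu\|_{L^\infty}\le C$.)

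\emph{Steps (b)–(c): the decomposition and the estimates.} Because $\fiint_\TT g_\mu u_\mu=0$, there is a unique periodic $\phi_\mu$ with $\partial_t\phi_\mu-\Delta\phi_\mu=g_\mu u_\mu$ and $\fiint_\TT\phi_\mu=0$; then $u_\mu-1-\phi_\mu$ solves the homogeneous heat equation with periodic boundary conditions, hence is constant, equal to $c_\mu:=\fiint_\TT u_\mu-1$, so $u_\mu=1+c_\mu+\phi_\mu$. Convolving with the $L^1(\TT)$ time–periodic Green function (or using maximal regularity and Sobolev embedding) together with Step (a) gives
\[\|\phi_\mu\|_{L^\infty(\TT)}\le C\,\|g_\mu u_\mu\|_{L^\infty(\TT)}\le C\,\frac{2\|m\|_\infty}{\mu}\,C_0=:\frac{C_1}{\mu}\,.\]
Expanding the normalisation and using $\fiint_\TT\phi_\mu=0$ yields $1=\fiint_\TT u_\mu^2=(1+c_\mu)^2+\fiint_\TT\phi_\mu^2$; since $1+c_\mu=\fiint_\TT u_\mu>0$, this forces $1+c_\mu=\sqrt{1-\fiint_\TT\phi_\mu^2}$, so $0\le -c_\mu\le\fiint_\TT\phi_\mu^2\le C_1^2/\mu^2$. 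Combining, $\|u_\mu-1\|_{L^\infty(\TT)}\le\|\phi_\mu\|_{L^\infty}+|c_\mu|\le M/\mu$ for $\mu\ge\mu_0$, which is \eqref{Eq:Carazzato}.

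I expect the main obstacle to be Step (a): one must guarantee that the parabolic regularity (or Harnack) constants do not degenerate as $\mu\to\infty$. This is exactly where it is useful that $\|g_\mu\|_{L^\infty}$ is not merely bounded but $O(1/\mu)$: fixing any $\mu_0>0$, the zeroth–order coefficient is uniformly bounded on the range $\mu\ge\mu_0$, so the classical estimates apply with $\mu$–independent constants. Once Step (a) is in place, Steps (b)–(c) are routine.
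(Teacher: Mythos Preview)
Your argument is correct and is genuinely different from the paper's. The paper proceeds by separating the spatial and temporal oscillations: it first controls $\sup_t\bigl|\fint_\T u_\mu(t,\cdot)-\fiint_\TT u_\mu\bigr|$ via a $BV$ estimate on $t\mapsto\fint_\T u_\mu$, then controls $\bigl\|u_\mu(t,\cdot)-\fint_\T u_\mu(t,\cdot)\bigr\|_{L^\infty(\T)}$ by applying parabolic $W^{1,p}$ regularity to $\hat u_\mu:=u_\mu-\fint_\T u_\mu$, and finally pins $\fint_\T u_\mu(t,\cdot)$ to $1$ using $\fiint_\TT(\partial_t u_\mu)^2\le M/\mu^2$ together with Poincar\'e. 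In effect the paper builds the $L^\infty$ bound and the closeness to $1$ in tandem, slice by slice.

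Your route is more global: once the uniform bound $\|u_\mu\|_{L^\infty}\le C_0$ is in hand, the single identity $u_\mu-\fiint_\TT u_\mu=(\partial_t-\Delta)^{-1}(g_\mu u_\mu)$ on zero--mean functions, combined with the $L^\infty\!\to\! L^\infty$ boundedness of the periodic inverse heat operator, immediately gives $\|\phi_\mu\|_{L^\infty}\le C/\mu$; the constant $c_\mu$ then drops out of the $L^2$ normalisation via the clean orthogonality $\fiint_\TT\phi_\mu=0$. This is shorter and more conceptual, at the price of invoking the periodic Green function (or Harnack) as a black box. The paper's approach is more hands--on and self--contained, relying only on Poincar\'e and standard interior parabolic estimates, and it produces along the way finer intermediate bounds (on $\partial_t\fint_\T u_\mu$ and on $\sup_t\|\nabla_x u_\mu\|_{L^p}$) which, however, are not used elsewhere in the proof of Proposition~\ref{Pr:DAEigenvalue}. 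Your observation that $\|g_\mu\|_{L^\infty}\le 2\|m\|_{L^\infty}/\mu$ is what makes the bootstrap (or Harnack) constants in Step~(a) uniform in $\mu$, and that point is handled correctly.
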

\begin{proof}[Proof of Lemma \ref{Le:Carazzato}]
We first prove that there exists $M_0$ such that, for any $\mu$ large enough
\begin{equation}\label{Eq:BoundIntegral}
\sup_{t\in [0,T]}\int_\T u_\mu(t,\cdot)\leq M_0.
\end{equation}
\begin{proof}[Proof of \eqref{Eq:BoundIntegral}]
Integrate the equation on $u_\mu$ in space,  pass to the absolute value, integrate in time and use the Cauchy-Schwarz inequality and \eqref{Eq:EigenvBoundLargeDiffusivity} to obtain\[ \mu\int_0^T \left|\frac{d}{dt} \fint_\T u_\mu\right|\leq 2\Vert m\Vert_{L^\infty(\TT)}.\] Thus $I:t\mapsto \fint_\T \mu u_\mu$ is uniformly bounded in $BV(0,T)$, whence  
\begin{equation}\label{Eq:Bd1}
\sup_{t\in [0,T]}\left| \fint_\T u_\mu(t,\cdot)-\fiint_\TT u_\mu \right|\leq \frac{M_1}\mu.
\end{equation}
Using the Cauchy-Schwarz inequality again, we deduce that there exists $M_0$ such that \eqref{Eq:BoundIntegral} holds.
\end{proof}

 Observe that \eqref{Eq:BoundIntegral} implies, once again integrating \eqref{Eq:umu} in $\T$, that there exists $M>0$ such that:
\begin{equation}\label{Eq:Carazatto2'} \sup_{t\in [0,T]} \left|\frac{d}{dt}\fint_\T u_\mu\right|\leq \frac{M}\mu.\end{equation} Let us now prove
\begin{equation}\label{Eq:Lunardi}\text{for any $p\in [1;\infty)$ there holds
 $\sup_{t\in [0,T]}\mu\Vert \n_xu_\mu\Vert_{L^p(\T)}\leq M_p<\infty$.}\end{equation}

 Consider the function $\hat u_\mu=u_\mu-\fint_\T u_\mu$. From \eqref{Eq:BoundIntegral} and  \eqref{Eq:Carazatto2'}, there holds
 \[ \partial_t(\mu\hat u_\mu)-\Delta (\mu\hat u_\mu)-m\hat u_\mu-\lambda \hat u_\mu\in L^\infty(\TT).\] From standard parabolic regularity and a bootstrap argument, \eqref{Eq:Lunardi} holds.

In particular, choosing $p$ large enough, we conclude  from the Sobolev embedding $W^{1,p}(\T)\hookrightarrow \mathscr C^0(\T)$ that
\begin{equation}\label{Eq} \sup_{t\in[0,T]}\mu\left\Vert u_\mu(t,\cdot)-\fint_\T u_\mu(t,\cdot)\right\Vert_{L^\infty(\T)}\leq M.\end{equation} From \eqref{Eq:BoundIntegral} and up to modifying $M$ we derive
\begin{equation}\label{Eq:BoundIntegral2}
\Vert u_\mu\Vert_{L^\infty(\TT)}\leq M.\end{equation}
Finally, let us prove that 
\begin{equation}\label{Eq:FinalIntegral}
\sup_{t\in[0,T]} \left|\fint_\T u_\mu(t,\cdot)-1\right|\leq \frac{M}\mu.
\end{equation}
As, by parabolic regularity, 
\[ \fiint_\TT (\partial_t u_\mu)^2\leq \frac{M}{\mu^2}\]the estimate \eqref{Eq:BoundIntegral2} implies that the map
$I_2:t\mapsto \fint_\T u_\mu^2$ satisfies 
\[ \int_0^T (I_2'(t))^2dt\leq \frac{M}{\mu^2}.\] Since
$ \fiint_\TT u_\mu^2=1$  we deduce from the Sobolev embedding $W^{1,2}(0,T)\hookrightarrow \mathscr C^0(0,T)$  that 
\[ \sup_{t\in [0,T]}\left|\fint_\T u_\mu^2-1\right|\leq \frac{M}\mu.\] 
However,  by the Poincar\'e inequality \begin{equation}\label{Eq:Jensen}
\left| \fint_\T u_\mu^2-\left(\fint_\T u_\mu\right)^2\right|\leq \fint_\T |\n_xu_\mu|^2\leq \frac{M}{\mu^2}.
\end{equation}  This implies \eqref{Eq:FinalIntegral}.
Going back to \eqref{Eq} we finally obtain
\[ \sup_{t\in[0,T]} \left\Vert u_\mu(t,\cdot)-1\right\Vert_{L^\infty(\T)}\leq \frac{M}\mu.\]\end{proof}
\textbf{Uniform convergence of the invariant measure associated with the system}

Introduce, for any $\mu>0$, the solution $\eta_\mu$ of 
\begin{equation}\label{Eq:InvariantMeasureLD}\begin{cases}
\mu\left(-\partial_t\eta_\mu-\Delta\eta_\mu-2\nabla\cdot\left(\eta_\mu\frac{\n_xu_\mu}{u_\mu}\right)\right)=0&\text{ in }\TT\,, 
\\ \eta_\mu\geq 0\,, \fiint_\TT \eta_\mu=1.\end{cases}\end{equation}
Let  $v_\mu$ be the solution of the adjoint problem
\begin{equation}\begin{cases}
\mu\left(-\partial_t v_\mu-\Delta v_\mu\right)=\lambda(m,\mu)v_\mu+mv_{\mu}&\text{ in }\TT\,, 
\\ v_{\mu}\geq 0\,, \fiint_\TT v_\mu u_\mu=1.\end{cases}\end{equation}
Then direct computations show that \[\eta_\mu=u_\mu v_\mu.\] Adapting \emph{mutatis mutandis} the proof of \eqref{Eq:Carazzato}, there holds
\[
\Vert v_\mu-1\Vert_{L^\infty(\TT)}\leq \frac{M}\mu.\] Consequently we obtain 
\begin{equation}\label{Eq:Carazatto2}
\Vert \eta_\mu-1\Vert_{L^\infty(\TT)}\leq \frac{M}\mu.
\end{equation}

\textbf{Using the Donskers-Varadhan formula}
Use again the Hopf-Cole transform by writing $u_\mu=e^{-\p_\mu}$, where $\p_\mu$ solves
\[ \mu\left(\partial_t\p_\mu-\Delta\p_\mu+|\n\p_\mu|^2\right)=-\lambda(m,\mu)-m.\] Then it appears that $\lambda(m,\mu)$  admits the variational formulation 
\begin{multline*} -\lambda(m,\mu)=\\\max_{\eta\in \mathbb P(\TT)}\min_{\phi\in L^2(0,T;W^{1,2}(\O))}\fiint_\TT \left(\mu\left(\partial_t\phi-\Delta\phi+|\n\phi|^2\right)+m\right)d\eta\end{multline*}
where $\mathbb P(\TT)$ is the set of probability measures on $\TT$.
Indeed, we can check immediately that $(\p_\mu,\eta_\mu)$ is a solution of this saddle point problem and the uniqueness of a solution follows from the same arguments as in the proof of Theorem \ref{Th:Symmetry1}. We also refer to \cite{arXiv:2409.08740}. In particular, as $(\p_\mu,\eta_\mu)$ is a solution of this saddle point problem, we deduce, using $(\overline\eta\equiv {\mathrm{Vol}(\TT)},\p_\mu)$ and $(\eta_\mu,\frac1{\mu}\overline\p_m)$ (recall that $\overline \p_m$ is defined by \eqref{Eq:Acampora}) as test couples, that we have the estimate
\[\mu\fiint_\TT |\n\p_\mu|^2\leq-\lambda(m,\mu)-\fiint_\TT m \leq \frac1\mu\fiint_\TT |\n\overline\p_m|^2\eta_\mu.
\]Using \eqref{Eq:Carazatto2} this implies that
\begin{equation}\label{Eq:Cito} -\lambda(m,\mu)-\fiint_\TT m -\frac{\Lambda(m)}\mu\leq \frac1{\mu^2}\fiint_\TT|\n\overline \p_m|^2\leq M\frac{\Lambda(m)}{\mu^2}.\end{equation}  To conclude the proof, we thus need to show that 
\begin{equation}\label{Eq:Barbato}\left|\mu\fiint_\TT|\n \p_\mu|^2-\frac{\Lambda(m)}\mu\right|\leq M\frac{\Lambda(m)}{\mu^{\frac32}}.
\end{equation}
Observe that, introducing $\overline\p_\mu:=\mu\p_\mu$, \eqref{Eq:Barbato} is equivalent to proving that 
\begin{equation}\label{Eq:Barbato2} \left|\fiint_\TT |\n\overline\p_\mu|^2-\fiint_\TT|\n\overline\p_m|^2\right|\leq M\frac{\Lambda(m)}{\sqrt{\mu}}.\end{equation} A first observation is that $\overline\p_\mu=-\mu\ln(u_\mu)$ so that \eqref{Eq:Carazzato} implies
\begin{equation}\label{Eq:Masiello} \Vert \overline\p_\mu\Vert_{L^\infty(\TT)}\leq M.\end{equation}
Define $z_\mu:=\overline\p_\mu-\overline\p_m$, which solves
\[\partial_t z_\mu-\Delta z_\mu+\frac1\mu|\n \overline\p_\mu|^2=-\lambda(m,\mu)-\fiint_\TT m.\] Up to replacing $z_\mu$ with $z_\mu+A$ where $A$ is independent of $\mu$  we can without loss of generality assume that 
\[ 0\leq z_\mu\leq M\] for some constant $M$ independent of $\mu$ thanks to \eqref{Eq:Masiello}. Multiplying  by $z_\mu$ and integrating by parts gives
\begin{multline*}\fiint_\T |\n z_\mu|^2\leq \fiint_\TT |\n z_\mu|^2+\frac1\mu\fiint_\TT z_\mu|\n\overline\p_\mu|^2\\\leq \left(-\lambda(m,\mu)-\fiint_\TT m\right)\left(\int_0^T \left\Vert z_\mu-\fint_\T z_\mu\right\Vert_{L^2(\T)}+\Vert z_\mu\Vert_{L^\infty(\TT)}\right).
\end{multline*}
The Jensen inequality, the Poincar\'e inequality and  \eqref{Eq:Cito} yield that, by setting $X_\mu:=\left(\fiint_\TT |\n z_\mu|^2\right)^{\frac12}$, we have 
\[ X_\mu^2\leq \frac{\Lambda(m)}{\mu}X_\mu+\frac{M\Lambda(m)}\mu.\] We deduce that for $\mu$ large enough there holds (for a constant still denoted by $M$)
\[ \left|X_\mu\right|\leq M\sqrt{\frac{\Lambda(m)}\mu}.\] 
Thus
\begin{multline}\label{Eq:Barbato4}
\left(\fiint_\TT |\n \overline\p_\mu|^2\right)^{\frac12}\leq  M\sqrt{\frac{\Lambda(m)}\mu}+\left(\fiint_\T|\n \overline\p_m|^2\right)^{\frac12}\\\leq M\left(\sqrt{\frac{\Lambda(m)}{\mu}}+\sqrt{\Lambda(m)}\right)\leq M\sqrt{\Lambda(m)}.
\end{multline}
Going back to \eqref{Eq:Barbato2},  
\begin{align*}
\left|\fiint_\TT |\n \overline\p_\mu|^2-\fiint_\TT |\n \overline\p_m|^2 \right|&=\left| \fiint_\TT \langle \n z_\mu,\n(\overline\p_\mu+\overline \p_m)\rangle\right|
\\&\leq \left(\fiint_\TT|\n z_\mu|^2\right)^{\frac12}\left(\fiint_\TT|\n \overline \p_\mu+\n\overline\p_m|^2\right)^{\frac12}
\\&\leq M\sqrt{\frac{\Lambda(m)}{\mu}}\sqrt{\Lambda(m)}
\\&\leq M\frac{\Lambda(m)}{\sqrt{\mu}}.
\end{align*}
This concludes the proof of \eqref{Eq:Barbato2} and, consequently, the proof of Proposition \ref{Pr:DAEigenvalue}.
\end{proof}
\subsection{Analysis of the asymptotic problem}
In this section, we go back to our initial problem, so that the potential $m$ writes as $cV$ for a fixed potential $V$. With a slight abuse of notation, we denote by $\lambda(c,\mu)\,, \Lambda(c)\,, \overline \p_c$ the quantities $\lambda(cV,\mu)\,, \Lambda(cV)\,, \overline \p_{cV}$. As we know from Proposition \ref{Pr:DAEigenvalue} that 
\[ \lambda(c,\mu)=-\fiint_\TT m-\frac{\Lambda(c)}{\mu}+\Lambda(c)\underset{\mu \to\infty}o\left(\frac1{\mu}\right)\] we first investigate the maximisation problem
\begin{equation}\label{Eq:Lecce}
\max_{c\in \mathcal C}\Lambda(c).\end{equation} Recall that 
\[ \Lambda(c)=\fiint_\TT |\n\overline\p_c|^2\text{ with }\begin{cases}\partial_t\overline\p_c-\Delta\overline\p_c=c(t)V(x)-\fint_0^T c\cdot\fint_\T V&\text{ in }\TT\,, 
\\ \overline\p_c(0,\cdot)=\overline\p_c(T,\cdot).\end{cases}\]
The main proposition is the following:
\begin{proposition}\label{Pr:LimitProblem}
Assume $V$ is not constant. For any $c\in \mathcal C$ (see Eq. \eqref{Eq:Adm2}), there holds
\[ \Lambda(c)\leq \Lambda(\overline c)\text{ with equality if, and only if, there exists $\tau_0$ such that }c(\tau_0+\cdot)=\overline c.\]
\end{proposition}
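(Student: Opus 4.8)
The plan is to diagonalise $\Lambda$ by a Fourier decomposition in the space variable, which turns the problem into a one‑dimensional rearrangement inequality, and then to pass from the functions equidistributed with $\overline{c}$ to the whole class $\mathcal C$ by a convexity argument. First, expanding $V(x)=\sum_{k\in\Z^d}\widehat V_k e^{ik\cdot x}$ and $\overline{\p}_c(t,x)=\sum_{k}\p_k(t)e^{ik\cdot x}$, the equation defining $\overline{\p}_c$ decouples, each coefficient $\p_k$ with $k\neq 0$ being the unique $T$‑periodic solution of $\p_k'+|k|^2\p_k=\widehat V_k\,c$, so that $\p_k=\widehat V_k\,\psi_{|k|^2}$ where $\psi_\mu$ is the unique $T$‑periodic solution of $\psi_\mu'+\mu\psi_\mu=c$. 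Parseval in $x$ gives $\Lambda(c)=\sum_{k\neq 0}|k|^2|\widehat V_k|^2\,Q_{|k|^2}(c)$ with $Q_\mu(c):=\frac1T\int_0^T\psi_\mu^2$, and expanding $\psi_\mu$ in time, $Q_\mu(c)=\sum_{n\in\Z}|\widehat c_n|^2/(\mu^2+4\pi^2n^2/T^2)=\frac1T\int_0^T\zeta_{c,\mu}\,c$, where $\zeta_{c,\mu}$ is the $T$‑periodic solution of $-\zeta''+\mu^2\zeta=c$. Equivalently $Q_\mu(c)=\frac1T\iint_{[0,T]^2}\mathcal G_\mu(s-t)\,c(s)c(t)\,ds\,dt$, where $\mathcal G_\mu$ is the $T$‑periodic Green function of $-\partial_t^2+\mu^2$, i.e. a positive multiple of $h\mapsto\cosh\!\big(\mu(T/2-d_T(h))\big)$ with $d_T(h)$ the distance from $h$ to $T\Z$; since $\mu>0$, $\mathcal G_\mu$ is positive and strictly symmetric‑decreasing on the circle $\R/T\Z$.

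The core step is to prove that $Q_\mu(c)\le Q_\mu(c^\#)$ for every $c$ and $\mu>0$, with equality (for $\mu$ fixed) if and only if $c$ is a translate of $c^\#$. I would first reduce to $c\ge 0$: for any constant $C$ one has $(c+C)^\#=c^\#+C$, and adding $C$ only changes the $n=0$ Fourier mode, so $Q_\mu(c+C)-Q_\mu((c+C)^\#)=Q_\mu(c)-Q_\mu(c^\#)$; taking $C$ large we may thus assume $c\ge 0$. Then, $\mathcal G_\mu\ge 0$ being symmetric decreasing, the Riesz rearrangement inequality on the circle gives $\iint\mathcal G_\mu(s-t)c(s)c(t)\,ds\,dt\le\iint\mathcal G_\mu(s-t)c^\#(s)c^\#(t)\,ds\,dt$, that is $Q_\mu(c)\le Q_\mu(c^\#)$; the strict monotonicity of $\mathcal G_\mu$ together with the rigidity in Riesz's inequality forces $c$ to coincide with a translate of its symmetric decreasing rearrangement in the equality case. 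Summing over $k\neq 0$ with the weights $|k|^2|\widehat V_k|^2\ge 0$ gives $\Lambda(c)\le\Lambda(c^\#)$; and since $V$ is not constant there is $k_0\neq 0$ with $\widehat V_{k_0}\neq 0$, whence $\Lambda(c)=\Lambda(c^\#)$ forces $Q_{|k_0|^2}(c)=Q_{|k_0|^2}(c^\#)$ and therefore $c$ a translate of $c^\#$.

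It remains to pass from $\{c:c^\#=\overline{c}\}$ to all of $\mathcal C$. From the first paragraph, $\Lambda(c)=\sum_{n}M_n|\widehat c_n|^2$ with $M_n=\sum_{k\neq 0}|k|^2|\widehat V_k|^2/(|k|^4+4\pi^2n^2/T^2)>0$ for every $n$ (using again that $V$ is not constant), so $\Lambda$ is a positive‑definite quadratic form on $L^2_{\mathrm{per}}((0,T))$, hence strictly convex; moreover $M_n\to 0$, so $\Lambda$ is weak‑$*$ continuous on the bounded set $\mathcal C$. By the Bauer maximum principle $\Lambda$ attains its maximum over the convex, weak‑$*$ compact set $\mathcal C$ at an extreme point, and by strict convexity only at extreme points; by Remark~\ref{Re:Topological} these are exactly the $c$ with $c^\#=\overline{c}$. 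For such $c$, the second paragraph gives $\Lambda(c)\le\Lambda(c^\#)=\Lambda(\overline{c})$, and since $\overline{c}$ is itself such an extreme point one concludes $\max_{\mathcal C}\Lambda=\Lambda(\overline{c})$, which is the asserted inequality. If $\Lambda(c)=\Lambda(\overline{c})$ for some $c\in\mathcal C$, then $c$ is a maximiser, hence an extreme point, hence $c^\#=\overline{c}$, and then $\Lambda(c)=\Lambda(c^\#)$ forces, by the second paragraph, that $c$ is a translate of $\overline{c}$; conversely $\Lambda$ is invariant under time translations of $c$, so equality holds precisely for the translates of $\overline{c}$.

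The delicate point — the one I expect to require the most care — is the one‑dimensional ``Talenti/Riesz'' inequality $Q_\mu(c)\le Q_\mu(c^\#)$ for the periodic operator $-\partial_t^2+\mu^2$: one has to recast the first‑order periodic ODE for $\psi_\mu$ in self‑adjoint form, handle sign‑changing data (whence the shift $c\mapsto c+C$), and above all obtain the equality case, which relies on the strict symmetric monotonicity of the Green kernel $\mathcal G_\mu$ and on the rigidity in the periodic Riesz rearrangement inequality. The passage from $\{c:c^\#=\overline{c}\}$ to all of $\mathcal C$ is, by contrast, routine once strict convexity and the description of the extreme points of $\mathcal C$ (Remark~\ref{Re:Topological}) are in hand.
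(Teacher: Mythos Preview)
Your argument is correct and follows the paper's overall strategy: both proofs expand $V$ in spatial Fourier modes, reduce $\Lambda$ to a sum of one-dimensional quantities $\int_0^T\alpha_\mu^2$ with $\alpha_\mu'+\mu\alpha_\mu=c$, and then pass to the self-adjoint second-order form $-\zeta''+\mu^2\zeta=c$ (your $\zeta_{c,\mu}$ is the paper's $\beta_{k,c}$), noting that $\int\alpha_\mu^2=\int\zeta\,c$. The proofs diverge only in the rearrangement tool applied at this point: the paper writes $\int\zeta\,c=-2E(\zeta,c)$ for the Dirichlet energy $E$ and applies the P\'olya--Szeg\H{o} and Hardy--Littlewood inequalities to $E$, deducing rigidity from P\'olya--Szeg\H{o}; you instead express $\int\zeta\,c$ as a convolution against the explicit periodic Green kernel $\mathcal G_\mu$ of $-\partial_t^2+\mu^2$ and invoke the Riesz rearrangement inequality on the circle together with its rigidity for strictly decreasing kernels. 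Both routes are standard and give the same conclusion $Q_\mu(c)\le Q_\mu(c^\#)$ with the same equality case. Your final step---passing from $\{c:c^\#=\overline c\}$ to all of $\mathcal C$ via strict convexity of the quadratic form $\Lambda$ and Bauer's maximum principle---is a clean addition that the paper handles less explicitly (the displayed chain in the paper's proof ends at $\Lambda_k(c^\#)$ rather than $\Lambda_k(\overline c)$, and one implicitly needs Hardy--Littlewood in its majorization form, $\int\beta^\#c^\#\le\int\beta^\#\overline c$ whenever $c\preceq\overline c$, to complete the argument there).
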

\begin{proof}[Proof of Proposition \ref{Pr:LimitProblem}]
Decompose $V$ in the basis $(\lambda_k,\psi_k)_{k\in \N}$ of Laplace eigenfunctions of $\T$ as 
\[V(x)=\sum_{k=0}^\infty a_k\psi_k.\]
The function $\overline\p_c$ writes 
\[ \overline\p_c=\sum_{k=0}^\infty a_k \alpha_{k,c}(t)\psi_k(x)\text{ where }\begin{cases}\alpha_{0,c}:t\mapsto \int_0^t(c-\langle c\rangle)\,, 
\\ \forall k\geq 1\,, \alpha_{k,c}'(t)+\lambda_k\alpha_{k,c}(t)=c(t)&\text{ in }[0,T]\,, 
\\ \alpha_{k,c}(0)=\alpha_{k,c}(T).\end{cases} \]
In particular, 
\[ \Lambda(c)=\sum_{k=1}^\infty \lambda_ka_k^2\fint_0^T \alpha_{k,c}^2(t)dt.\] We introduce, for any $k\geq 1$, 
\[ \Lambda_k(c):=\fint_0^T \alpha_{k,c}^2(t)dt\] and we investigate the optimisation problem 
\[ \max_{c\in\mathcal C}\Lambda_k(c).\] Let us show that, for any $k\in \N$, 
\begin{equation}\label{Eq:Intermediate} \Lambda_k(c)\leq \Lambda_k(\overline c)\text{ with equality if, and only if, there exists $\tau_0$ such that }c(\tau_0+\cdot)=\overline c.\end{equation}
\begin{proof}[Proof of \eqref{Eq:Intermediate}]
We rewrite $\Lambda_k$ as the Dirichlet energy of a certain elliptic problem. For any $k\in \N$, let $\beta_{k,c}$ be the solution of 
\begin{equation}\label{Eq:Betak}
\begin{cases}
-\beta_{k,c}'(t)+\lambda_k\beta_{k,c}(t)=\alpha_{k,c}(t)&\text{ in }[0,T]\,, 
\\ \beta_{k,c}(0)=\beta_{k,c}(T).\end{cases}\end{equation}
Multiplying \eqref{Eq:Betak} by $\alpha_{k,c}$ and integrating by parts we obtain 
\[ \Lambda_k(c)=\fint_0^T \alpha_{k,c}^2(t)=\fint_0^T \alpha_{k,c}(-\beta_{k,c}'+\lambda_k\beta_{k,c})=\fint_0^T \beta_{k,c}(\alpha_{k,c}'+\lambda_k\alpha_{k,c})=\fint_0^T\beta_{k,c}c.\]
Now, observe that 
\[ -\beta_{k,c}''+\lambda_k^2\beta_{k,c}=\left(\frac{d}{dt}+\lambda_k\right)\left(-\frac{d}{dt}+\lambda_k\right)\beta_{k,c}=c.\] Consequently, $\beta_{k,c}$ solves 
\[ \min_{\beta \text{ $T$-periodic, }\beta\in W^{1,2}(0,T)}\left(E(\beta,c):=\frac12\fint_0^T(\beta')^2+\frac{\lambda_k^2}2\fint_0^T\beta^2-\fint_0^T \beta c\right)\] and 
\[ \fint_0^T \beta_{k,c} c=-2E(\beta_{k,c},c).\]
Thus, \eqref{Eq:Intermediate} is equivalent to 
\begin{equation}\label{Eq:Intermediate2}
\min_{c} E(\beta_{k,c},c)=\min_{\beta,c}E(\beta,c).\end{equation} Introduce the increasing periodic rearrangement $f_\#$ of a function $f$ (it is the same notion as the one given in Definition \ref{De:Rearrangement} with the requirement that $f^\#$ be decreasing replaced with the requirement that it should be increasing), and that the Poly\'a-Szeg\"{o}  and Hardy-Littlewood inequalities are also valid for the periodic rearrangement: for any $f\in W^{1,2}(0,T)$ and any $f\,, g \in W^{1,2}(0,T)\,, f\,, g \geq 0$, there holds 
\begin{equation}\label{Eq:PS}
\fint_0^T (f')^2\geq \fint_0^T \left(\left(f_\#\right)'\right)^2\,, \fint_0^T fg\geq \fint_0^T f_\#g^\#.\end{equation} In particular, this implies, for any $(\beta,c)$
\[2 E(\beta,c)\geq 2E(\beta_\#,c^\#)\geq -\Lambda_k( c^\#).\] 

This gives 
\[ \forall c\in\mathcal C\,, \Lambda_k(c)\leq \Lambda_k( c^\#).\] Now observe that  
\[ \Lambda_k(c)=\Lambda_k(c^\#),\] implies equality in \eqref{Eq:PS} for $f=\beta_{k,c}$ and that $\beta_{k,c}^\#=\beta_{k,\overline c}$. Since the Poly\'a-Szeg\"o inequality is rigid, this implies that there exists $\tau_0>0$ such that 
\[\beta_{k,c}(\tau_0+\cdot)=\beta_{k,c}^\#.\]  Using the equation satisfied by $\beta_{k,c}$, this proves that $c(\tau_0+\cdot)= c^\#$, thereby yielding \eqref{Eq:Intermediate}.\end{proof}
\noindent\textbf{Back to the proof of Proposition \ref{Pr:LimitProblem}}
We know that 
\[ \Lambda(c)=\sum_{k=1}^\infty\lambda_ka_k^2\Lambda_k(c).\] From \eqref{Eq:Intermediate} we deduce that 
\[ \Lambda(c)\geq \Lambda( c^\#).\] If we have equality, then there holds, for any $k\in \N$, 
\[ a_k^2 \Lambda_k(c)=a_k^2\Lambda_k( c^\#).\] Since $V$ is not constant, there exists $k\geq 1$ such that $a_k\neq 0$, and, for this $k$, \eqref{Eq:Intermediate} implies that, up to a translation by $\tau_0$ (which, without loss of generality, we take to be zero), $c= c^\#$.
\end{proof}

\section{Proof of Theorem \ref{Th:SmallDiffusivity}}
Recall that we work with the following eigenvalue problem (where, up to a translation we assume that the eigenfunction reaches its maximum at $(t^*,x^*)=(0,0)$)
\[\begin{cases}
\e \partial_t u_{\e,V}-\e^2\Delta u_{\e,V}=\lambda_\e(V)u_{\e,V}+c(t)V(x)u_{\e,V}&\text{ in }\TT\,, 
\\ u_{\e,V}(0,0)=1=\Vert u_{\e,V}\Vert_{L^\infty(\TT)}\,, 
\\ u_{\e,V}(T,\cdot)=u_{\e,V}(0,\cdot).\end{cases}
\]
We assume  that $\overline c\geq 0$ so that, for any $c\in \mathcal C$, we have $c\geq 0$.
Our goal is to investigate the asymptotic behaviour of $\lambda_\e(V)$ as $\e\to 0$ and, more specifically, to prove the following proposition:
\begin{proposition}\label{Pr:BlowUp}
Let $A:=-\n^2V(0)\in S_d^{++}(\R)$ and, for any $c\in \mathcal C$, let $\overline\lambda(c)$ be the principal parabolic, periodic in time eigenvalue of 
$\partial_t-\Delta+\frac{c}2\langle \cdot,A\cdot\rangle$ in $\R^d$ (see \eqref{Eq:EigenGaussian}). Then
\[\frac{\lambda_\e(c)+\langle c\rangle V(0)}\e\underset{\e\to 0}\rightarrow \overline\lambda(c).\]
\end{proposition}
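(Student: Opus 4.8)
The plan is to exploit the singular perturbation structure via the Hopf-Cole transform, exactly as in the classical Agmon-type analysis of semiclassical ground states. First I would set $u_\e:=e^{-\p_\e/\e}$, so that $\p_\e\geq 0$ solves the viscous Hamilton-Jacobi equation
\[
\partial_t\p_\e - \e\Delta\p_\e + |\n\p_\e|^2 = -\lambda_\e(c) - c(t)V(x)\quad\text{in }\TT,
\]
with $\p_\e$ time-space periodic and $\min_\TT\p_\e=\p_\e(0,0)=0$. Integrating over $\TT$ gives the two-sided bound $|\lambda_\e(c)|\leq\|cV\|_{L^\infty}$, hence $\lambda_\e(c)\to\lambda_*$ along subsequences; evaluating at the minimum point of $\p_\e$ and using $c\geq0$ together with the fact that $0$ is the strict maximiser of $V$ (hypothesis \eqref{Hyp:VNnDeg}) forces $\lambda_*=-\langle c\rangle V(0)$ to leading order, which already pins down $\lambda_\e(c)+\langle c\rangle V(0)\to 0$. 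The content of the proposition is the \emph{rate}: one rescales near the maximiser by setting $x=\sqrt\e\, \zeta$ and $v_\e(t,\zeta):=u_\e(t,\sqrt\e\,\zeta)=e^{-\p_\e(t,\sqrt\e\zeta)/\e}$. A Taylor expansion $V(\sqrt\e\zeta)=V(0)-\tfrac\e2\langle\zeta,A\zeta\rangle+o(\e)$ turns the eigenvalue equation into
\[
\partial_t v_\e - \Delta_\zeta v_\e = \frac{\lambda_\e(c)+c(t)V(0)}{\e}\,v_\e \;+\; \Big(-\tfrac{c(t)}2\langle\zeta,A\zeta\rangle + o(1)\Big) v_\e,
\]
so that, writing $\Lambda_\e:=\big(\lambda_\e(c)+\langle c\rangle V(0)\big)/\e$ and absorbing the oscillatory part $c(t)-\langle c\rangle$ of the zeroth-order term into a bounded drift (or keeping it — it is exactly the term producing the time-periodic structure of \eqref{Eq:EigenGaussian}), the rescaled problem converges to the Gaussian eigenvalue problem $\partial_t w_0-\Delta w_0=\overline\lambda(c)w_0-\tfrac{c}2\langle\zeta,A\zeta\rangle w_0$ studied in Section \ref{sec:w0}.

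To make this rigorous I would proceed in two halves. For the \emph{upper bound} $\limsup_{\e\to0}\Lambda_\e\leq\overline\lambda(c)$, I use the min-characterisation of $\lambda_\e$ coming from the Hopf-Cole/Donsker-Varadhan duality (as in the proof of Theorem \ref{Th:LargeDiffusivity} and Proposition \ref{Pr:UniqueConfining}): plug the rescaled Gaussian-type test function $\phi(t,x):=\e\,\p_{0}(t,x/\sqrt\e)$ built from the solution $w_0=e^{-\p_0}$ of \eqref{Eq:EigenGaussian}, truncated far from $0$ so as to be periodic, and estimate the resulting Rayleigh-type quotient, the truncation error being exponentially small because $w_0$ has Gaussian decay (Lemma \ref{Le:Construction}). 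For the \emph{lower bound} $\liminf_{\e\to0}\Lambda_\e\geq\overline\lambda(c)$, I would use the invariant-measure/dual eigenfunction $\eta_\e=u_\e v_\e$, establish a Gaussian concentration estimate $\eta_\e(t,x)\lesssim \e^{-d/2}e^{-c|x|^2/\e}$ (this is where I build on the geometric estimates of \cite{zbMATH05041278} and hypothesis \eqref{Hyp:VNnDeg}), rescale, extract a limit $\overline\eta$ which is a nonnegative solution of the adjoint Gaussian problem with unit mass, and pass to the limit in the weak (variational) formulation; the uniqueness statement in Proposition \ref{Pr:UniqueConfining} then identifies the limit of $\Lambda_\e$ with $\overline\lambda(c)$ and upgrades subsequential convergence to full convergence.

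The main obstacle is the lower bound, specifically obtaining \emph{uniform} Gaussian tail control on $\p_\e$ (equivalently on $u_\e$ and on the invariant measure $\eta_\e$) away from the maximiser $x=0$. Near $0$ the quadratic nondegeneracy $-\n^2V(0)=A\in S_d^{++}$ gives the right local picture, but globally on $\T$ one must rule out mass escaping to the level set $\{V=V(0)\}$; since \eqref{Hyp:VNnDeg} guarantees $0$ is the \emph{only} local maximiser, a barrier/Agmon-distance argument of the type in \cite{zbMATH05041278} yields $\p_\e(t,x)\geq \kappa\, d_{\mathrm{Agmon}}(0,x)^2 - o(1)$ uniformly in $t$, but the time-periodicity (no initial condition to anchor the estimate) and the merely bounded, possibly discontinuous coefficient $c\in\mathcal C$ require some care — one works with $\langle c\rangle$-averaged quantities and a Harnack/parabolic-regularity bootstrap as in Lemma \ref{Le:Carazzato} to control the $x$-oscillations. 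Once the concentration is in hand, the passage to the limit and the identification with \eqref{Eq:EigenGaussian} are routine, using Proposition \ref{Pr:UniqueConfining}.
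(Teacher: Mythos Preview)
Your plan is workable but considerably heavier than the paper's argument, and the ``main obstacle'' you identify --- uniform Gaussian tail control on $\eta_\e$ via Agmon-type estimates --- is precisely what the paper manages to avoid. The paper's proof is a direct compactness argument: after reducing to $V(0)=\max V=0$ (via the periodic primitive $\gamma_\e$ of $c-\langle c\rangle$, which absorbs the constant $\langle c\rangle V(0)$ into the eigenvalue), one gets the two-sided a~priori bound $0\leq\lambda_\e/\e\leq C$. The lower bound is the maximum-principle step you mention; the upper bound is obtained not by plugging a Gaussian test function into a Donsker--Varadhan formula, but by the one-line comparison $c(t)V(x)\geq\Vert c\Vert_{L^\infty}V(x)$ (recall $V\leq0$, $c\geq0$), which reduces to the \emph{time-independent} Schr\"odinger eigenvalue and invokes the cited estimate of Proposition~\ref{Pr:Holcman}. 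This is the only place \cite{zbMATH05041278} is used --- for the scalar bound, not for Agmon distances. With $\Lambda_\e\in[0,C]$ in hand, the rescaled eigenfunction $w_\e(t,\zeta)=u_\e(t,\sqrt\e\,\zeta)$ already satisfies $0\leq w_\e\leq1$ by the $L^\infty$-normalisation, so ordinary interior parabolic estimates give locally uniform subsequential convergence to some bounded $w_0\geq0$ with $w_0(0,0)=1$ solving \eqref{Eq:LimitEigenProblem}. The uniqueness statement of Proposition~\ref{Pr:UniqueConfining} (which only needs boundedness and nonnegativity of $w_0$, not Gaussian decay) then pins down the limit eigenvalue as $\overline\lambda(c)$ and upgrades subsequential to full convergence.

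Two concrete remarks on your plan. First, your claim that ``evaluating at the minimum point \ldots\ forces $\lambda_*=-\langle c\rangle V(0)$'' is not justified as written: at the minimum $(t^*,x^*)$ of $\p_\e$ you only get $\lambda_\e\geq -c(t^*)V(x^*)$, and there is no reason $c(t^*)=\langle c\rangle$ or $x^*=0$ before you have done the reduction to $V(0)=0$. You need that reduction (or the comparison trick above) to get $\Lambda_\e\geq0$; without it your lower-bound argument on $\liminf\Lambda_\e$ has nothing to anchor the rescaled invariant-measure limit. Second, and more importantly, the invariant-measure concentration step is simply unnecessary: since Proposition~\ref{Pr:UniqueConfining} characterises $\overline\lambda(c)$ by the existence of a \emph{bounded} nonnegative eigenfunction, the trivial bound $0\leq w_\e\leq1$ plus local compactness already delivers both inequalities at once. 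Your Donsker--Varadhan upper bound and Agmon lower bound would each reprove half of what compactness plus uniqueness gives for free.
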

We refer to Proposition \ref{Pr:UniqueConfining} for the well-posedness of \eqref{Eq:EigenGaussian}. We single Proposition \ref{Pr:BlowUp} out as it is in itself interesting; however, as we will be working with a sequence of minimisers $\{c_\e\}_{\e\to 0}$ we also need a uniform version of this proposition (the proof of which is a minor adaptation of the proof of Proposition \ref{Pr:BlowUp}, see Remark \ref{Re:CvMin}).
 \begin{proposition}\label{Pr:CvMin}
With the same notations as in Proposition \ref{Pr:BlowUp}, let $\{c_\e\}_{\e\to 0}$ be a sequence of elements in $\mathcal C$ that weakly converges to some $c_\infty$. Then
\[\frac{\lambda_\e(c_\e)+V(0)\fint_0^T c_\e}\e\underset{\e\to 0}\rightarrow \overline\lambda(c_\infty)\] where $\overline\lambda(\cdot)$ is defined as the first eigenvalue of \eqref{Eq:EigenGaussian}.

\end{proposition}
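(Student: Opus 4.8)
The plan is to follow the proof of Proposition \ref{Pr:BlowUp} line by line with the fixed $c$ replaced by the sequence $c_\e$, checking that every estimate there depends on $c$ only through $\|c\|_{L^\infty}$, hence is uniform over $\mathcal C$. This is legitimate because $\mathcal C$ is bounded in $L^\infty$ (the domination $c\preceq\bar c$ forces $\operatorname{essinf}\bar c\le c\le\operatorname{esssup}\bar c$) and because $\bar c\ge0$ forces $c\ge0$ for every $c\in\mathcal C$. Recall the structure of that proof: a time-dependent Hopf--Cole gauge $u_{\e,V}=e^{\psi_{c_\e}(t)/\e}\,\widehat u_\e$, with $\psi_{c_\e}$ the $T$-periodic primitive of $V(0)\big(c_\e-\fint_0^T c_\e\big)$, removes the oscillatory part of the potential and merely shifts the eigenvalue, turning $c_\e(t)V(x)$ into $c_\e(t)(V(x)-V(0))+V(0)\fint_0^T c_\e$; one then performs the parabolic blow-up $U_\e(t,y):=\widehat u_\e(t,\sqrt\e\,y)$ around the unique nondegenerate maximiser $x=0$ of $V$. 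Since $\nabla V(0)=0$ and $V\in\mathscr C^2$, the function $U_\e$ solves
\[ \partial_t U_\e-\Delta_y U_\e=\Big(\mu_\e-\tfrac12 c_\e(t)\langle y,Ay\rangle+\mathrm o_\e(|y|^2)\Big)U_\e,\qquad A:=-\nabla^2V(0),\quad \mu_\e:=\frac{\lambda_\e(c_\e)+V(0)\fint_0^T c_\e}{\e}, \]
where $\mathrm o_\e(|y|^2)\to0$ locally uniformly in $y$ and, crucially, $\mu_\e$ is a genuine constant.

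The a priori inputs, all uniform in $c\in\mathcal C$, are: (i) the two-sided bound $\big|\lambda_\e(c)+V(0)\fint_0^T c\big|\le C\e$ — upper bound by testing the eigenvalue problem against a rescaled copy of the Gaussian eigenfunction $w_0$ of \eqref{Eq:EigenGaussian}, lower bound from the sub-solution together with the geometric estimates of \cite{zbMATH05041278} — which keeps $(\mu_\e)$ bounded; and (ii) the Agmon-type concentration estimate of \cite{zbMATH05041278}, which uses $c_\e\ge0$ (so that $c_\e V$ attains its spatial maximum at $x=0$ for every $t$) and provides uniform bounds together with Gaussian decay in $y$ for $U_\e$. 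As the $U_\e$-equation is uniformly parabolic with locally bounded right-hand side and $U_\e(0,0)=1$, interior parabolic regularity gives $C^{1+\alpha/2,\,2+\alpha}_{\mathrm{loc}}$ bounds, hence precompactness of $(U_\e)$ in $C^0_{\mathrm{loc}}([0,T]\times\R^d)$; estimate (ii) moreover prevents loss of mass at $|y|=\infty$. Along a subsequence $\mu_\e\to\mu_\infty$ and $U_\e\to U_\infty$ with $U_\infty\ge0$, $U_\infty(0,0)=1$, $U_\infty$ $T$-periodic and Gaussian-decaying.

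To pass to the limit in the weak formulation of the $U_\e$-equation, the only term depending on $c_\e$ nonlinearly is $\iint c_\e(t)\langle y,Ay\rangle\,U_\e\,\varphi$ for $\varphi$ compactly supported in $y$: since $U_\e\to U_\infty$ uniformly on $\operatorname{supp}\varphi$, the inner $y$-integral is a continuous function of $t$ converging uniformly on $[0,T]$, so this is a weak-$\times$-strong pairing and converges to $\iint c_\infty(t)\langle y,Ay\rangle\,U_\infty\,\varphi$; all other terms converge at once. Thus $(\mu_\infty,U_\infty)$ is a nonnegative, nonzero, $T$-periodic eigenpair of \eqref{Eq:EigenGaussian} with multiplier $c_\infty$; since $U_\infty$ has Gaussian decay, the Donsker--Varadhan uniqueness argument behind Proposition \ref{Pr:UniqueConfining} applies and forces $\mu_\infty=\overline\lambda(c_\infty)$. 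As this limit is the same for every subsequence, the whole family $\big(\lambda_\e(c_\e)+V(0)\fint_0^T c_\e\big)/\e$ converges to $\overline\lambda(c_\infty)$, which is the claim. (One can also check, from the Riccati ODEs of Lemma \ref{Le:Construction} together with Arzel\`a--Ascoli and weak-$*$ convergence, that $c\mapsto\overline\lambda(c)$ is weak-$*$ continuous on $\mathcal C$; this is not needed here but is what makes the optimisation step downstream go through.)

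I expect the main difficulty to be of a bookkeeping nature rather than conceptual: one must ensure that the eigenvalue bound (i) and, above all, the geometric/Agmon concentration estimates (ii) borrowed from \cite{zbMATH05041278} depend on $c$ only through $\|c\|_{L^\infty}$ and on $V$, so that they hold uniformly over $\mathcal C$, and that they are quantitative enough to guarantee that the blow-up limit $U_\infty$ is non-degenerate and decays fast enough for Proposition \ref{Pr:UniqueConfining} to apply. Once this uniformity is secured, the passage to the limit reduces to the weak-$\times$-strong observation above.
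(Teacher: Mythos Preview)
Your approach is essentially the paper's: the authors do not write out a separate proof of Proposition~\ref{Pr:CvMin} but defer to Remark~\ref{Re:CvMin}, which says precisely that one reruns the proof of Proposition~\ref{Pr:BlowUp} with $c_\e$ in place of $c$, noting that the rescaled potential $c_\e V(\sqrt{\e}\,\cdot)/\e$ converges weakly to $-\tfrac12 c_\infty\langle A\cdot,\cdot\rangle$ and that the a~priori estimates only depend on $\|c\|_{L^\infty}$. Your weak--strong pairing argument for the term $\iint c_\e\langle y,Ay\rangle U_\e\varphi$ is exactly the right way to make that remark precise.

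One small simplification: you invoke Agmon-type concentration to obtain Gaussian decay of the blow-up limit $U_\infty$, but the paper does not need this. The uniqueness statement in Proposition~\ref{Pr:UniqueConfining} (and its proof via the Donsker--Varadhan functional) only uses that the competing eigenfunction is bounded, non-negative and normalised by $U_\infty(0,0)=1$; the Gaussian decay enters only for the \emph{reference} eigenfunction $w_0$ built explicitly in Lemma~\ref{Le:Construction}, which carries the integration by parts. So you can drop item~(ii) and the decay discussion without loss.
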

To prove Proposition \ref{Pr:BlowUp}, we rely on a blow-up result for eigenvalues of elliptic operators \cite{zbMATH05041278}.
\begin{proposition}\label{Pr:Holcman}\cite[Proposition 2]{zbMATH05041278}
Let $\Phi\in \mathscr C^2(\T)$ be such that any local minimiser is non-degenerate and, for any $\e>0$, let $\bar\lambda_\e(\Phi)$ be the first eigenvalue of the operator $-\e^2\Delta +\Phi$ on $\T$. There exists a constant $\Lambda$ that only depends on $\Vert \Phi\Vert_{\mathscr C^2}$ and $\T$ such that there holds 
\[ \min_\T \Phi\leq \bar \lambda_\e(\Phi)\leq \min_\T \Phi+\Lambda \e.\]
\end{proposition}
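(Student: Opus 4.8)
The plan is to prove Proposition \ref{Pr:Holcman} by the standard variational method for semiclassical ground states. Since $-\e^2\Delta+\Phi$ is self-adjoint on $L^2(\T)$ with compact resolvent, its first eigenvalue admits the Rayleigh-quotient characterisation
\[
\bar\lambda_\e(\Phi)=\min_{u\in W^{1,2}(\T)\,,\ \int_\T u^2=1}\left(\e^2\int_\T|\n u|^2+\int_\T\Phi\,u^2\right).
\]
The lower bound is then immediate: for any admissible $u$ one discards the non-negative Dirichlet term and bounds $\int_\T\Phi\,u^2\geq(\min_\T\Phi)\int_\T u^2=\min_\T\Phi$, so $\bar\lambda_\e(\Phi)\geq\min_\T\Phi$ for every $\e>0$.

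The substance of the statement is the upper bound, which I would obtain by inserting a Gaussian bump concentrated at a minimiser of $\Phi$. Let $x_0\in\T$ be a point where $\Phi$ attains its minimum; since $\T$ has no boundary and $\Phi\in\mathscr C^2$, $x_0$ is a critical point, so Taylor's formula in a fixed coordinate ball $B(x_0,r_0)$ (with $r_0$ depending only on $\T$) gives the one-sided bound $\Phi(x)-\Phi(x_0)\leq\tfrac12\Vert\n^2\Phi\Vert_{L^\infty(\T)}|x-x_0|^2$ on $B(x_0,r_0)$. Fixing a smooth cut-off $\chi$ equal to $1$ on $B(x_0,r_0/2)$ and supported in $B(x_0,r_0)$, I would test the Rayleigh quotient with $u_\e(x):=\chi(x)\exp(-|x-x_0|^2/(2\e))$. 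Computing the relevant Gaussian moments gives $\int_\T u_\e^2\gtrsim\e^{d/2}$, $\int_\T|x-x_0|^2u_\e^2\lesssim\e\int_\T u_\e^2$ and $\e^2\int_\T|\n u_\e|^2\lesssim\e\int_\T u_\e^2$, the contribution of $\n\chi$ being supported where $u_\e^2\leq e^{-r_0^2/(4\e)}$ and hence negligible; combined with the Taylor bound on $\mathrm{supp}\,u_\e$ this produces $\bar\lambda_\e(\Phi)\leq\min_\T\Phi+\Lambda\e$ for all $\e$ below a threshold $\e_1$ that depends only on $r_0$ and $d$, with $\Lambda$ depending only on $\Vert\Phi\Vert_{\mathscr C^2}$ and $\T$.

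For the remaining range $\e\geq\e_1$ it suffices to test the Rayleigh quotient against the (normalised) constant function, which gives $\bar\lambda_\e(\Phi)\leq\fint_\T\Phi\leq\min_\T\Phi+(\max_\T\Phi-\min_\T\Phi)$ with $\max_\T\Phi-\min_\T\Phi\leq\mathrm{diam}(\T)\,\Vert\n\Phi\Vert_{L^\infty(\T)}\lesssim\Vert\Phi\Vert_{\mathscr C^2}$, and since $\e\geq\e_1$ this is absorbed into $\Lambda\e$ after enlarging $\Lambda$ once more. The argument is elementary; the only point requiring care is the bookkeeping needed to keep $\Lambda$ \emph{uniform} in $\e$ and dependent only on $\Vert\Phi\Vert_{\mathscr C^2}$ and the torus, which is precisely what forces the small-$\e$/large-$\e$ split and the exponential smallness of the cut-off contribution. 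I would also note that the non-degeneracy hypothesis on the local minimisers is not in fact used for this two-sided estimate — only the $\mathscr C^2$ Taylor inequality at $x_0$ enters — and that it would be needed only for the sharp first-order expansion $\bar\lambda_\e(\Phi)=\min_\T\Phi+\e\,\Tr\big((\n^2\Phi(x_0)/2)^{1/2}\big)+o(\e)$, the proof of which additionally requires a matching lower bound via Agmon-type concentration of the ground state near $x_0$; that refinement is not needed here.
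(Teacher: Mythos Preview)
Your argument is correct and is the standard variational proof of this semiclassical bound. Note, however, that the paper does not supply its own proof of this proposition: it is quoted verbatim from \cite[Proposition~2]{zbMATH05041278} and used as a black box in the proof of Proposition~\ref{Pr:BlowUp}. Your Gaussian test-function computation is exactly the classical route to such estimates, and your remark that the non-degeneracy hypothesis is not actually needed for the two-sided inequality (only $\mathscr C^2$ regularity at a global minimiser) is accurate; the non-degeneracy assumption in the statement is inherited from the cited reference, where it is used to obtain the sharper first-order expansion you allude to at the end.
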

\begin{proof}[Proof of Proposition \ref{Pr:BlowUp}]
Let us first show that we can assume, without loss of generality, that $V(0)=\max_\T V=0$. Using the Hopf-Cole transform $\p_{\e,V}:=-\ln(u_{\e,V})$, the function $\p_{\e,V}$ satisfies
\[\e \partial_t \p_{\e,V}-\e^2\Delta \p_{\e,V}+\e^2|\n \p_{\e,V}|^2=-\lambda_\e(V)-c(t)V(x).\] Let $\gamma_\e$ be the unique solution of 
\[ \begin{cases}
\e \gamma_\e'(t)=c(t)V(0)-\fint_0^T c\cdot V(0)\,,
\\ \gamma_\e(T)=\gamma_\e(0).
\end{cases}\]Setting $\tilde \p_{\e,V}:=\p_{\e,V}+\gamma_\e$, the function $\tilde \p_{\e,V}$ solves the Hamilton-Jacobi equation 
\[
\e\partial_t \tilde\p_{\e,V}-\e^2\Delta\tilde\p_{\e,V}+\e^2|\n \tilde\p_{\e,V}|^2=-\lambda_\e(V)-\fint_0^T c \cdot V(0)-c(t)(V(x)-V(0))\] and up to replacing $\lambda_\e(V)$ with $\lambda_\e(V)+\fint_0^T c\cdot V(0)$ we can thus assume that:
\begin{equation}\label{Eq:Morse} V(x)\leq V(0)=0\,, \n^2 V(0)\in S_d^{++}(\R).\end{equation}

We now prove the following: if \eqref{Eq:Morse} holds, there exists a constant $C$ such that
\begin{equation}\label{Eq:Holcman}
\forall \e>0\text{ small enough, } 0\leq \frac{\lambda_\e(V)}{\e}\leq C.
\end{equation}
The fact that $\lambda_\e(V)\geq 0$ follows from the maximum principle: at a point $(t^*,x^*)$ of maximum of $u_{\e,V}>0$ we obtain $\lambda_\e(V)+c(t^*)V(x^*)\geq 0$ and the conclusion follows since $V\leq 0$. In order to obtain the upper-bound on $\lambda_\e(V)/\e$ we use the fact that $V\leq 0$ so that 
$ c(t)V(x)\geq \Vert c\Vert_{L^\infty}V(x).$ As a consequence 
$ \lambda_\e(c)\leq \lambda_\e(\Vert c\Vert_{L^\infty}).$ Without loss of generality, let us assume that $\Vert c\Vert_{L^\infty}=1$. Thus if we define $\bar\lambda_\e(V)$ as the first eigenvalue of $-\e^2\Delta -V$ we obtain 
$ 0\leq \lambda_\e(c)\leq \bar\lambda_\e(V).$ As $V\leq V(0)=0$ and as $0$ is a non-degenerate critical point we deduce from Proposition \ref{Pr:Holcman} that
$ 0\leq \lambda_\e(V)\leq \Lambda \e$ for some constant $\Lambda$ that only depends on $\Vert V\Vert_{\mathscr C^2}$. \eqref{Eq:Holcman} follows.

\textbf{Blow-up procedure}
We  introduce the rescaled function 
$w_\e:(t,x)\mapsto u_{\e,V}(t,\sqrt{\e}x).$ Letting $\O\subset ]-\pi;\pi[^d$ be a neighbourhood of 0, $w_\e$ solves:
\[\begin{cases} \partial_t w_\e-\Delta w_\e=\frac1\e\lambda_\e(c)w_\e+c\frac{V(\sqrt{\e}\cdot)}\e w_\e\text{ in }(0,T)\times \frac1\e\O\,, 
\\ w_\e(T,\cdot)=w_\e(0,\cdot)\,, \\w_\e(0,0)=1.\end{cases}\]
Observe that locally uniformly 
\[ \frac{V(\sqrt{\e}\cdot)}\e\underset{\e \to 0}\rightarrow \left(x\mapsto\frac12 \langle x,\n^2V(0)x\rangle\right). \]
Recall that $A:=-\n^2 V(0)\in S_d^{++}(\R).$
Let $\overline \lambda(c)$ be a closure point of the sequence $\frac{\lambda_\e(c)}\e$ as $\e\to 0$. As $0\leq w_\e\leq 1$, proceeding as in \cite[Chapter 2]{zbMATH00049232} we deduce from parabolic estimates that $w_\e$ converges locally uniformly as $\e\to 0$ to a bounded function $w_0\in \mathscr C^{1,2}(\R^d)$ that solves
\begin{equation}\label{Eq:LimitEigenProblem}
\begin{cases}
\partial_t w_0-\Delta w_0=\overline\lambda(c)w_0-\frac12c(t)\langle x,Ax\rangle w_0&\text{ in }[0,T]\times \R^d\,, 
\\ w_0(T,\cdot)=w_0(0,\cdot)\,, 
\\ w_0(0,0)=1\,,
\\ w_0\geq 0.
\end{cases}\end{equation}
As $w_0$ is bounded, non-zero and non-negative, the conclusion follows.\end{proof}
\begin{remark}\label{Re:CvMin}
Observe that, at this stage, if we work with a weakly converging sequence $\{c_\e\}_{\e\to 0}$ rather than with a fixed $c$, the potential $(c_\e V(\sqrt{\e}\cdot)/\e)$ converges weakly to $-c_\infty\langle Ax,x\rangle$ and the same argument allows to conclude and to prove Proposition \ref{Pr:CvMin}.
\end{remark}

We can finally conclude the proof of the main theorem.
\begin{proof}[Proof of Theorem \ref{Th:SmallDiffusivity}]
We let, for any $\e>0$, $c^\e_{\min}$ be a solution of \eqref{Eq:PvLowDiffusivity}. Up to a subsequence, we know that $c^\e_{\min}\underset{\e\to0}\rightharpoonup c_0$ for some $c_0\in \mathcal C$. We deduce from Proposition \ref{Pr:BlowUp} that
\[ \frac{\lambda_\e(c^\e_{\min})+\langle c^\e_{\min}\rangle V(0)}\e\underset{\e\to 0}\rightharpoonup \overline \lambda(c_0).\] Passing to the limit in 
\[ \lambda_\e(c^\e_{\min})\leq \lambda_\e(c)\text{ for all }c\in \mathcal C\] we deduce that 
\[ \forall c\in \mathcal C\,, \overline\lambda(c_0)\leq \overline\lambda(c).\] From Theorem \ref{Th:Gaussian} we deduce that $c_0=\bar c$ and thus we have that $\{c^\e_{\min}\}_{\e\to 0}$ converges weakly in $L^\infty-*$ to $\bar c$ as $\e\to 0$. As $\bar c$ is an extreme point of $\mathcal C$ (see Remark \ref{Re:Topological}), this convergence is strong in all $L^p((0,T))$.
\end{proof}

\section{Proof of Theorem \ref{Th:SymmetryBreaking}}
We let $H:p\mapsto p^{2k}$. The proof of Theorem \ref{Th:SymmetryBreaking} boils down to proving that $\bar c$ does not satisfy first order optimality conditions for \eqref{Eq:PvHamiltonian} if $k\neq 1$. We thus first investigate these optimality conditions.

\textbf{Optimality conditions for \eqref{Eq:PvHamiltonian}}
Introduce, for any $c\in \mathcal C$, the solution $\psi_c$ of 
\begin{equation}\label{Eq:AdjointHamiltonian}
\begin{cases}
-\partial_t\psi_c-\Delta\psi_c=H'(\p_c)-\fiint_{\TT} H'(\p_c)\,, 
\\ \psi_c(T,\cdot)=\psi_c(0,\cdot)\,, 
\\ \fiint_\TT \psi_c=0.
\end{cases}\end{equation}The first order optimality conditions read:
\begin{lemma}\label{Le:OptimalityConditions}
Let $c_{\mathrm{min}}$ be a solution of \eqref{Eq:PvHamiltonian} and define 
\[ \Psi_{c_{\mathrm{min}}}:t\mapsto \int_\T \psi_{c_{\mathrm{min}}}(t,x)m(x)dt.\] Then
\[ \forall c \in \mathcal C\,, \int_0^T \Psi_{c_{\mathrm{min}}}c_{\mathrm{min}}\geq \int_0^T \Psi_{c_{\mathrm{min}}}c.\]In particular, if $\overline c$ is optimal, $\Psi_{\bar c}$ is symmetric with respect to $\frac{T}2=\pi$.
\end{lemma}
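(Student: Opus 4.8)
The plan is to differentiate $J_k$ along admissible perturbations of $c_{\mathrm{min}}$, to rewrite the derivative through the adjoint state $\psi_c$ of \eqref{Eq:AdjointHamiltonian}, and to read off the variational inequality; the symmetry of $\Psi_{\bar c}$ will then follow from the rigidity (equality case) of the Hardy--Littlewood rearrangement inequality. To begin with, fix a competitor $c\in\mathcal C$ and set $c_s:=c_{\mathrm{min}}+s(c-c_{\mathrm{min}})\in\mathcal C$ for $s\in[0,1]$. Since the map $c\mapsto\p_c$ is affine (the right-hand side of \eqref{Eq:Main3} is affine in $c$ and the normalisation $\fiint_\TT\p_c=0$ is linear), $\p_{c_s}=\p_{c_{\mathrm{min}}}+s\dot\p$ where $\dot\p=\p_c-\p_{c_{\mathrm{min}}}$ has $\fiint_\TT\dot\p=0$ and solves, with periodic boundary conditions,
\[ \partial_t\dot\p-\Delta\dot\p=(c-c_{\mathrm{min}})V-\Big(\fint_0^T(c-c_{\mathrm{min}})\Big)\fint_\T V . \]
As $\p_{c_{\mathrm{min}}},\dot\p\in L^\infty(\TT)$ by parabolic regularity, $s\mapsto J_k(c_s)=\iint_\TT(\p_{c_{\mathrm{min}}}+s\dot\p)^{2k}$ is polynomial in $s$ and $\tfrac{d}{ds}J_k(c_s)\big|_{s=0}=\iint_\TT H'(\p_{c_{\mathrm{min}}})\dot\p$.

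Next I would pass to the adjoint. Because $\fiint_\TT\dot\p=0$, one may replace $H'(\p_{c_{\mathrm{min}}})$ by $H'(\p_{c_{\mathrm{min}}})-\fiint_\TT H'(\p_{c_{\mathrm{min}}})=-\partial_t\psi_{c_{\mathrm{min}}}-\Delta\psi_{c_{\mathrm{min}}}$ (equation \eqref{Eq:AdjointHamiltonian}) and integrate by parts in $t$ and in $x$; periodicity annihilates all boundary terms, and $\fiint_\TT\psi_{c_{\mathrm{min}}}=0$ kills the term carrying $\fint_\T V$, whence
\[ \iint_\TT H'(\p_{c_{\mathrm{min}}})\dot\p=\iint_\TT\psi_{c_{\mathrm{min}}}\big(\partial_t\dot\p-\Delta\dot\p\big)=\int_0^T(c-c_{\mathrm{min}})\,\Psi_{c_{\mathrm{min}}} . \]
Since $c_{\mathrm{min}}$ maximises $J_k$ over $\mathcal C$ and $c_0=c_{\mathrm{min}}$, the one-sided derivative of $s\mapsto J_k(c_s)$ at $s=0$ is $\le0$, i.e.\ $\int_0^T\Psi_{c_{\mathrm{min}}}c_{\mathrm{min}}\ge\int_0^T\Psi_{c_{\mathrm{min}}}c$ for all $c\in\mathcal C$.

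For the symmetry statement, assume $c_{\mathrm{min}}=\bar c$, so that the inequality above says $\bar c$ maximises the linear functional $c\mapsto\int_0^T\Psi_{\bar c}c$ over $\mathcal C=\{c\preceq\bar c\}$. For any $c\preceq\bar c$ the Hardy--Littlewood inequality gives $\int_0^T\Psi_{\bar c}c\le\int_0^T\Psi_{\bar c}^\#c^\#$; since $c^\#\preceq\bar c$ and $\Psi_{\bar c}^\#,c^\#,\bar c$ are symmetric non-increasing about $T/2$, the layer-cake representation of $\Psi_{\bar c}^\#$ over centred intervals together with $c^\#\preceq\bar c$ yields $\int_0^T\Psi_{\bar c}^\#c^\#\le\int_0^T\Psi_{\bar c}^\#\bar c$. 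This bound is attained by the function equimeasurable with $\bar c$ whose super-level sets are those of $\Psi_{\bar c}$ (an extreme point of $\mathcal C$ by Remark \ref{Re:Topological}), so $\max_{c\in\mathcal C}\int_0^T\Psi_{\bar c}c=\int_0^T\Psi_{\bar c}^\#\bar c$. Optimality of $\bar c$ thus forces equality in $\int_0^T\Psi_{\bar c}\bar c\le\int_0^T\Psi_{\bar c}^\#\bar c$; since $\bar c=\bar c^\#$ is strictly monotone on $[T/2,T]$ in the case at hand (here $\bar c(t)=\cos(\pi+t)$), the rigidity of Hardy--Littlewood forces every super-level set of $\Psi_{\bar c}$ to be a centred interval, i.e.\ $\Psi_{\bar c}=\Psi_{\bar c}^\#$ is symmetric about $T/2$.

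The first two steps are essentially bookkeeping once the affine dependence $c\mapsto\p_c$ and the two integrations by parts are set up; the genuinely delicate point is the rigidity of the Hardy--Littlewood inequality used in the last step, and it is precisely there that the strict monotonicity of $\bar c$ on $[T/2,T]$ is needed. In the absence of strict monotonicity one would only obtain that $\Psi_{\bar c}$ is symmetric non-increasing away from the plateaus of $\bar c$; this would still be enough for the subsequent proof of Theorem \ref{Th:SymmetryBreaking}.
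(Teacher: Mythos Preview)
Your argument is correct. The derivation of the variational inequality in the first half is essentially the paper's computation, written through the affine dependence $c\mapsto\p_c$ rather than an abstract Gateaux derivative; the integration by parts against the adjoint is identical.

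The symmetry part, however, is a genuinely different route. The paper argues by contradiction: assuming $\Psi_{\bar c}$ is not symmetric, it locates an interval $(a,b)\subset(0,T/2)$ where $\Psi_{\bar c}$ strictly dominates (or is dominated by) its reflection, and then swaps two symmetric pieces of $\bar c$ on $(a,b)$ and $(T-b,T-a)$ to build an explicit competitor that beats $\bar c$, contradicting the optimality inequality. Your proof instead recognises that optimality of $\bar c$ forces equality in the chain
\[
\int_0^T \Psi_{\bar c}\,\bar c \;\le\; \int_0^T \Psi_{\bar c}^\#\,\bar c \;=\; \max_{c\in\mathcal C}\int_0^T \Psi_{\bar c}\,c,
\]
and then invokes the rigidity of the Hardy--Littlewood inequality, using that $\bar c=\cos(\pi+\cdot)$ is strictly monotone on each half-period, to conclude $\Psi_{\bar c}=\Psi_{\bar c}^\#$. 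This is cleaner and more conceptual: it isolates exactly where the strict monotonicity of $\bar c$ enters (the equality case of Hardy--Littlewood) and, as you note, immediately shows what survives if $\bar c$ has plateaus. The paper's hands-on swap has the advantage of being entirely elementary, avoiding any appeal to rigidity results, but is a bit heavier to write out. One small point worth tightening in your write-up: the existence of an admissible $c^*$ with $\int_0^T\Psi_{\bar c}c^*=\int_0^T\Psi_{\bar c}^\#\bar c$ is standard (bathtub principle / Ryff-type construction) but could be stated with a reference, since it is the hinge of the argument.
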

\begin{proof}[Proof of Lemma \ref{Le:OptimalityConditions}]
The Gateaux differentiability of  $J_H(c):=\iint_{\TT} H(\p_c)$ with respect to $c$ is a consequence of parabolic regularity and, letting $\dot\p_c$ denote the Gateaux derivative of the solution mapping $\mathcal C\ni c\mapsto \p_c$ at $c$ in a given direction $h\in L^\infty_{\mathrm{per}}((0,T))$ with $\langle h\rangle=0$, $\dot\p_c$ satisfies 
\begin{equation}\label{Eq:GateauxSolution}
\begin{cases}
\partial_t\dot\p_c-\Delta\dot\p_c=h(t)m(x)\in \TTo\,, 
\\ \dot\p_c(0,\cdot)=\dot\p_c(T,\cdot)\,, 
\\ \langle \dot\p_c\rangle=0.
\end{cases}
\end{equation}
Similarly, the Gateaux derivative of the map $J_H$ admits the expression 
\[ \dot J_H(c)[h]=\iint_\TTo \dot\p_cH'(\p_c). \]
Multiplying \eqref{Eq:GateauxSolution} by the solution $\psi_c$ of \eqref{Eq:AdjointHamiltonian} and integrating by parts, this gives
\[ \dot J_H(c)[h]=\iint_\TTo h(t)m(x)\psi_c(t,x)dtdx=\int_0^T h(t)\Psi_{c_{\mathrm{min}}(t)}dt.\]In particular, if $c_{\mathrm{min}}$ is a solution of \eqref{Eq:PvHamiltonian}, we deduce that, if we set $\Psi_{c_{\mathrm{min}}}:t\mapsto \int_\T m(x)\psi_{c_{\mathrm{min}}}(t,x)dx$, there holds
\begin{equation}\label{Eq:OC1}
\forall c \in \mathcal C\,, \int_0^T \Psi_{c_{\mathrm{min}}}(t) c_{\mathrm{min}}(t)dt\geq\int_{0}^T\Psi_{c_{\mathrm{min}}}(t) c(t)dt.
\end{equation}
Now, let us assume that $\overline c$ is optimal and let us write, for the sake of convenience, $\Psi$ for $\Psi_{\overline c}$. We know that for any $c\in \mathcal C$ we have 
\[ \int_0^T \Psi \bar c\geq \int_0^T \Psi c.\] Now, assume by contradiction that $\Psi$ is not symmetric. In order to alleviate notations, for any subset $E\subset(0,T/2)$, we let $\tilde E$ be the symmetric of $E$ with respect to $T/2$ and $\tilde\Psi$ be defined as 
\[ \tilde \Psi:t\mapsto \begin{cases} \Psi(T/2-t)\text{ if }t\in(0,T/2)\,,\\ \Psi(t)\text{ if } t\in (T/2;T).\end{cases} \] Since $\Psi$ is not symmetric, there exists an interval $I=(a,b)\subset (0,T/2)$ such that
\[\sup_{(a,b)}\tilde \Psi<\inf_{(a,b)}\Psi \text{ or }\inf_{(a,b)}\tilde\Psi>\sup_{(a,b)}\Psi\text{ in }(a,b).\] Since both cases are treated similarly, assume that 
\[\sup_{(a,b)}\tilde \Psi<\inf_{(a,b)}\Psi.\] The interval $\tilde I$ is explicitly given by $\tilde I=(T-b,T-a)$. 
We first assume that $\Psi>0$ in $(a,b)$.
Observe that $\bar c$ is decreasing. We let $d:=(a+b)/2$ and we define $c$ as 
\[ c:t\mapsto\begin{cases}
\bar c(t)\text{ if } t\in (0,T)\setminus (I\cup \tilde I)\,,
\\ \bar c(t)\text{ if }t\in ((a+b)/2,b)\cup(T-(a+b)/2,T-a)\,,
\\ \bar c(t+T-b-a)\text{ if }t \in (a,(a+b)/2)\,, 
\\ \bar c (t+(a+b)-T)\text{ if }t\in (T-b,T-(a+b)/2).
\end{cases}\]
Observe that as $\bar c$ is strictly increasing in $(0,T/2)$ there exists a constant $\delta>0$ such that for any $t\in((a+b)/2;b)$ \[ \bar c(t)-\bar c(t-(a+b)/2)\geq \delta,\] whence a direct comparison shows 
\[\int_0^T \Psi(\bar c-c)<0,\] a contradiction.\end{proof}
We now prove the following proposition (recall that $m(x)=\cos(x)$ and $\bar c(t)=\cos(\pi+t)$):
\begin{proposition}\label{Pr:NonOptimality}
$\bar c$ does not satisfy first-order optimality conditions for \eqref{Eq:PvHamiltonian}.\end{proposition}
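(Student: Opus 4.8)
The plan is to compute explicitly all the relevant objects in the very concrete setting $T=2\pi$, $m(x)=\cos x$, $\bar c(t)=\cos(\pi+t)=-\cos t$, and to check that the function $\Psi_{\bar c}$ appearing in Lemma~\ref{Le:OptimalityConditions} fails to be symmetric about $t=\pi$ when $k\neq 1$. Concretely, since $m=\cos$ lives in the single Fourier mode $k=1$ on $\mathbb T^1$, the solution $\p_{\bar c}$ of \eqref{Eq:Main3} (with the notation $\alpha_{1,c}$ of the proof of Proposition~\ref{Pr:LimitProblem}, $\lambda_1=1$) is $\p_{\bar c}(t,x)=a(t)\cos x$, where $a'+a=\bar c=-\cos t$ with $a$ being $2\pi$-periodic. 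Solving this linear scalar ODE gives $a(t)=-\tfrac12(\cos t+\sin t)=-\tfrac1{\sqrt2}\cos(t-\tfrac\pi4)$. Thus $\p_{\bar c}(t,x)=-\tfrac1{\sqrt2}\cos(t-\tfrac\pi4)\cos x$.

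Next I would compute $H'(\p_{\bar c})=2k\,\p_{\bar c}^{2k-1}$ and expand it in Fourier modes in $x$. Using $\cos^{2k-1}x=\sum_{j=0}^{k-1}\binom{2k-1}{j}2^{-(2k-2)}\cos((2k-1-2j)x)$ (only odd modes appear), we get $H'(\p_{\bar c})(t,x)=2k\,(-\tfrac1{\sqrt2}\cos(t-\tfrac\pi4))^{2k-1}\sum_j b_j\cos((2k-1-2j)x)$ with positive constants $b_j$. The solution $\psi_{\bar c}$ of the backward heat equation \eqref{Eq:AdjointHamiltonian} then separates mode by mode: writing $H'(\p_{\bar c})=\sum_{\ell\text{ odd}} g_\ell(t)\cos(\ell x)$ (mean-zero-in-$x$ already since only $\ell\ge 1$ appear), we have $\psi_{\bar c}(t,x)=\sum_{\ell\text{ odd}} h_\ell(t)\cos(\ell x)$ with $-h_\ell'+\ell^2 h_\ell=g_\ell$, $h_\ell$ being $2\pi$-periodic. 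Finally $\Psi_{\bar c}(t)=\int_{\mathbb T}\psi_{\bar c}(t,x)\cos x\,dx=\pi\,h_1(t)$, so only the mode $\ell=1$ survives: $\Psi_{\bar c}$ is (up to the positive constant $\pi$) the $2\pi$-periodic solution of $-h_1'+h_1=g_1$, where $g_1(t)=\gamma_k\,\big(\cos(t-\tfrac\pi4)\big)^{2k-1}$ for an explicit nonzero constant $\gamma_k$ (with the sign of $(-1)^{2k-1}=-1$ absorbed in $\gamma_k$).

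The point is then a parity/phase-shift argument. The function $g_1$ is, up to a multiplicative constant, $s\mapsto (\cos(t-\tfrac\pi4))^{2k-1}$; symmetry of $\Psi_{\bar c}$ about $t=\pi$ is equivalent to symmetry about $t=\pi$ of the $2\pi$-periodic solution $h_1$ of the first-order linear ODE $-h_1'+h_1=g_1$. I would argue that $h_1$ is symmetric about $t=\pi$ if and only if $g_1$ is symmetric about $t=\pi$ (this uses the explicit Green's function/convolution representation $h_1(t)=\tfrac{1}{1-e^{-2\pi}}\int_0^{2\pi}e^{-s}g_1(t+s)\,ds$ -- or rather the correct sign version for $-h_1'+h_1=g_1$, namely $h_1(t)=\tfrac1{e^{2\pi}-1}\int_0^{2\pi}e^{s}g_1(t+s)\,ds$ -- and the fact that convolution with a non-symmetric kernel cannot turn an asymmetric function into a symmetric one unless... ). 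More robustly: one decomposes $g_1$ into its Fourier series in $t$; the operator $(-\tfrac{d}{dt}+1)^{-1}$ acts on $e^{int}$ by the multiplier $(1-in)^{-1}$, whose phase is nonzero for $n\neq0$, so the $n$-th and $(-n)$-th coefficients get multiplied by complex conjugate but \emph{non-real} factors; hence $h_1$ is symmetric about some point iff $g_1$ is symmetric about the same point. Now $g_1(t)\propto\cos^{2k-1}(t-\tfrac\pi4)$ is symmetric about $t=\tfrac\pi4$ and about $t=\tfrac\pi4+\pi$ (the maxima/minima of $\cos(t-\tfrac\pi4)$) and \emph{only} about those points modulo $\pi/\ldots$; in particular, for $k\ge 2$ (so the exponent $2k-1\ge 3$) it is not symmetric about $t=\pi$, whereas for $k=1$, $g_1(t)\propto\cos(t-\tfrac\pi4)$ and $\Psi_{\bar c}$ turns out to be a shifted cosine -- and one checks separately (using the symmetry already guaranteed by Theorem~\ref{Th:Symmetry1} and $\bar c^\#=\bar c$, or just directly) that in the case $k=1$ everything is consistent. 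Therefore for $k\neq 1$, $\Psi_{\bar c}$ is not symmetric about $\pi$, which by Lemma~\ref{Le:OptimalityConditions} shows $\bar c$ does not satisfy the first-order optimality conditions for \eqref{Eq:PvHamiltonian}, proving Proposition~\ref{Pr:NonOptimality}.

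The main obstacle I anticipate is making rigorous and clean the claim ``$(-\tfrac{d}{dt}+1)^{-1}$ preserves the axis of symmetry but no function changes its axis of symmetry under it,'' i.e. ruling out accidental symmetrization; the cleanest route is the Fourier-multiplier computation above, tracking that for $g_1=\sum_n \hat g_1(n)e^{int}$ with $\hat g_1(n)=\overline{\hat g_1(-n)}$ (real-valued $g_1$) and symmetry about $t_0$ meaning $\hat g_1(n)e^{-2int_0}\in\mathbb R$ for all $n$, the image $h_1$ has $\hat h_1(n)=(1-in)^{-1}\hat g_1(n)$, so $h_1$ is symmetric about $t_0$ iff $\hat g_1(n)(1-in)^{-1}e^{-2int_0}\in\mathbb R$ for all $n$; since $(1-in)^{-1}$ has argument $\arctan n\notin\pi\mathbb Z$ for $n\neq0$, this forces, when $g_1$ is already symmetric about $\tfrac\pi4$, that it can additionally be symmetric about $\pi$ only if $\hat g_1(n)=0$ for all $n\neq 0$ with $\arctan n$ not making the phases line up -- and a short check with the explicitly known (finitely many) nonzero Fourier modes of $\cos^{2k-1}(t-\tfrac\pi4)$, namely $n\in\{\pm1,\pm3,\dots,\pm(2k-1)\}$, shows this fails for $k\ge2$. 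The rest of the proof is bookkeeping of explicit trigonometric constants, which I would not spell out in full.
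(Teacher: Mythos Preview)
Your reduction is the same as the paper's: you correctly obtain $\p_{\bar c}(t,x)=\omega(t)\cos x$ with $\omega(t)=-\tfrac12(\cos t+\sin t)$, and you correctly note that $\Psi_{\bar c}=\pi h_1$ where $h_1$ is the $2\pi$-periodic solution of $-h_1'+h_1=\gamma_k\,\omega^{2k-1}$. The divergence from the paper is in how you test the symmetry of $h_1$ about $t=\pi$, and there your main argument has a genuine gap.

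You assert that the multiplier $(1-in)^{-1}$ ``preserves the axis of symmetry,'' i.e.\ that $h_1$ is symmetric about $t_0$ iff $g_1$ is. This is false, and the case $k=1$ is already a counterexample: $g_1\propto\cos(t-\tfrac\pi4)$ is symmetric about $\tfrac\pi4$ and $\tfrac{5\pi}4$ but \emph{not} about $\pi$, yet $h_1\propto\cos t$ \emph{is} symmetric about $\pi$. You seem to notice the tension (``one checks separately\ldots''), but you still invoke the false equivalence to dispose of $k\ge 2$. The phase shift induced by $(1-in)^{-1}$ is $\arctan n$, which for $n=\pm1$ is exactly $\pm\tfrac\pi4$ and \emph{does} move the axis from $\tfrac\pi4$ to $0$ (equivalently $\pi$). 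Your fallback via explicit mode-by-mode phase bookkeeping would work (the $n=\pm3$ mode, present iff $k\ge2$, has $\arctan 3-\tfrac{3\pi}4\notin\pi\mathbb Z$), but you do not carry it out, and the argument you actually wrote down does not prove the claim.

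The paper avoids the phase-tracking entirely by a one-line algebraic trick: apply $\tfrac{d}{dt}+\mathrm{Id}$ to both sides of $-\Psi'+\Psi=a_k\omega^{2k-1}$ to obtain the \emph{self-adjoint} equation
\[
-\Psi''+\Psi=a_k\,\omega^{2k-2}\bigl((2k-1)\omega'+\omega\bigr)=:a_k F(t).
\]
Now symmetry of $\Psi$ about $\pi$ would force symmetry of $F$ about $\pi$, hence $F'(\pi)=0$. A direct evaluation using $\omega(\pi)=\omega'(\pi)=-\omega''(\pi)=\tfrac12$ gives $F'(\pi)=\tfrac{(2k-1)(2k-2)}{2^{2k-1}}$, which vanishes iff $k=1$. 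This symmetrisation of the operator is what your Fourier argument is missing; it replaces the non-real multipliers $(1-in)^{-1}$ by the real multipliers $(1+n^2)^{-1}$, for which symmetry of the output genuinely is equivalent to symmetry of the input.
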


\begin{proof}[Proof of Proposition \ref{Pr:NonOptimality}]
We can solve \eqref{Eq:Main3} explicitly. As $\fiint_{\TT} \bar c m=0$, $\p_{\bar c}$ solves
\[\partial_t\p_{\bar c}-\Delta\p_{\bar c}=\cos(\pi+t)\cos(x).\] Thus, 
\[ \p_{\bar c}=\omega(t)\cos(x)\text{ with }\begin{cases}\omega'(t)+\omega(t)=\cos(\pi+t)\,, 
\\ \omega(2\pi)=\omega(0).\end{cases}\]The function $\omega$ admits the explicit expression
\[\omega(t)=\frac{\cos(t+\pi)+\sin(t+\pi)}2.\]
Consequently the function $\psi=\psi_{c_{\mathrm{min}}}$ solves
\[ -\partial_t\psi-\Delta\psi=H'\left(\omega(t)\cos(x)\right)=(2k)\omega(t)^{2k-1}\cos(x)^{2k-1}.\] Since 
\[ \frac1\pi\int_{-\pi}^\pi\cos(x)^{2k-1}\cos(x)dx=\frac{1}{2^{2k-1}}\] it follows that the function $\Psi=\int_{-\pi}^\pi \psi\cos(\cdot)$ solves
\[ -\Psi'+\Psi=\frac{2k\pi}{2^{2k-1}}\omega^{2k-1}=a_k\omega^{2k-1}\text{ with $a_k=\frac{2k\pi}{2^{2k-1}}$}.\]

Applying the operator $(d/dt+\mathrm{Id})$ to the previous equation we obtain 
\[\frac1{a_k}( -\Psi''+\Psi)=\omega(t)^{2k-2}\left((2k-1)\omega'(t)+\omega(t)\right).\] If $\Psi$ were symmetric with respect to $t=\pi$, then so would the map
\[F:t\mapsto \omega(t)^{2k-2}\left((2k-1)\omega'(t)+\omega(t)\right).\]
However, observe that 
\[ F'(t)=(2k-1)(2k-2)\omega'(t)^2\omega(t)^{2k-3} +(2k-1)\omega''(t)\omega(t)^{2k-2}+(2k-1)\omega'(t)\omega(t)^{2k-2}.
\]
As 
\[ \omega(\pi)=\omega'(\pi)=-\omega''(\pi)=\frac12\]we conclude that 
\[F'(\pi)=(2k-1)(2k-2)\frac1{2^{2k-1}}-\frac{2k-1}{2^{2k-1}}+(2k-1)\frac{1}{2^{2k-1}}=\frac{(2k-1)(2k-2)}{2^{2k-1}}.
\]In particular, $F'(\pi)=0$ if and only if $k=1$.
\end{proof}

\section{Numerical simulations}\label{Se:Numerics}

In the following we illustrate the theoretical results from previous sections through a series of numerical simulations. These help us formulate several conjectures in the conclusion of the article.

\subsection{General structure of the section}
In this section, we work in the one-dimensional (in space) setting:  we identify $\mathbb T$ with the interval $[0,1]$. We proceed in steps of increasing complexity: after explaining how to handle, at a numerical level, admissible classes of the form \eqref{Eq:Adm2} (which is non standard when compared to the existing literature, see for instance \cite{zbMATH07605268} and the references therein), we study, successively, the following problems (the order of the problems corresponds to their intrinsic mathematical and computational difficulties):
\begin{enumerate}
\item First, we investigate Talenti-like problems, that is, we focus on \eqref{Eq:PvHamiltonian}. We give a numerical illustration of the symmetry breaking phenomenon of Theorem \ref{Th:SymmetryBreaking}.
\item Second, we focus on \eqref{Pv:2}, and our numerical simulations back up Theorems \ref{Th:LargeDiffusivity}--\ref{Th:SmallDiffusivity}: it seems likely that, in general, an optimiser $c_{\min}$ of \eqref{Pv:2} satisfies $c_{\min}=c_{\min}^\#$.
\item Finally, we turn to \eqref{Pv:1} where we also obtain, numerically, the monotonicity in time of optimisers.
\end{enumerate}

\subsection{Generalities on the discretisation of the equations and of the constraints}
\subsubsection{Discretisation of the PDE and of the objective functional}
We discretise the time interval $[0,T]$ using a uniform partition
\[ 0=t_0<t_1<...<t_N = T, t_i = i/N.\]
Similarly, the space interval $[0,1]$ is discretised using a uniform partition of $[0,1]$
\[ 0=x_0<x_1<...<x_M = 1, x_i = i/M.\] 
The time and space steps are denoted by $(\Delta t) = T/N, (\Delta x) = 1/M$ respectively. The periodicity conditions in time and space are implemented by identifying $t_{i+N}=t_i$ and $x_{i+M} = x_i$. Therefore, indices are extended by periodicity in time and space whenever necessary.

Given a function $u:[0,T]\times \Bbb T \to \Bbb R$  consider the matrix $U \in \Bbb{R}^{M\times N}$ approximating the values of $u$ at the nodes of a finite difference grid on $[0,T]\times \Bbb T$ with nodes given above by $u(t_j,x_i) \approx U_{i,j}$. Denote by $U_j$, $j=0,...,N$, the vector approximating $U(t_j,\cdot)$ at the nodes of the space grid. 

The Laplace operator $(-\Delta)$  is approximated using second order finite differences, through the matrix
	\[ A = \frac{1}{\Delta x^2}\begin{pmatrix}
	2 & -1 & ... & ...  & -1 \\
	-1 & 2 & -1 & ... & 0 \\
	... & ... & ... & ... & ... \\
	-1 & ... & ... & -1 & 2
	\end{pmatrix}.\]Thus, for a fixed $0\leq j \leq N-1$
 \[ -\left( \frac{U_{i+1,j}-2U_{i,j}+U_{i-1,j}}{\Delta x^2} \right)_{i=0}^{M-1} =  A U_j.\]
 The time derivative is approximated using first order finite differences. The parabolic equation is discretised implicitly for the diffusion term, and explicitly for the remaining terms. 

 {\bf The direct problem.} We first study the discretisation of problem \eqref{Eq:Main3}. Without loss of generality, assume that $\int_{\Bbb{T}}V(x)dx = 0$. The discrete approximation for \eqref{Eq:Main3} is given by
\[ \frac{U_{n+1}-U_n}{(\Delta t)}+AU_{n+1}=c(t_i)V, \text{ for every } 0 \leq n \leq N-1
 \]
 where we denote we a slight abuse of notation $V$ the discretisation of $V$ at space points $x_i$. Re-factoring gives
 \[ (I+(\Delta t) A)U_{n+1}-U_n=(\Delta t) c_i(t)V, \text{ for every } 0 \leq n \leq N-1.\]
 Taking into account the periodicity in time we find that the discrete problem is equivalent to
  \begin{equation}\label{eq:discrete-main3}
 \mathcal A \bo U = \bo R,\end{equation}
	where $\bo U \in \Bbb{R}^{M\times N}$ is the vector obtained by concatenating $\bo U = (U_0,...,U_{N-1})$ and $B = (I+(\Delta t)A)$, $\mathcal A$ is the block matrix
	\begin{equation}
	    \label{eq:matrix-A-block}
	 \mathcal A = \begin{pmatrix}
	-I & B & 0 & ... & 0 \\
	0 & -I & B & ... & 0 \\
	... & ... & ... &... &... \\
	B & 0 & ... & ... & -I
	\end{pmatrix} \end{equation}
	and the right hand side is given by
	\[ \bo R = \begin{pmatrix}
	c_0 V \\ c_1 V \\\vdots \\ c_{N-1} V
	\end{pmatrix}.\]

Note that the system $\mathcal A \bo U = \bo R$ discretises simultaneously the time and space aspects of the equation. This allows us to treat the periodicity in time in a straightforward way, without resorting to iterative approximations. Up to a constant factor, the discrete version of \eqref{Eq:PvHamiltonian} is
\[ \max_{c} \|\bo U\|_{\ell^p}\]
for some $p \geq 2$. In order to solve \eqref{Eq:PvHamiltonian}, we need to discretise the adjoint, which corresponds to another parabolic equation. To avoid complications we differentiate the discrete system directly.

It is standard to compute derivatives for $J(\bo U) = \sum_{i,j} j(U_{i,j})$ under the constraint \eqref{eq:discrete-main3}. Indeed, if $h \in \Bbb{R}^N$ is a perturbation for $c$ then 
\[ J'(\bo U(c))(h)=\sum_{i,j}j'(U_{i,j})U_{i,j}(c)'(h),\]
where $\bo U'(h)$ satisfies
\[ \mathcal A \bo U'(h) = \begin{pmatrix}
	h_0 V \\ h_1 V \\\vdots \\ h_{N-1} V 
	\end{pmatrix}.\]
We consider the solution $\bo P$ to the adjoint system
 \[ \mathcal A^T \bo P = 
     j'(\bo U)
,\]
 which implies that
 \[ J'(\bo U(c))(h) =\bo P\cdot  \begin{pmatrix}
	h_0 V \\ h_1 V \\\vdots \\ h_{N-1} V
	\end{pmatrix}.\]

This expression yields
 \begin{equation}\label{eq:deriv-c}
  \frac{\partial J(c)}{\partial c_i}=p_i \cdot V.
  \end{equation}

The two types of constraints considered for $c$ are introduced below. Each one of them is preserved during the gradient descent/ascent algorithm using projections, which will be detailed below.

\subsubsection{Discretisation of the classical constraints} We first study constraints that we call classical (see Remark \ref{Re:Classical}), that is, corresponding to  $a\leq c\leq b$, $\fint_0^T c(t)dt = c_0 \in [a,b]$. To construct the projection on this constraint, observe that the mapping 
 \[ \tau \mapsto \fint_0^T \max\{a,\min\{c(t)+\tau,b\}\}dt \]
 is increasing in $\tau$. Therefore, using a dichotomy algorithm we find a threshold $\tau_0$ such that 
\[\fint_0^T \max\{a,\min\{c(t)+{\tau_0},b\}\}dt=c_0\]
where the equality is understood up to a fixed tolerance chosen beforehand. The projection operator is
\begin{equation}\label{eq:projection-algo}
    P(c) = \max\{a,\min\{c(t)+\tau_0,b\}\}.
\end{equation} 
While the projection operator \eqref{eq:projection-algo} is defined in the continuous context, the discrete version is obtained in a  straightforward way by replacing the integral with the average ($a$ and $b$ can also vary with $t$; in this case replace $a,b$ with $a(t)$ and $b(t)$ in the truncation operator above).

\bo{Numerical simulations.} We solve \eqref{Eq:PvHamiltonian} numerically with the aid of a gradient ascent algorithm with projection. The vector $c$ is updated using the usual formula $c^{k+1}=P(c^k+\gamma_k\nabla J(c^k))$ ($P$ is the projection operator defined in \eqref{eq:projection-algo}) where $\gamma_k>0$ is an ascent step chosen such that the objective function increases: $J(c^{k+1})>J(c^k)$. The step $\gamma_k$ is updated at every iteration: if the objective function increases, the step is slightly increased; if the objective function does not increase, the iteration is repeated with a smaller step. Simulations are ran for various bounds and average constraints for $c$. The variable $c$ is initialised randomly, then projected onto the constraints using \eqref{eq:projection-algo}. In the numerical computations we choose $V(x)=\cos x$. 
Interestingly enough, at a numerical level and for such constraints, we do not observe the symmetry breaking phenomenon of Theorem \ref{Th:SymmetryBreaking}: $c$ is equal to its maximal value on a connected interval and is equal to its minimal value on the complement. Simulations are performed using $100$ discretisation points for the spatial variable and $200$ discretisation points for the time variable. The final time is $T=2\pi$ and the periodic $x$ variable belongs to $[-\pi,\pi]$. Numerical simulations generally converge in under $100$ iterations for the parameters recalled above. Examples are shown in Figure \ref{fig:direct-classical}.

\begin{figure}
    \centering
    \begin{tabular}{cc}
           \includegraphics[width=0.45\linewidth]{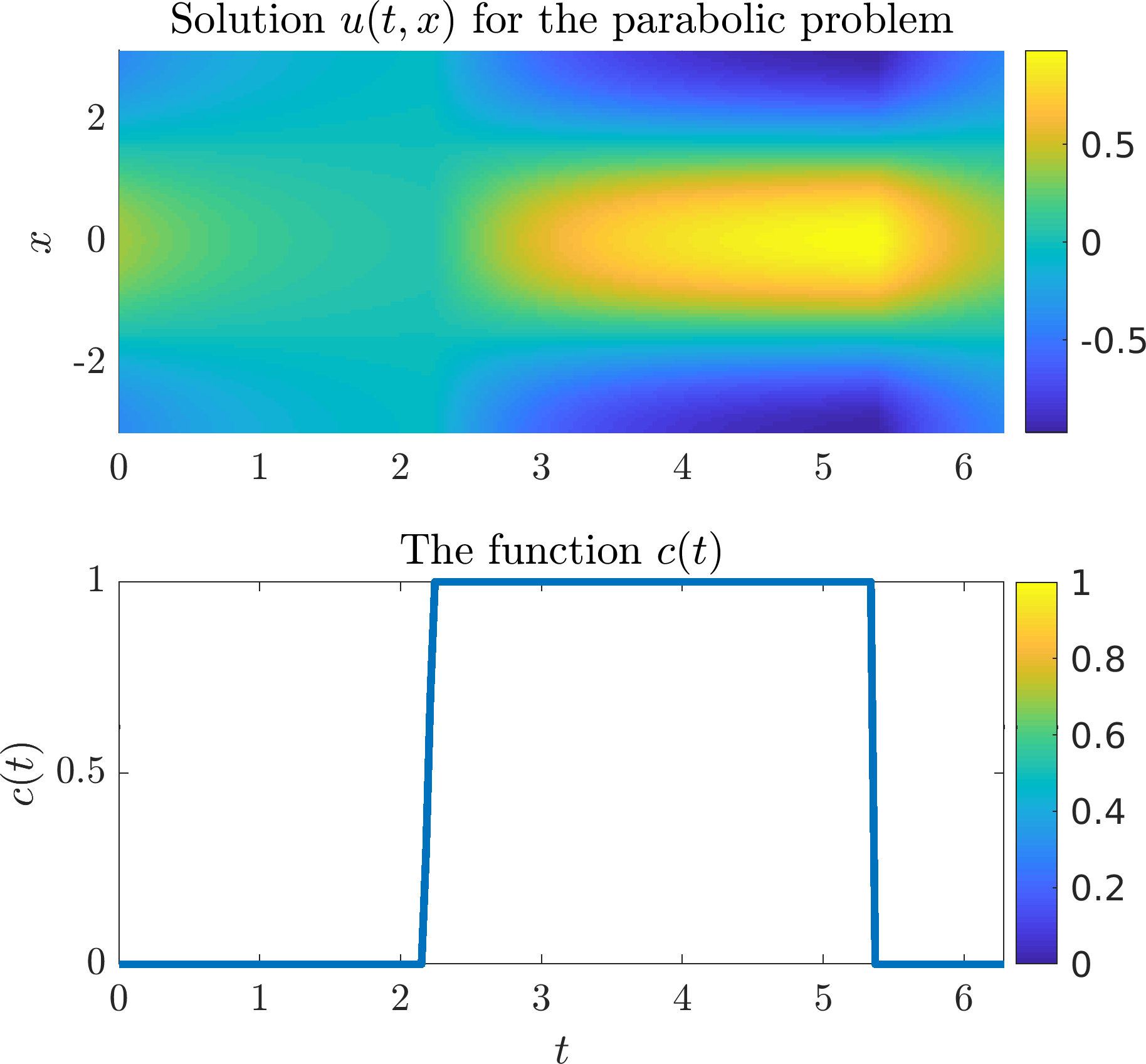}
  &     \includegraphics[width=0.45\linewidth]{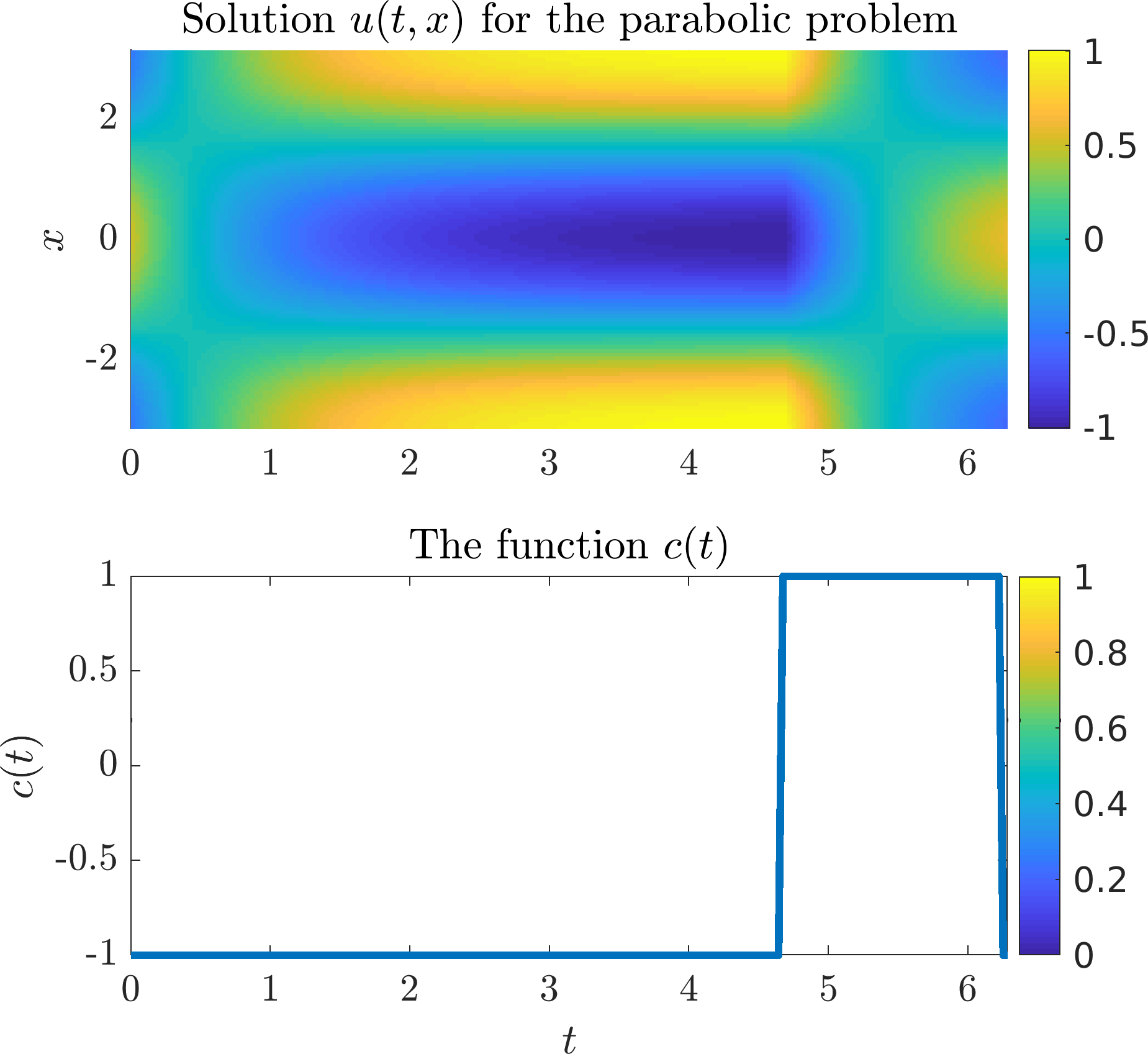}
 \\
     $0 \leq c\leq 1$, $\fint c = 0.5$, $p=10$    & 
     $-1 \leq c\leq 1$, $\fint c = -0.5$, $p=2$ 
    \end{tabular}
    \caption{Numerical solutions for problem \eqref{Eq:PvHamiltonian} for various parameters, using bound and average constraints on the function $c$.}
    \label{fig:direct-classical}
\end{figure}

\subsubsection{Discretisation of rearrangement constraints}
\label{sec:rearrangement}
In this case we handle the case of functions $c$ having a fixed rearrangement $c^{\#} = \bar c$. Let $c_{\min}, c_{\max}$ be the minimum and maximum values of $c$ on $[0,T]$. Consider the domain $[0,T] \times [c_{\min},c_{\max}]$ divided into the discrete grid given by $t_0,...,t_N$, the time discretisation instances and $c_{\min}=v_0,...,v_K=c_{\max}$, an equi-distributed discretisation of the image space of $c$ on $[0,T]$. We set $\Delta c:=(c_{max}-c_{min})/K$. Let $F_{i,j}$ be the area of the graph of $c$ contained in the cell $[t_j,t_{j+1}]\times[v_i,v_{i+1}]$ for $0 \leq i \leq K-1$, $0\leq j \leq N-1$. See Figure \ref{fig:rearrangement-cos} for an illustration containing the proportions $F_{i,j}$ for the cosine function and $K=N=10$.
\begin{figure}
    \centering
    \includegraphics[width=0.7\linewidth]{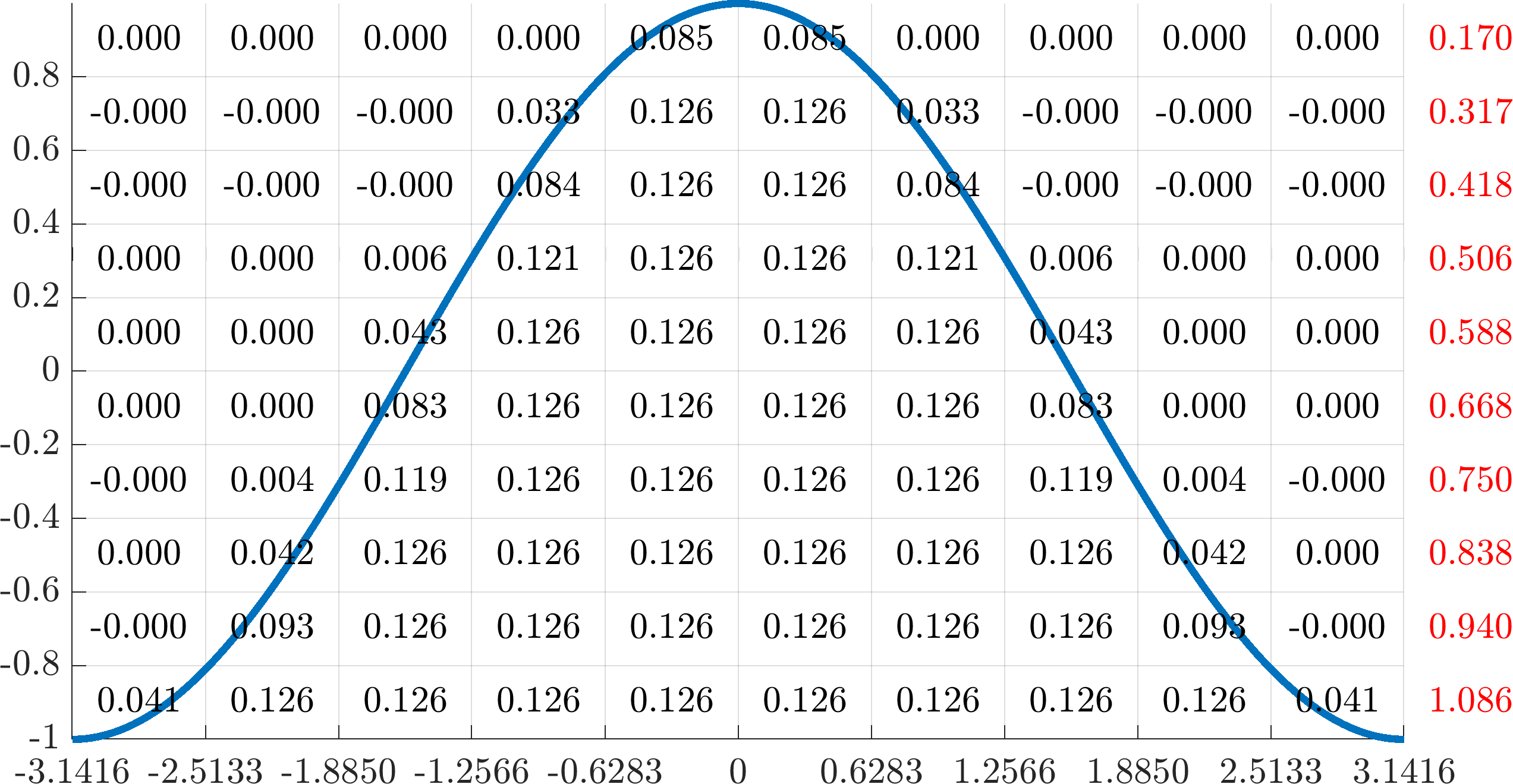}
    \caption{The region $[-\pi,\pi]\times [-1,1]$ containing the graph of the cosine function is divided into $N\times K$ rectangles. The covered area in each small rectangle is indicated in the figure. The area of a small rectangle is $(\Delta t)\times (\Delta c) \approx 0.126$. { A function having the same discrete rearrangement will be characterised by a $K\times N$ matrix with entries decreasing on columns, having fixed sum on lines (rightmost column above).}}
    \label{fig:rearrangement-cos}
\end{figure}

The numerical discretisation of the rearrangement constraint we propose is inspired from two natural observations.\begin{itemize}
    \item If $c$  is a function then $F_{i,j}\geq F_{i+1,j}$: the "quantity" of $c$ contained in a given interval, on a given slice is larger than what lies in a superior slice. 
    \item If $c$ has a prescribed rearrangement then $\sum_{j=0}^{N-1}F_{i,j}$ is fixed. This follows from the Cavalieri principle.\end{itemize}

We propose the following discrete rearrangement constraint. If $\overline c$ is a symmetric decreasing function with respect to $T/2$ then for any $i\in \{0,\dots,K-1\}$ consider 
\[q_i = \left|\{({t},y): {t} \in [0,T], y \in [v_i,v_{i+1}], y \leq c(t)\}\right|.\]
The sequence $q_i$ is decreasing.

Consider a matrix $F_{i,j}$ of size $(K\times N)$ and entries verifying for all $0\leq j \leq N-1$, $0 \leq i \leq K-1$ (extended by periodicity):
\[ F_{i,j} \geq F_{i+1,j}, \quad \sum_{j=0}^{N-1} F_{i,j}=q_i.\]
We reconstruct the discrete function $c$ by summing on the columns of $F$:
\begin{equation}\label{eq:sum-F} c_j = c_{\min}+\frac{1}{\Delta x}\sum_{i=0}^{K-1}F_{i,j}.
\end{equation}
In Figure \ref{fig:rearrangement-arb} an example of a different matrix $F$ corresponding to a discrete rearrangement of the cosine function is given. Together with the values, the cumulative sum on columns computed with \eqref{eq:sum-F} is also illustrated. 

\begin{figure}
    \centering
    \includegraphics[width=0.7\linewidth]{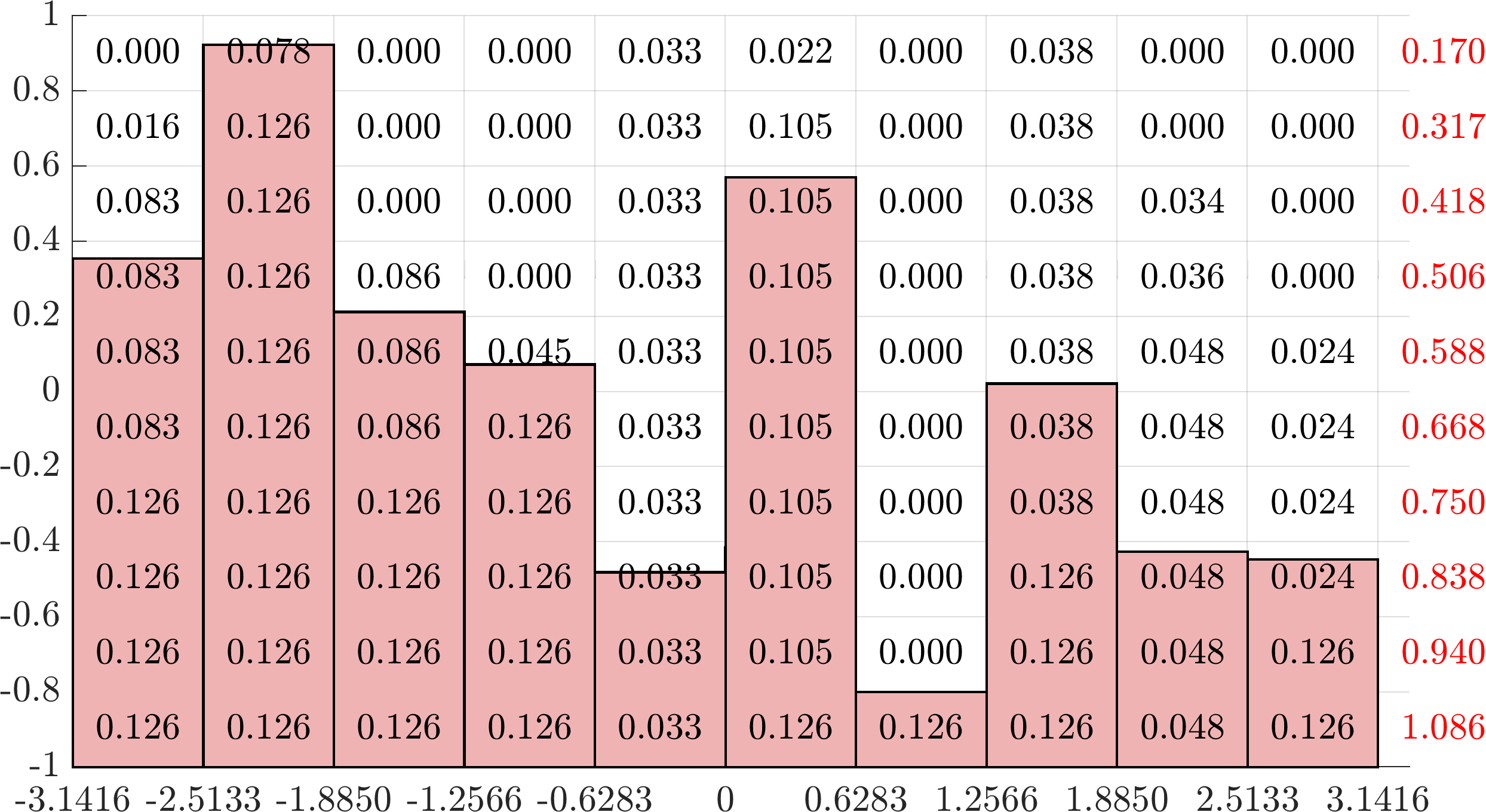}
    \caption{An example of a discrete function defined on the $N\times K$ rectangular grid having the required properties to be a discrete rearrangement of the cosine function: sum of values of lines equals to the same quantity for the cosine, values are decreasing on vertical columns. The area of a small rectangle is $(\Delta t)\times (\Delta c) \approx 0.126$.}
    \label{fig:rearrangement-arb}
\end{figure}

The cumulative sum is not the most representative illustration, since one discrete interval $[t_i,t_{i+1}]$ may not be fully covered at some of the intermediary levels. Another way of interpreting the discrete rearrangement proposed is shown in Figure \ref{fig:rearrangement-rectangles} where in each cell a rectangle corresponding to the fraction covered by the discrete function in the current cell is shown. Of course, when $N$ and $K$ are large enough, the description of the rearrangement becomes more and more precise. In the numerical simulations we use $N\geq 100$ and $K=100$ vertical slices to characterize the discrete rearrangement. Note that in order to characterize a one dimensional function with prescribed rearrangement, a two dimensional array is required. 

Similar to our previous computations, we obtain 
\[ \frac{\partial J}{\partial F_{i,j}} = \frac{\partial J}{\partial c_j} \frac{\partial c_j}{\partial F_{i,j}} = \frac{1}{\Delta t} p_i\cdot V.  \]

\begin{figure}
    \centering
    \includegraphics[width=0.7\linewidth]{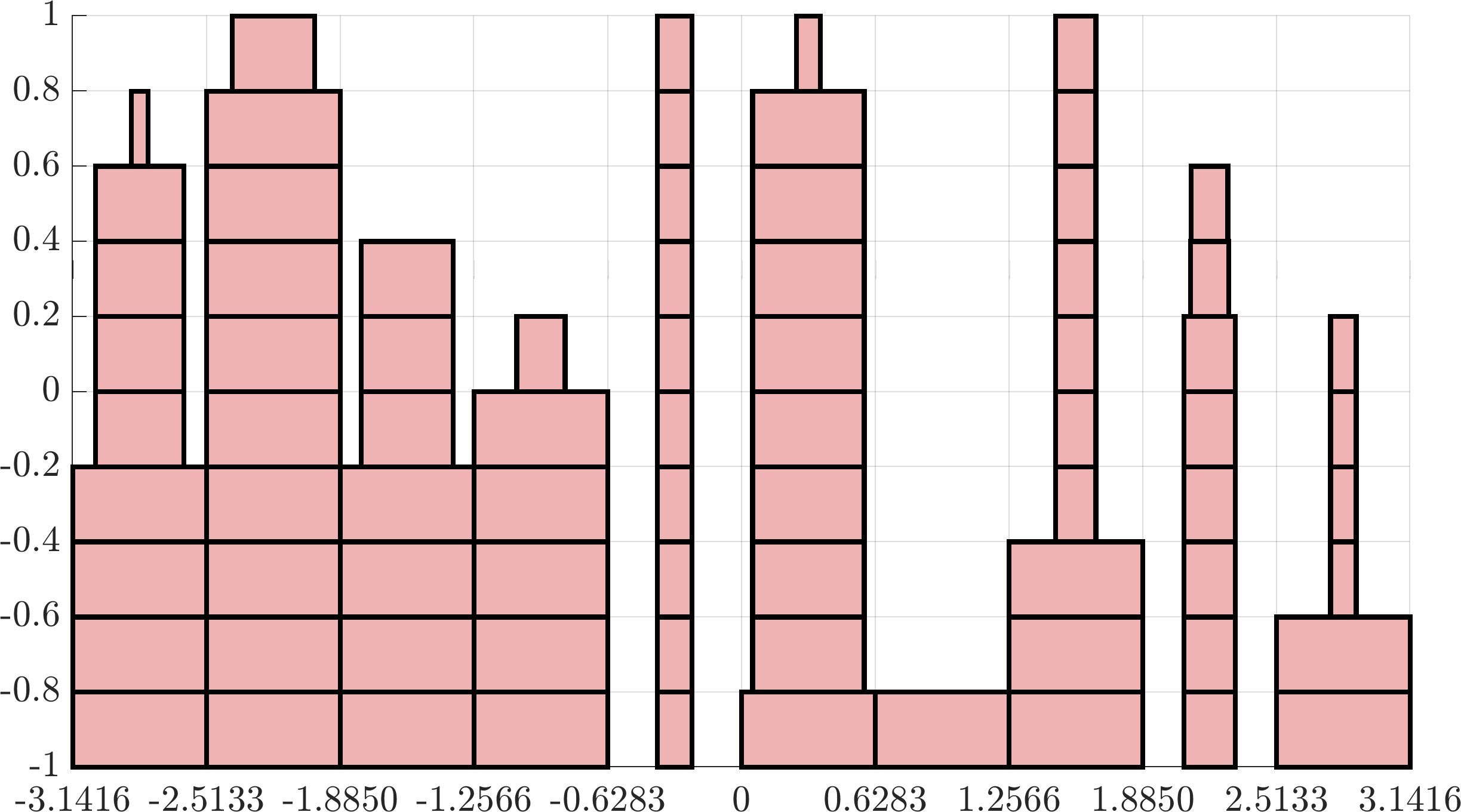}
    \caption{An alternative illustration for the discrete rearrangement of the cosine illustrated in Figure \ref{fig:rearrangement-arb}. In each one of the $N\times K$ cells a rectangle is plotted having area proportional to the area covered by the rearrangement in that cell.}
    \label{fig:rearrangement-rectangles}
\end{figure}

This constraint is also preserved in a gradient algorithm using projections. Indeed, note that by construction $F_{i,j}$ is an area proportion in a rectangle of area $(\Delta t)\times (\Delta c)$, where $\Delta c = \frac{c_{\max}-c_{\min}}{K}$. Therefore $0 \leq F_{i,j}\leq (\Delta t)\times (\Delta c)$. This leads to the following iterative algorithm:

\begin{itemize}
    \item \bo{First slice:} only bound and average constraints. Find $F_{0,j}$, $0 \leq j \leq N-1$ such that
    \[ 0 \leq F_{0,j} \leq (\Delta t)\times (\Delta c), \sum_{j=0}^{N-1} F_{i,j}=q_0.\]
    To achieve this, simply apply the projection algorithm stated in \eqref{eq:projection-algo}.
    \item \bo{Subsequent slices}: fixed average, smaller than previous slice. Given $i\geq 1$, find $F_{i,j}$, $0 \leq j \leq N-1$ such that
    \[ 0 \leq F_{i,j} \leq F_{i-1,j}, \sum_{j=0}^{N-1} F_{i,j}=q_i.\]
    To achieve this, apply the projection algorithm stated in \eqref{eq:projection-algo}, with variable bounds.
    \item Recover the discrete function $c$ using \eqref{eq:sum-F}.
\end{itemize}

\bo{Numerical Simulations.} Like in the previous case, we take $T=2\pi$ and $V(x) = \cos(x)$ for $x \in [-\pi,\pi]$. Assume that $c:[0,T] \to [-1,1]$ has the same symmetric rearrangement as $\cos(t)$. The rearrangement constraint is implemented using $K=100$ vertical slices, using the same notations indicated above. Two simulations are shown where \eqref{Eq:PvHamiltonian} is maximised for $p=2$ and $p=6$. In the first case, for $p=2$ the optimal $c$ is identically equal to $\cos$. For $p=6$ the symmetry is lost, as indicated by the theoretical results of Theorem \ref{Th:SymmetryBreaking}. Results are illustrated in Figure \ref{fig:rearrangement-classical}.

\begin{figure}
    \centering
    \begin{tabular}{cc}
         \includegraphics[width=0.49\linewidth]{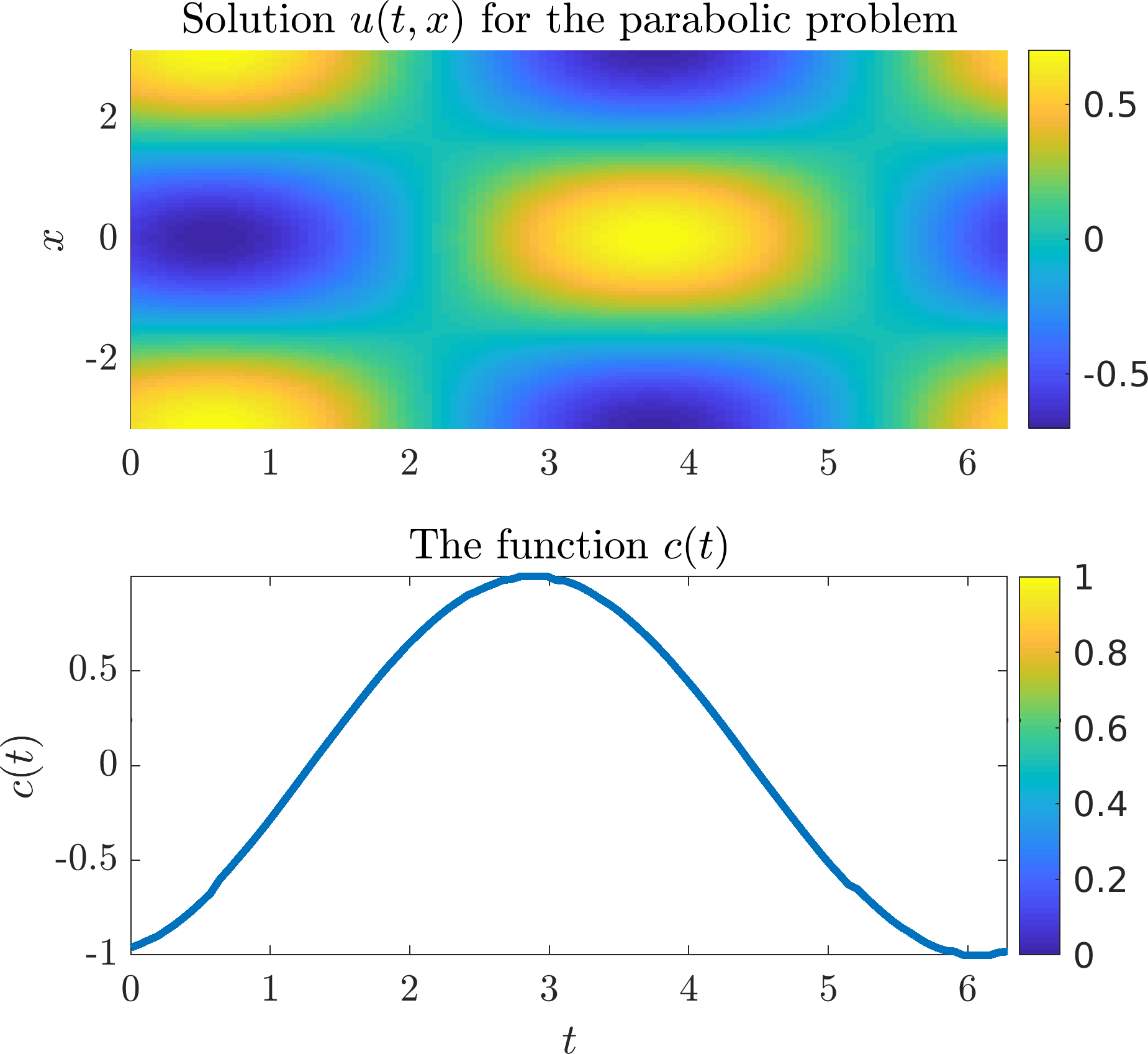}
    & \includegraphics[width=0.49\linewidth]{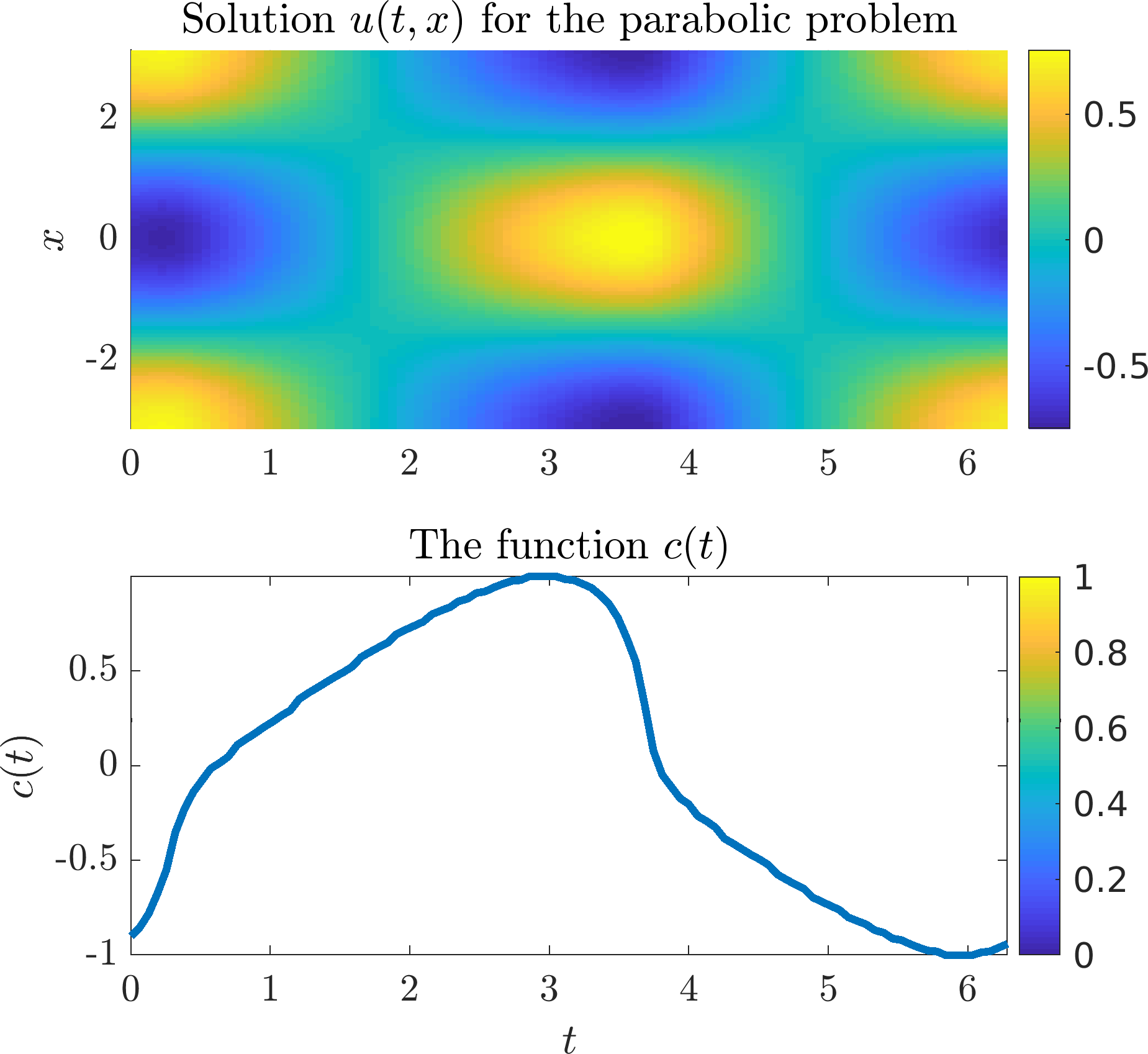} \\
     $p=2$    &  $p=6$
    \end{tabular}
    \caption{Numerical optimisation for the functional \eqref{Eq:PvHamiltonian} for functions $c$ having a prescribed rearrangement. Numerical results show that there is a different behaviour of the numerical optimiser with respect to the parameter $p$. For $p=2$ the solution is symmetric, while for $p=6$ the symmetry is lost. This confirms the results of Theorem \ref{Th:SymmetryBreaking}.}
    \label{fig:rearrangement-classical}
\end{figure}

\subsection{The eigenvalue problem}The discretisation of the eigenvalue problem \eqref{Eq:Main1} follows the same lines as the one used for the direct problem. The resulting discrete problem has the form
\begin{equation}\label{eq:discrete-eig}
(\mathcal A -(\Delta t)\diag(m))\bo U = (\Delta t)\lambda(m) \bo U,
\end{equation}
where $\mathcal A$ is the same matrix as in \eqref{eq:discrete-main3}, \eqref{eq:matrix-A-block} and $\diag(m)$ denotes the diagonal matrix with entries given by $m_{i,j}\approx m(x_i,t_j)$. We are interested in the principal eigenvalue for problem \eqref{eq:discrete-eig} for which we have $\bo U>0$ pointwise. 
More precisely, the elements on the diagonal of $\diag(m)$ are, in order, given by the columns of the matrix $m_{i,j}$: $(m_{0,j})_{j=0}^{M-1}$, $(m_{1,j})_{j=0}^{M-1}$, ..., $(m_{N-1,j})_{j=0}^{M-1}$.

 We observe that adding a constant $\delta m$ to $m$ and setting $\tilde m=m+\delta m$ simply shifts the eigenvalue: $\lambda(\tilde m)=\lambda(m)-\delta m$. Since the matrix $\mathcal A$ already has negative eigenvalues, to guarantee that the eigenvalue of the smallest absolute value found numerically (using standard libraries like \texttt{eigs} in Matlab) has a constant sign eigenvector, we shift $m$ with a constant such that all its components are strictly positive and, change $\lambda(m)$ accordingly to find the discrete eigenvalue \eqref{eq:discrete-eig}. 
 
Our objective is to optimise the eigenvalue $\lambda(m)$ with respect to $m$. Like in the previous case, to avoid technical difficulties, we differentiate the discrete problem \eqref{eq:discrete-eig} in order to optimise $\lambda(m)$ numerically. Considering $h \in \Bbb{R}^{M\times N}$ a perturbation of $m$ and differentiating \eqref{eq:discrete-eig} gives
\begin{equation}\label{eq:discrete-eig-deriv}
(\mathcal A -(\Delta t)\diag(m))\bo U'-(\Delta t)\diag(h)\bo U = (\Delta t)\lambda'(m)(h) \bo U+(\Delta t)\lambda(m)\bo U'.
\end{equation}
The adjoint problem for \eqref{eq:discrete-eig} is the same eigenvalue problem, but for the transpose matrix (remember that $\mathcal A$ defined in \eqref{eq:matrix-A-block} is not symmetric)
\begin{equation*}\label{eq:discrete-eig-adj}
(\mathcal A^T -(\Delta t)\diag(m))\bo V =(\Delta t) \lambda(m) \bo V,
\end{equation*}
in other words, $\bo V$ is a left eigenvector for the same eigenvalue $\lambda(m)$, which also satisfies $\bo V>0$ pointwise. Multiplying \eqref{eq:discrete-eig-deriv} with $\bo V^T$ and simplifying leads to an explicit formula for the derivative of $\lambda(m)$
\begin{equation}\label{eq:deriv-lambda}
\lambda'(m)(h) = -\frac{1}{\bo V^T \bo U} \bo V^T(\diag(h) \bo U).
\end{equation}
Since $\bo U, \bo V>0$ the scalar product $\bo V^T\bo U$ is strictly positive. More precisely we find that
\[ \frac{\partial \lambda(m)}{\partial m_{i,j}} = -\frac{1}{\bo V^T\bo U}U_{i,j}V_{i,j}.\]

\bo{Numerical simulations.} \bo{(a) Separated time/space potentials:} We choose $T=2\pi$, $100$ discretisation points for the space variable and $200$ discretisation points for the time variable. For potentials of the form $c(t)m(x)$ we choose $m(x) = \cos(x)$ and $c$ verifying either classical constraints (bounds, fixed average) or rearrangement constraints. The eigenvalues are computed in Matlab using the \texttt{eigs} routine. We refer to Figure \ref{fig:rearrangement-1D}.
\begin{figure}
    \centering
    \begin{tabular}{cc}
           \includegraphics[width=0.48\linewidth]{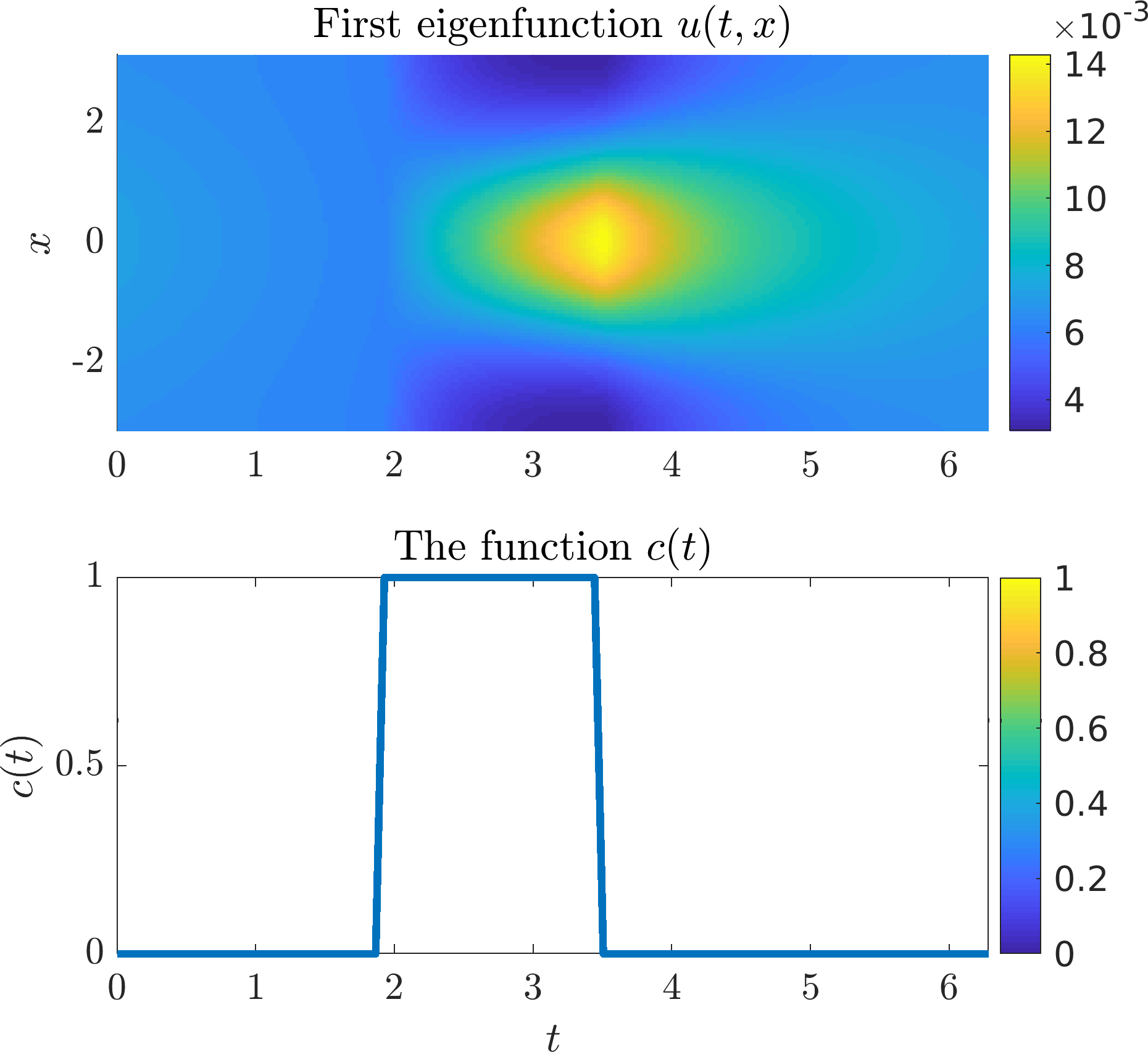}
  &     \includegraphics[width=0.48\linewidth]{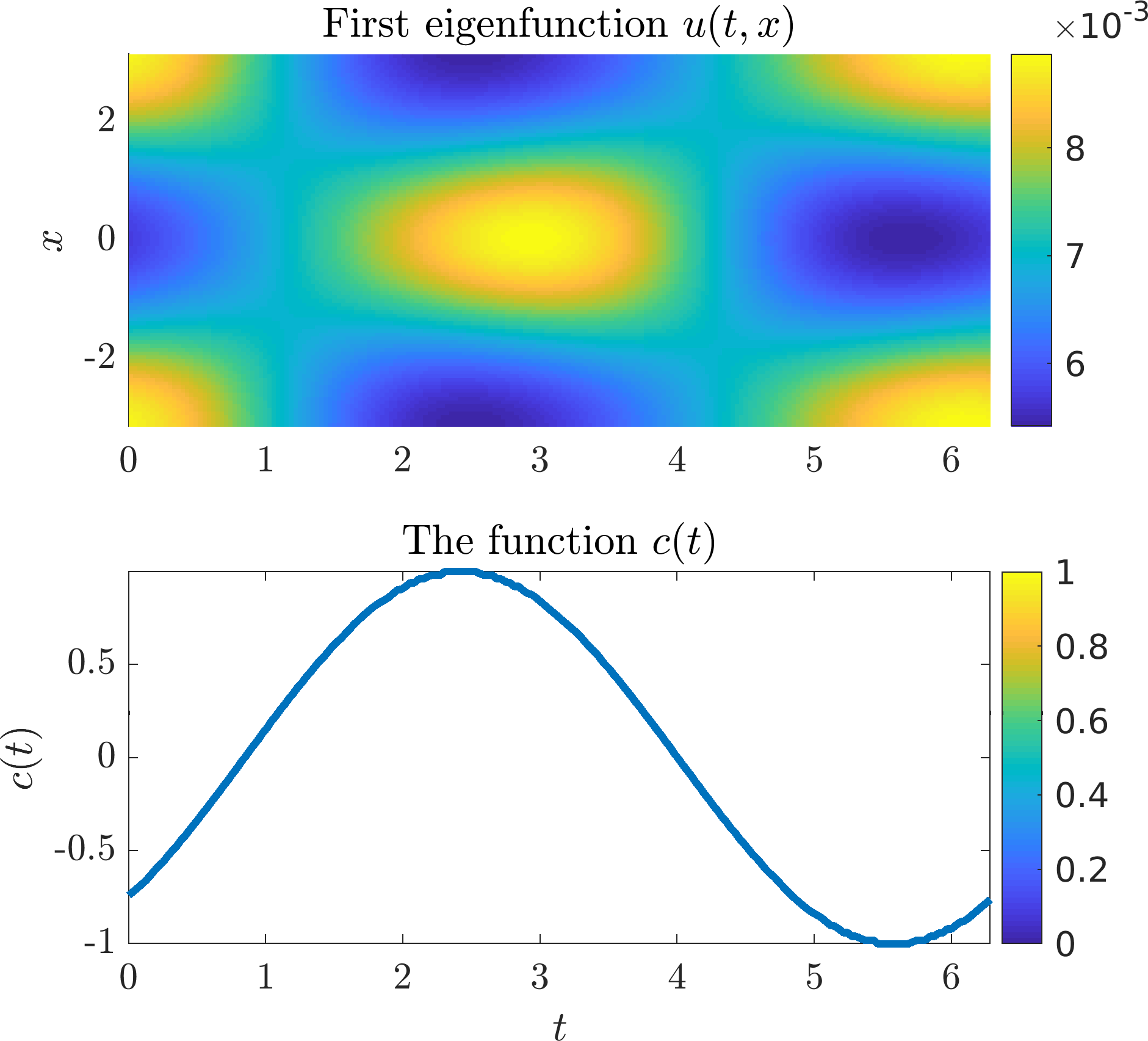}
 \\
     $0 \leq c\leq 1$, $\fint c = 0.5$    & 
     $c^{\#} = \cos(t)$ 
    \end{tabular}
    \caption{Numerical minimisationation of the principal eigenvalue in \eqref{Eq:Main1} with respect to potentials of the form $c(t)m(x)$ with $m(x)=\cos(x)$ and $c(t)$ verifying classical or rearrangement constraints.}
    \label{fig:rearrangement-1D}
\end{figure}
%
%

\bo{(b) Two dimensional potentials with rearrangement constraints.} The discrete framework presented in Section \ref{sec:rearrangement} for the rearrangement constraint can be extended \emph{ad literam} in higher dimensions. 

To parametrize potentials $m: [0,T] \times \Bbb T \to \Bbb{R}$ which have a fixed rearrangement consider the following:
\begin{itemize}
    \item Divide the region $[0,T]\times \Bbb{T}\times [\min m,\max m]$ into $N \times M\times K$ boxes labeled $B_{i,j,k}$ using equidistributed points along each dimension. Denote by $h_0,...,h_K$ a discretisation of $[\min m,\max m]$
    \item Consider $F_{i,j,k}$ the proportion of the volume of the graph of the potential $m$ contained in $B_{i,j,k}$.
    \item Take $F_{i,j,k}$ as optimisation variables assuming that $k \mapsto F_{i,j,k}$ is decreasing and $\sum_{i,j} F_{i,j,k}=q_k$ is fixed and equal to the integral of $m$ on $\{(t,x,z) : z \in [h_i,h_{i+1}]\}$.
\end{itemize}

Given $F_{i,j,k}$, satisfying the constraints above, re-construct $m$ using the formula
\begin{equation}\label{eq:sum-F2D} m_{i,j} = m_{\min}+\frac{1}{\Delta t\cdot \Delta x}\sum_{k=0}^{K-1}F_{i,j,k}.
\end{equation}

The eigenvalue can become degenerate if one is not careful with the time scaling and diffusivity of the equation, in the sense that the gradient will be extremely low. To overcome this numerical hurdle, we consider the eigenvalues of the operator $\alpha \partial_t -\beta \Delta - m I$. This amounts to a scaling of the time-space domain. Simulations presented below support the conjecture that the optimal potential under a rearrangement constraint is symmetric in time and in space. Simulations are made for $(t,x) \in [0,T]\times [-\pi,\pi]$. The optimisation algorithm is a gradient descent algorithm with projection on the discrete rearrangement constraint. The initialisation is chosen randomly. Precise details are given for each computation. 

\bo{Computation 1.} Parameters: $\alpha=0.1$, $\beta = 10$, $m^\#(t,x) = \cos(x)\cos(2\pi t)$ on $[0,1]\times [-\pi,\pi]$. The discretisation uses $201$ time steps, $201$ space steps and $200$ slices for the rearrangement constraint. The initialisation is chosen at random and projected onto the rearrangement constraint. The gradient descent algorithm is ran for $1000$ iterations. 

\begin{figure}
    \centering
    \includegraphics[width=0.48\linewidth]{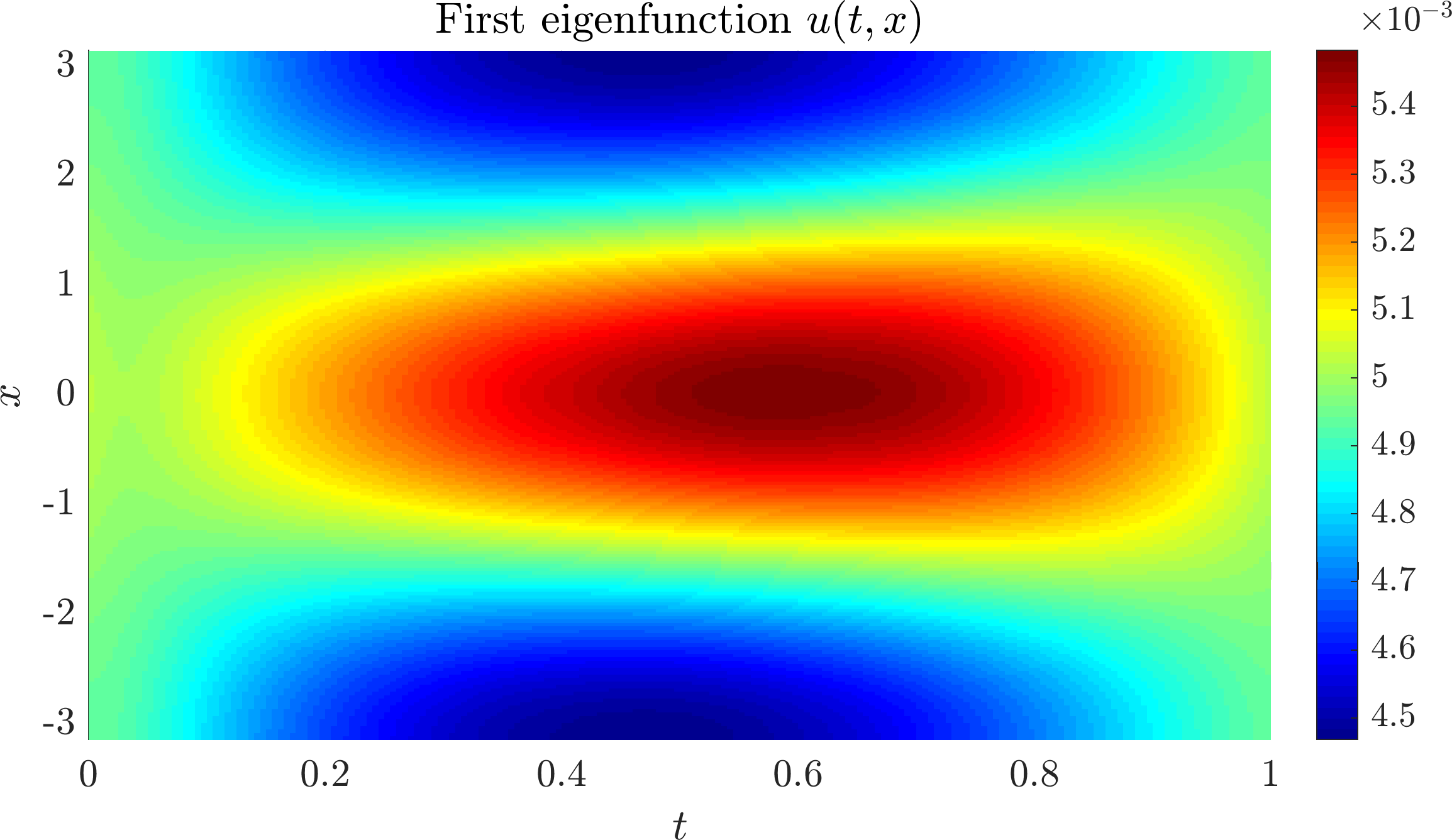}
    \includegraphics[width=0.48\linewidth]{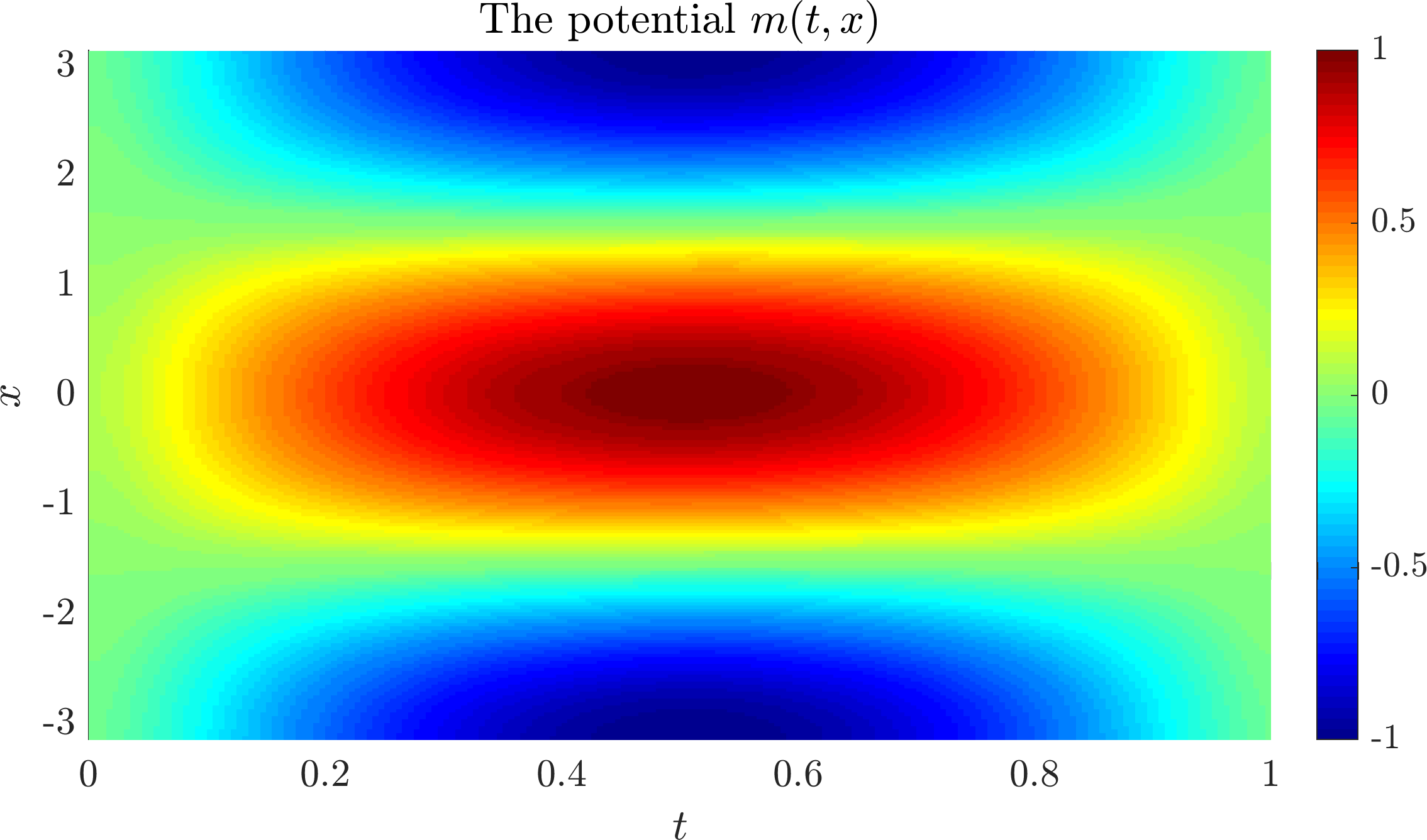}

     \includegraphics[width=0.48\linewidth]{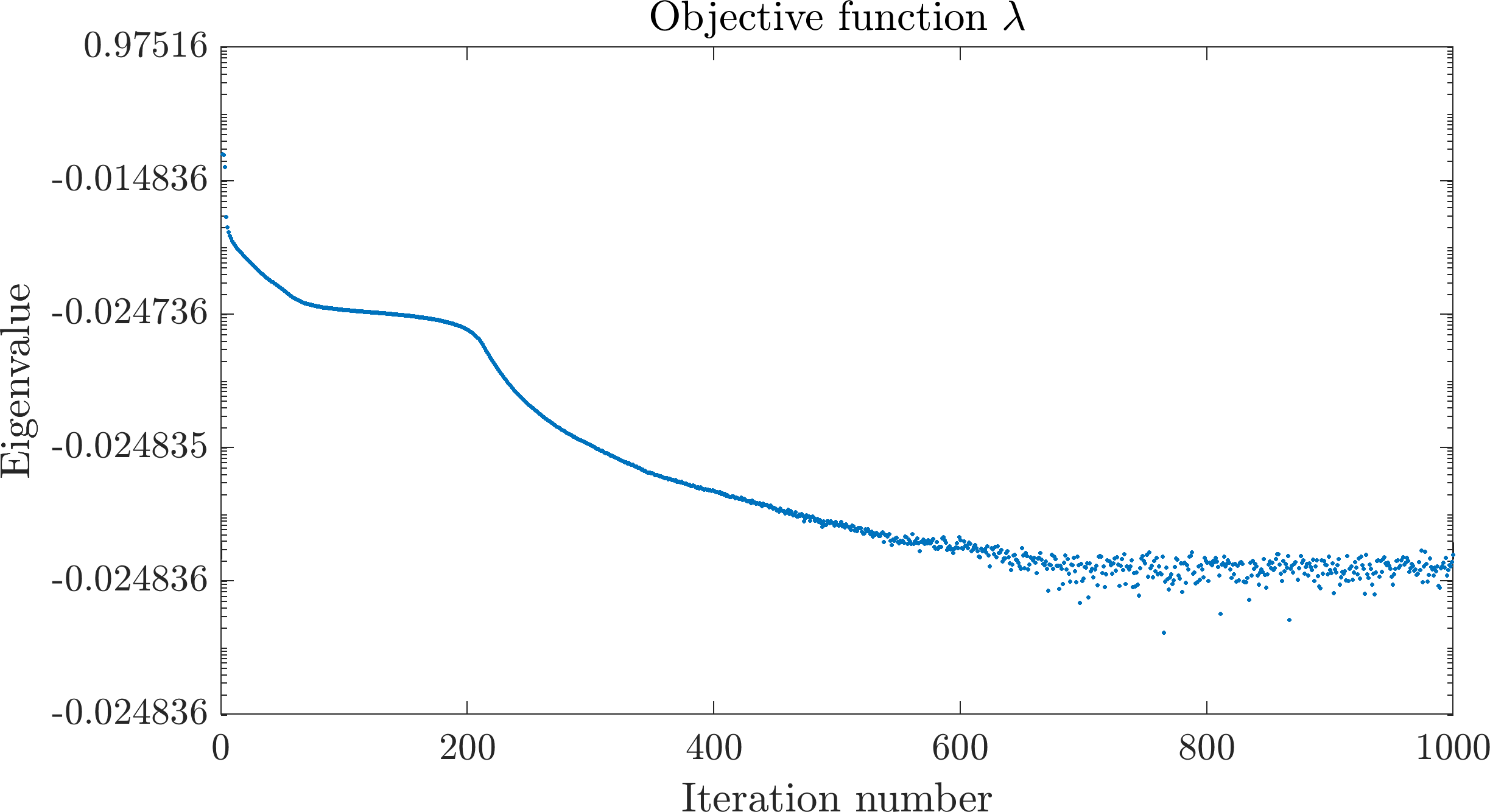}
     \includegraphics[width=0.48\linewidth]{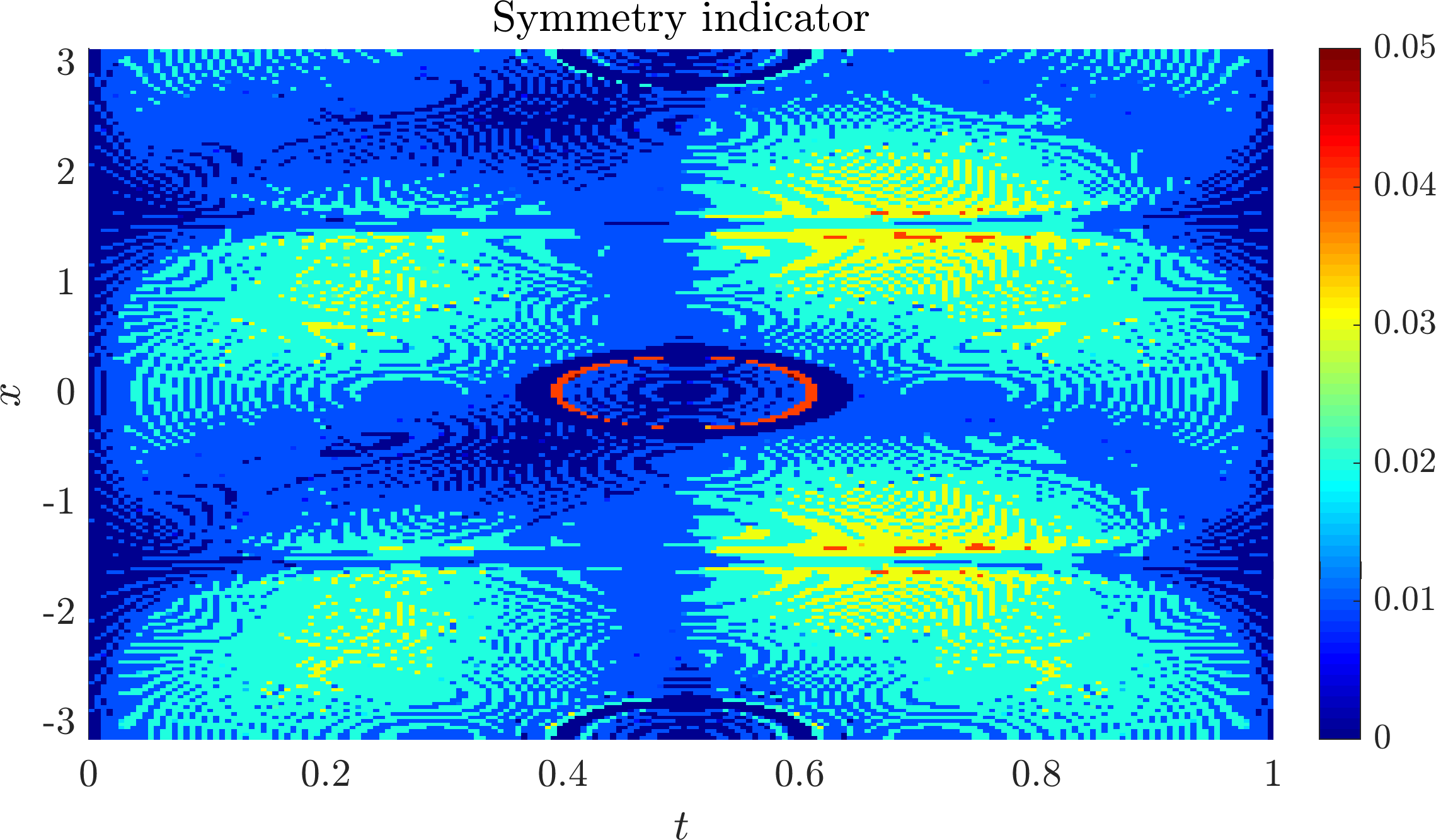}
    \caption{Rearrangement constraint $m^\#=\cos(x)\cos(2\pi t)$, $\alpha=0.1$, $\beta = 10$. The eigenfunction, optimal potential and objective function are plotted. The fourth image plots $\max\{|m(t,x)-m(T/2-t,x)|, |m(t,x)-m(t,-x)|\}$: small values indicate closeness to a symmetric potential.}
    \label{fig:rearrangement2D_1}
\end{figure}

Results are illustrated in Figure \ref{fig:rearrangement2D_1} where the optimal potential, corresponding eigenvalue and the evolution of the eigenvalue are shown. To investigate the symmetry of the numerical minimiser we shift the discrete solution such that the maximal value is at the center of the grid. The optimal potential is compared with the symmetrisation with respect to the vertical and horizontal direction. The maximal difference between $m$ and its symmetrisation is plotted. The values obtained are small enough to conjecture that the minimiser $m(t,x)$ is symmetric decreasing in time and space. The precise formula is: $\max\{|m(t,x)-m(T/2-t,x)|, |m(t,x)-m(t,-x)|\}$.

\bo{Computation 2.} In a second computation we consider a rearrangement constraint where the graph of $m^\#$ is a cone: functions taking values in $[0,1]$ with slice areas equal to $(1-h)^2 \pi \delta h$, where $\delta h$ is $1/K$ and $K=200$ is the number of horizontal slices used in the rearrangement constraint. The scaling parameters are $\alpha=0.1$, $\beta=1$ and the number of iterations is fixed to $250$. The numerical result obtained is shown in Figure \ref{fig:rearrangement2D_2} and is close to being symmetric decreasing in time and space.

\begin{figure}
    \centering
    \includegraphics[width=0.48\linewidth]{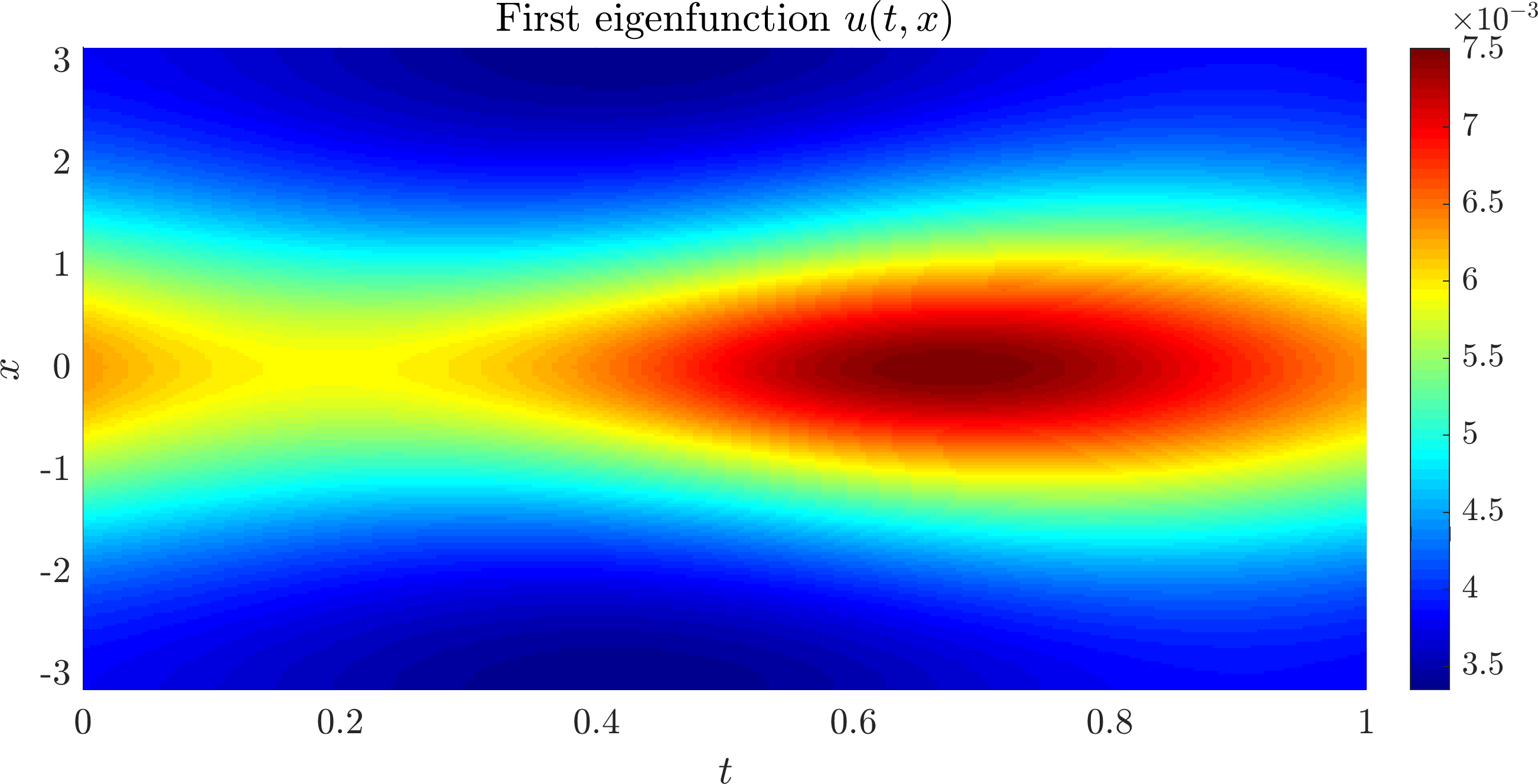}
    \includegraphics[width=0.48\linewidth]{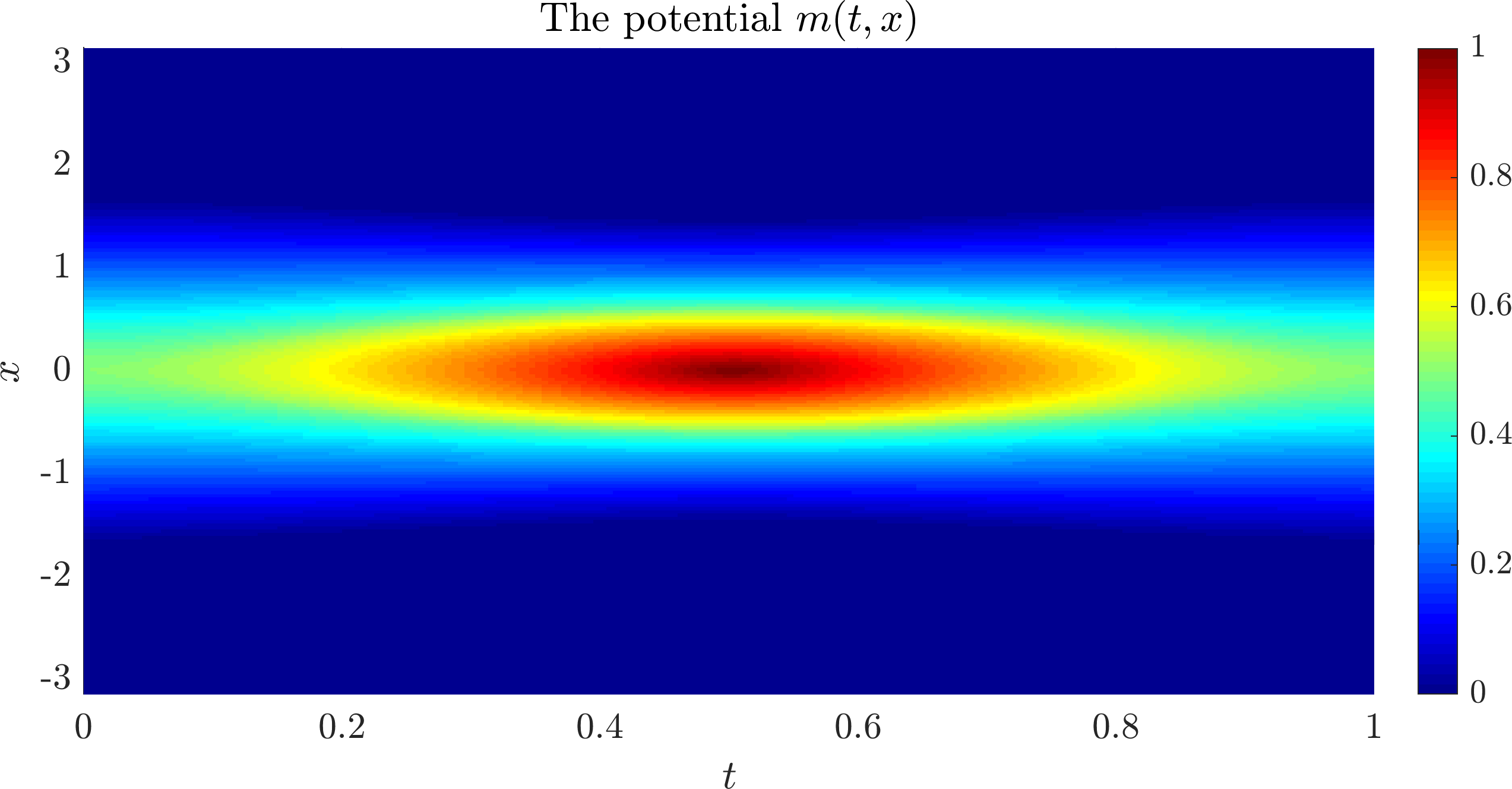}

     \includegraphics[width=0.48\linewidth]{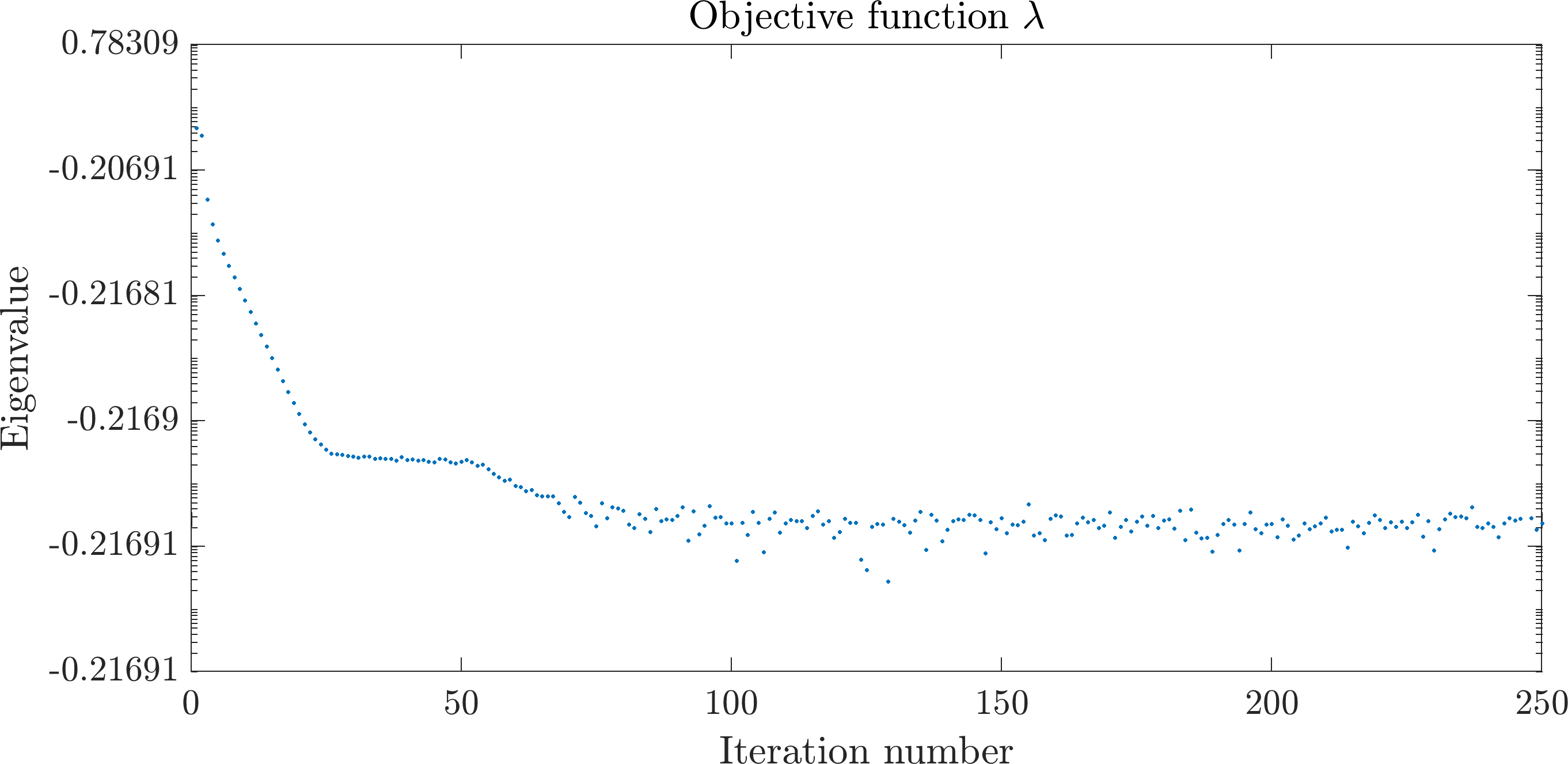}
     \includegraphics[width=0.48\linewidth]{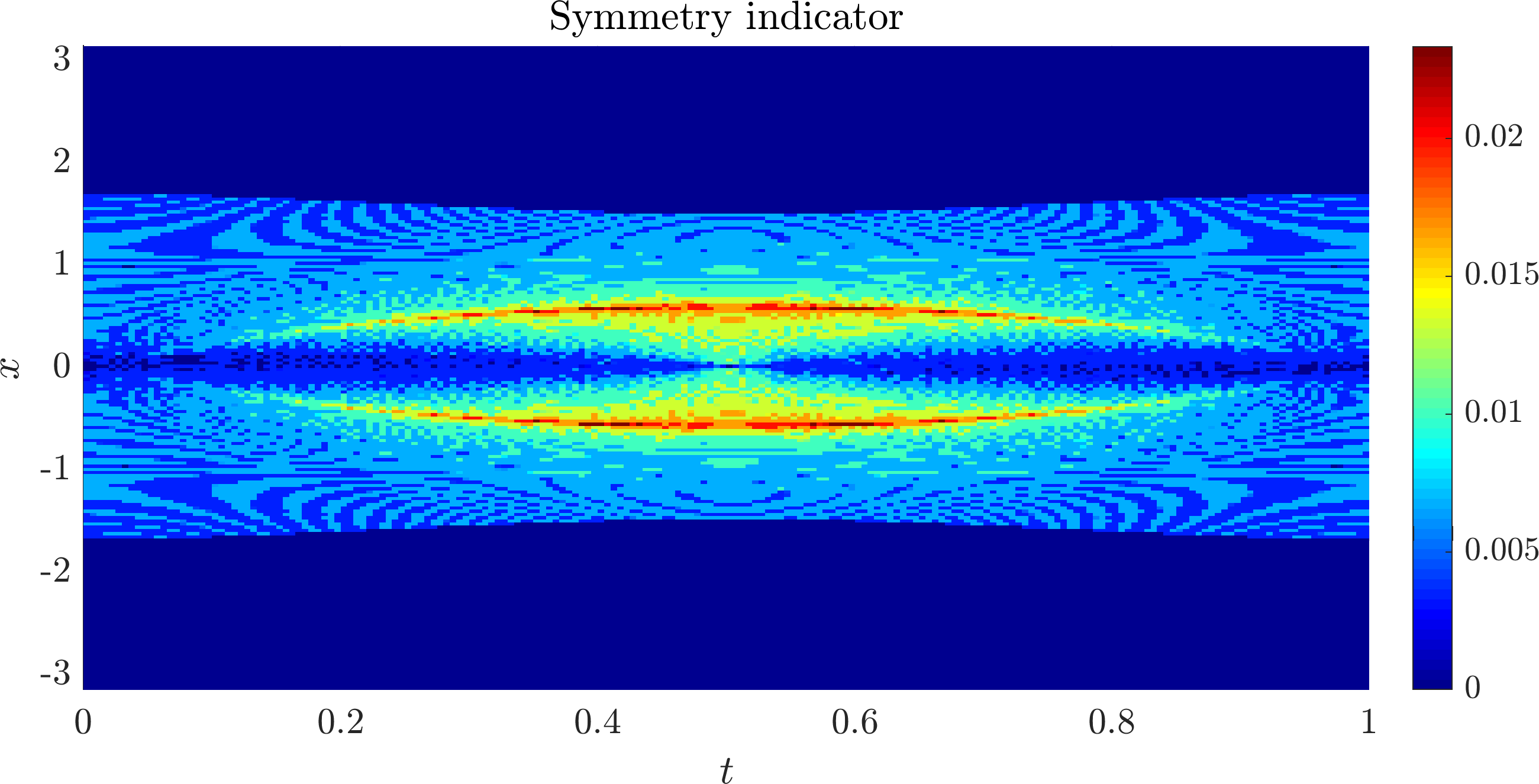}
    \caption{Rearrangement constraint: $m^\#$ is a cone, $\alpha=0.1$, $\beta = 1$. The eigenfunction, optimal potential and objective function are plotted. The fourth image plots $\max\{|m(t,x)-m(T/2-t,x)|, |m(t,x)-m(t,-x)|\}$: small values indicate closeness to a symmetric potential.}
    \label{fig:rearrangement2D_2}
\end{figure}

\section{Conclusion}
In this article, we were able to derive symmetry and monotonicity properties of optimisers of space-time periodic eigenvalues in various particular frameworks. Based on our results and on our numerical simulations, we conjecture that the optimiser should always be symmetric decreasing in time and space, but the tools required to prove it seem out of reach at the moment.

\bibliographystyle{abbrv}
\bibliography{BiblioHJB}

\end{document}